\renewcommand{\email}[2][]{%
  \ifx\emails\@empty\relax\else{\g@addto@macro\emails{,\space}}\fi%
  \@ifnotempty{#1}{\g@addto@macro\emails{\textrm{(#1)}\space}}%
  \g@addto@macro\emails{#2}%
}
\numberwithin{equation}{section}
\def\curl{{\rm curl}}
\DeclareMathOperator{\dv}{div}
\newcommand{\beq}{\begin{equation}}
\newcommand{\eeq}{\end{equation}}
\newcommand{\ben}{\begin{eqnarray}}
\newcommand{\een}{\end{eqnarray}}
\newcommand{\beno}{\begin{eqnarray*}}
\newcommand{\eeno}{\end{eqnarray*}}
\newcommand{\pa}{\partial}
\newcommand{\R}{\mathcal{R}}
\newcommand{\psca}[1]{\left\langle#1\right\rangle}
\newtheorem{theorem}{\textbf Theorem}[section]
\newtheorem{lemma}{\textbf Lemma}[section]
\newtheorem{rem}{\textbf Remark}[section]
\newtheorem{prop}{\textbf Proposition}[section]
\newcounter{remark}
\par \stepcounter{remark} {\it Remark
\arabic{section}.\arabic{remark}.}~}{\rm \end Proof\par}
\title{On the role of the displacement current and the Cattaneo's law on boundary layers of plasma}
\author[F. De Anna, N. AARACH, M. Paicu, N. Zhu]{Nacer Aarach$^{1}$,\,Francesco De Anna$\,^2$, Marius Paicu$^3$ and Ning Zhu$^{3}$}
\address{$^{1,\,3}$ Bordeaux Institute of Mathematics, University of Bordeaux, France\\
$^{1}$ nacer.aarach@math.u-bordeaux.fr\\
$^{3}$  marius.paicu@math.u-bordeaux.fr}
\address{$\,^2$ Institute of Mathematics, University of Würzburg, Germany\\
   francesco.deanna@mathematik.uni-wuerzburg.de}
\address{$^4$ School of Mathematics, Shandong University, 
P.R. China\\
ning.zhu@sdu.edu.cn}   
\begin{document}

\begin{abstract}  

In the present paper, we aim to mathematically analyse the role of the displacement current and the Cattaneo's law on the boundary-layer theory of plasma, when the corresponding characteristic speed is relativistic. We restrict our analysis to two-dimensional flows and we study the asymptotic limit of the Navier-Stokes-Maxwell equations with Cattaneo's law near a bounding flat line, when the Hartmann, Reynolds and magnetic Reynolds numbers proportionally diverge to $\infty$. 

\noindent 
The goal of this paper is twofold. We first show that the extended version of the Navier-Stokes-Maxwell equations leads to a new family of boundary layers, which are hyperbolic both on the momentum equation and the Ampere's law.  
Secondly, we address the well-posedness of the derived equations and show the existence of global-in-time analytic solutions for small initial data. 

\noindent 
Our modelling highlights which conditions on the dimensionless parameters allow to interpret the proposed system as boundary layers with thickness typical of Prandtl or Hartmann. Furthermore, our development shows that the conditions related to Hartmann might be more physically acceptable. 

\noindent 
Finally, our analysis suggests that the Cattaneo's law and the displacement current might indeed stabilise the derived system in terms of existence of global-in-time analytic solutions. 

\end{abstract} 

\maketitle

\noindent 
\textbf{2010 Mathematics Subject Classification:} 35Q35, 35Q61, 35Q85, 76W05, 76F40.

\noindent 
\textbf{Keywords:} Boundary layers, Navier-Stokes-Maxwell equations, Cattaneo's law, displacement current, analytic solutions.

\section{Introduction}

\noindent 
The mathematical study of electrically conducting fluids and hot plasma has received for many years numerous investigations. The understanding of the underlying equations (MHD equations or Navier-Stokes-Maxwell equations) has significantly provided a fascinating number of implications, both on technological processes and physical experiments. Among the remarkable variety of applications, MHD flows are ubiquitous in contexts like astronomy (hydrodynamics of plasma in neutron stars and white dwarfs), nuclear fusion reactors (self-cooled liquid metal blankets) and metallurgic (liquid metal stirring). 

\noindent 
In this paper we are interested in deriving and analysing a family of partial differential equations that mathematically account for boundary layers of plasma and electrically conducting fluids, when the corresponding characteristic speed is of relativistic order. This specific hydrodynamics near a wall surface has been a topic of constant interest in astrophysics. For instance, these boundary layers are expected to be dominant sources of X-ray production in neutron stars \cite{Popham_2001},  gravitational radiation \cite{PhysRevD.64.044009} and magnetic reconnection \cite{Voros}.

\noindent 
The mathematical treatment of boundary layers in electrically conducting fluids has a long history, which dates back to the pioneering work of Hartmann \cite{hartmann1937theory}. Hartmann studied a duct flow of a viscous electrically-conducting fluid under the influence of a transverse magnetic field. Oriented at the right angle, the magnetic field produced additional viscosity, separating the channel into two main regions, boundary-layer region (Hartmann layers) and central core region. 

\noindent
Afterwards, many theoretical investigations and experiments have been developed around this theory, the most of them under the assumption that the electromagnetic variations of plasma are non-relativistic (i.e.~the characteristic speed of plasma has magnitude consistently lower than the speed of light). 
This hypothesis relaxes several terms of the Maxwell's equations, in particular, it neglects the so-called displacement current in the Ampere's law (a source of the magnetic field related to the ratio between the characteristic speed of plasma and the speed of light). 

\noindent 
The lack of the displacement current may or may not be satisfactory, depending on the modelling context. In neutron stars, for instance, strong time-dependent electric field could develop, when the plasma density falls below a critical value \cite{10.1093/mnras/staa774}. Thus, the associated displacement current makes up for the deficit of the plasma density and plays a major role in the evolution of the magnetic field.

\noindent
To the best of our knowledge, it remains still an open problem to mathematically understand boundary layers of plasma, whose magnetic field in the Navier-Stokes-Maxwell's equations is affected by the displacement current. This paper is therefore a first mathematical attempt to address this issue. In details, we derive and analyse the following system of PDEs (written in dimensionless form):
\begin{equation}
	\label{eq:main-system}
	\left\{\;
	\begin{aligned}
		& 
		\mathbb J
		\partial_{t}^2u + 
		\pa_tu+u\pa_x u+v\pa_yu -\pa_y^2u+\pa_xp= \mathbb{H}^2  \big(b_1  b_2 v  - u  b_2^2 - b_2e\big) 
		\hspace{0.4cm}(t,x,y)\in 
		&&  (0,T)\times \mathbb{R}\times  (0,1),\\
		& \pa_y p= \mathbb{H}^2  \big( b_1b_2 u - b_1^2 v + b_1 e \big) 
		&& (0,T)\times \mathbb{R}\times  (0,1),\\
		&  \frac{\kappa}{\rm Pr_m} \pa_{t }^2 b_1 + \pa_tb_1+u\pa_x b_1+v\pa_yb_1 -\frac{1}{\rm Pr_m} \pa_y^2b_1 = 
		 b_1 \partial_x u + 
            b_2 \partial_y u 
		&& (0,T)\times \mathbb{R}\times  (0,1) ,\\
		&  \frac{\kappa}{\rm Pr_m}  \pa_{t }^2 b_2 + \pa_tb_2+u\pa_x b_2+v\pa_yb_2 -
		 \frac{1}{\rm Pr_m}
		 \pa_y^2b_2 =
		   	b_1 \partial_x v + b_2 \partial_y v 
		&& (0,T)\times \mathbb{R}\times  (0,1),\\
		& \partial_t b_1 + \partial_y e = 0 \quad \text{and}\quad 
        \partial_t b_2 - \partial_x e  = 0  
        && (0,T)\times \mathbb{R}\times  (0,1),\\
		& \pa_xu+\pa_yv=0 \hspace{0.43cm}\text{and}\quad  \pa_xb_1+\pa_yb_2=0  
		&& (0,T)\times \mathbb{R}\times  (0,1),
	\end{aligned}
	\right.
\end{equation}
coupled with initial and boundary conditions
\begin{equation}\label{boundary-condition-our-main-eq}
    {\rm (IC)}
    \left\{
	\begin{aligned}
	    & (u,b_1,b_2)|_{t=0}=(\bar{u},\bar{b}_1,\bar{b}_2)  
		&& \mathbb{R}\times  (0,1),\\
		& (\partial_t u,\pa_tb_1, \pa_t b_2 )|_{t=0}=(\tilde{u},\tilde{b}_1,\tilde{b}_2)
		\hspace{-0.2cm}
		&&  \mathbb{R}\times  (0,1),
	\end{aligned}
	\right. 
	{\rm (BC)}
    \left\{
	\begin{aligned}
	    & (u,b_1,b_2,e)|_{y=1}=(0,0,0,0)  
		&& (0,T)\times \mathbb{R},\\
		& (u,b_1,b_2,e)|_{y=1}=(0, \mathbf b_1,\mathbf b_2, \mathbf e) 
		&& (0,T)\times \mathbb{R}.
	\end{aligned}
	\right.
\end{equation}

\subsection{Overview of System \eqref{eq:main-system}}$\,$

\noindent
For the sake of a clear presentation, we consider here a rather simple geometry, as well as simple boundary conditions. We assume indeed that the conducting fluid is restricted to the whole half space, in other words System  \eqref{eq:main-system} represents the behaviour of the fluid on a thin layer near a (flat) boundary line $(y = 0)$.
The velocity field $(u,v)^T$ satisfies no-slip boundary conditions, while the magnetic field $(b_1, b_2)^T$ and the electric-field intensity $e$ are assumed constant on the boundary 
(a scenario which is typical when the surrounding medium is an insulator).

\noindent 
All state variables $(u,v,b_1,b_2,e)$ in \eqref{eq:main-system} depend on time $t\in (0,T)$ and space $(x,y)\in \mathbb{R}\times (0,1)$. The vector fields $(u,v)^T \in \mathbb{R}^2$ and $(b_1,b_2)^T\in \mathbb{R}^2$ are divergence free and stand for the velocity and magnetic fields of plasma, respectively. The electric field assumes size $e\in \mathbb R$ and is perpendicular to the plane containing the plasma.  

\noindent
All constants $\mathbb{H}$, $\kappa$, ${\rm Pr_m}$ and $\mathbb J$ are positive and depend on standard dimensionless parameters of magnetohydrodynamics.
More precisely, $\mathbb{H}$ stands for the asymptotic of the ratio between the Hartmann number ${\rm Ha}>0$ and the Reynolds number $\rm Re$, as ${\rm Re}\to \infty$.  The magnetic Prandtl number ${\rm Pr_m}>0$ is assumed in this work constant and represents the ratio between the Reynolds number $\rm Re$ and the magnetic Reynolds number $\rm Re_m$. 
Hence viscous and magnetic diffusions are proportional, a regime typical of heavier white dwarfs, in which Reynolds and magnetic Reynolds numbers range between $10^{14}$ to $10^{15}$ (cf.~Section $2$ in \cite{Isern_2017}). Furthermore, the proportionality between ${\rm Ha}$ and ${\rm Re_m}$ reflects a threshold for the initiation of magnetic advection and subsequent reconnection (cf.~for instance Section $3.3$ in \cite{10.1111/j.1365-2966.2004.07898.x}, for the binary star AE Acquarii).

\subsection{Novelties of the model}$\,$

\noindent
Although System \eqref{eq:main-system} differs intrinsically from previous models (such as the Prandtl-MHD equations, cf.~\eqref{eq:MHD-Prandtl-literature}), the major novelties reside in particular within the terms ${\kappa}/{\rm Pr_m} \partial_{tt}^2 b_1 $ and ${\kappa}/{\rm Pr_m}\partial_{tt}^2 b_2$ for the equations of $b_1$ and $b_2$ (due to the displacement current \cite{MR3490904}), as well as within the term $\mathbb J 	\partial_{t}^2u$ in the equation for $u$. 
The role of the underlying constant $\kappa>0$ is exploited in details in Section \ref{sec:modelling} (cf.~Theorem \ref{thm:main-modelling-sec2} and Theorem \ref{thm:main-modelling2Hartmann}), it relates however to the ratio $(U_0/c)^2$ between the characteristic speed of plasma $U_0>0$ and the speed of light ``$c$'', for high value of ${\rm Re }\gg 1$. 

\noindent
The constant $\mathbb J\geq 0$ together with the second time derivative $\partial_t^2 u$ are derived from a well-known hyperbolic extension of the Navier-Stokes equations, a model which is known as Navier-Stokes with Cattaneo's law (cf.~\cite{MR3942552,MR2045417,Coulaud,MR4429384,MR2404054,MR3085226}). This extension was first proposed in fluid-dynamics by Carrassi and Morro \cite{carrassi1972AMN} (inspired by the original work of Cattaneo in the study of heat diffusion \cite{MR0032898,MR95680}). As most compelling reason to introduce this term, a positive value of $\mathbb J>0$ avoids indeed an infinite speed of propagation of $u$, which would be quite unnatural when considering the evolution of fluids at large scale. 

\smallskip
\noindent
The general concern of this paper is twofold. First, we aim to derive model \eqref{eq:main-system} from suitable asymptotics of the full Navier-Stokes-Maxwell's equations under Cattaneo's law (cf.~Section \ref{intro:modelling}). 
 
\noindent 
Secondly we aim to study the underlying well-posedness theory and prove the existence of global-in-time smooth solutions for system \eqref{eq:main-system}, by considering initial data that are small and highly regular (more precisely analytic in the horizontal coordinate $x\in \mathbb R$, cf.~Section \ref{intro:analysis}).




\subsection{A brief overview of the analysis of MHD boundary layers}$\,$

\noindent 
The analysis of boundary layers in magnetohydrodynamics have received from the mathematical community numerous investigations during the past decades. To the best of our knowledge, however, the derivation of system \eqref{eq:main-system} is new, hence there has not been related analytical results, up to now. The majority of the results concerns classical MHD-equations and the underlying boundary-layer theory, in which the displacement current is indeed neglected.  
In this paragraph we shall hence focus on the various contributions that dealt with certain equations that shear at least similarities with system \eqref{eq:main-system}. 


\noindent 
One of the system that has majorly received attention was provided by G\'erard-Varet and Prestipino in \cite{MR3657241}. Omitting the notation of the several dimensionless parameters, the equations read as follows:
\begin{equation}
	\label{eq:MHD-Prandtl-literature}
	\left\{\;
	\begin{aligned}
		& \pa_tu+u\pa_x u+v\pa_yu -\pa_y^2u+\pa_xp= b_1 \partial_x b_1 + b_2 \partial_y b_1,\\
		&\partial_y p = 0,\\
		&\partial_t b_1 + u \partial_x b_1 + v \partial_y b_1 - \partial_{yy}^2 b_1= b_1 \partial_x u + b_2 \partial_y u, \\
		&\partial_t b_2 + u \partial_x b_2 + v \partial_y b_2 - \partial_{yy}^2 b_2= b_1 \partial_x v + b_2 \partial_y v, \\
		&\partial_x u + \partial_y v = 0,\quad \partial_x b_1 +\partial_y b_2 = 0.
	\end{aligned}
	\right.
\end{equation}
The authors derived this system as boundary asymptotic of the classical MHD equations, under a stringent regime of the coupling parameters (which we also assume in this paper): the Hartmann number ${\rm Ha}$, the Reynolds number  ${\rm Re}$ and the magnetic Reynolds number  ${\rm Re_m}$ were all  proportional and assumed high values. 

\noindent 
The major differences of systems \eqref{eq:main-system} and \eqref{eq:MHD-Prandtl-literature}   can be recognised on the forcing term of the momentum equation in $u$,
a non-constant pressure in the vertical variable due to $\partial_y p \neq 0$ and the second time derivatives $\partial_{tt}^2u$, $\partial_{tt}^2b_1$ and $\partial_{tt}^2b_2$ on the equations for the magnetic field (which are due to the displacement current, cf.~also Remark \ref{rem:differences-between-MHD-Prandtl-and-ours}). 

\noindent
System \eqref{eq:MHD-Prandtl-literature} retains most terms of the original MHD equations and it reduces to the widespread Prandtl equations for purely hydrodynamic flows, when the magnetic field $(b_1,b_2)$ is null. Among the mathematical community, there has been hence an increasing interest to  transfer well-known analytical results of the Prandtl theory to the corresponding Prandtl-MHD equations \cite{MR3657241,MR3864769,MR4213671,MR4328431}:  existence and uniqueness of solutions, regularity analysis (Sobolev, analytic, Gevrey), stability of certain equilibria (such as shear flows).

\noindent 
At a first glance, one may think that system \eqref{eq:MHD-Prandtl-literature} 
shall satisfy reduced (or at least similar) properties than classical Prandtl. 
However, in \cite{MR3657241}, G\'erard-Varet and Prestipino  overturned this statement, when dealing with the stability of \eqref{eq:MHD-Prandtl-literature} around certain equilibria. The authors showed indeed how system \eqref{eq:MHD-Prandtl-literature} is stable to a suitable family of shear flows in which both plasma's velocity and magnetic field are parallel to the bounding flat surface: 
$$(u(t,x,y),v(t,x,y),b_1(t,x,y),b_2(t,x,y)) = (U_s(y), 0, 1, 0),$$ 
This stability holds already at the level of Sobolev regularities, a fact that is in sharp contrast with the Sobolev instability of the classical Prandtl equations (cf.~\cite{MR2952715,MR2601044}). Hence, in terms of Sobolev stability, Prandtl-MHD has enhanced properties than only Prandtl. To better understand this unusual characteristic, one shall first recall that the major difficulties of the Prandtl equations reside in the convective term $v\partial_y u$ (this vertical component $v$ has a lower regularity in the tangential variable than $u$ and is determined by the divergence free condition $\partial_x u +\partial_y v = 0$). However, when dealing with the Prandtl-MHD equations \eqref{eq:MHD-Prandtl-literature}, one can get rid of this 
``bad term'', by introducing a new modified velocity field $\tilde u = u + U'_s \phi$, where $\phi$ stands for the potential generating the magnetic field (i.e.~ $b_1 =\partial_y \phi$ and $b_2 = - \partial_x \phi$). This mathematical artifact has clarified certain observations in physics,  in particular the fact that a magnetic field has a stabilising effect on the flow of plasma and provides therefore a mechanism of containment.


\noindent 
Away from shear flows, Liu, Xie and Tong proposed in \cite{MR3975147} a generalisation of this Sobolev stability, when dealing with the full nonlinear version of equations \eqref{eq:MHD-Prandtl-literature}. The authors showed indeed that a modified velocity similar to $\tilde u$ could still be defined, as long as the tangential component $b_1$ of the magnetic field never vanishes (a condition known as 
``nondegeneracy of $b_1$''). This result was local in time and was extended globally by Liu and Zhang in \cite{MR4213671}, under a smallness condition on the initial data. 

\noindent 
After \cite{MR3975147}, a remaining open problem was to understand if the nondegeneracy of $b_1$ was somehow necessary in order to recover the mentioned Sobolev stability. The same authors Liu, Xie and Tong in \cite{MR3864769} provided a surprisingly positive answer to this dilemma: when linearising equations \eqref{eq:MHD-Prandtl-literature} around a family of shear flows of the form
$$(u(t,x,y),v(t,x,y),b_1(t,x,y),b_2(t,x,y)) = (U_s(t,y), 0, B_s(t,y), 0),$$ 
in which $B_s$ vanishes (together with some of its derivatives), it was shown that the corresponding system is indeed ill-posed in Sobolev spaces. 

\noindent 
This result in \cite{MR3864769} opened a further variety of questions, which regarded in particular the following aspects:
\begin{itemize}
    \item If the asymptotic limit ($\rm Re,\,Re_m\to \infty$) of the Prandtl-MHD equations \eqref{eq:MHD-Prandtl-literature} is well posed in Sobolev spaces, does a formal mathematical expansion reveal the corresponding boundary layers within the solutions of the original MHD equations 
    (for high values of $\rm Re \gg 1$ and $\rm Re_m \gg 1$)?
    \item If the nondegeneracy condition $b_1 \neq 0$ is necessary for the Sobolev stability, can one consider higher regularities (such as Gevrey), in order to relax this constraint?
\end{itemize}
Liu, Xie and Yang in \cite{MR3975147} provided a positive answer to the first question, as long as the tangential magnetic component $b_1 \geq \delta >0$ remains strictly positive. With some additional technical condition on the initial data, the authors showed that the differences between the smooth solutions of the original MHD equations and the boundary layers (which depend on $\rm Re>0$ and $\rm Re_m>0$) converge to the smooth solution of the limit case ${\rm Re},\, {\rm Re}_m\to \infty$. The convergence is indeed uniform in $L^\infty$ (both in time and space) because of the Sobolev stability.

\noindent 
For what concerns higher regularities, Li and Yang addressed in \cite{MR4270479} solutions with Gevrey regularities in ``$x$'' of order $3/2$ (a function space between analytic and Sobolev functions). Gevrey functions have significantly influenced and impacted the analysis of the Prandtl equations, since they still allow  to cope with smooth test functions (in contrast with analytic regularity). Without any assumption on the tangent component $b_1$ of the magnetic field, Li and Yang showed in \cite{MR4270479} that initial data in Gevrey $3/2$ generate local-in-time smooth solutions of \eqref{eq:MHD-Prandtl-literature}. It remains still an interesting open problem to establish if this result is optimal or if one can further enlarge the regularity (for instance towards Gevrey $2$, the optimal value of Prandtl, cf.~\cite{MR3925144}).



\subsection{Statement of our modelling results}\label{intro:modelling}$\,$

\noindent
The well-posedness results of classical   Navier-Stokes-Maxwell  equations without Cattaneo’s law can be found in \cite{Masmoudi2D,Masmoudi3D,Gallagher2D3D,Ogawa}. 
The approach of our modelling is first to introduce a suitable form of the
Navier-Stokes-Maxwell equations with Cattaneo’s law (cf.~System \eqref{eq:Maxwell+Navier-Stokes} and its dimensionless form \eqref{eq:Maxwell+Navier-Stokes2-dimensionless}) and secondly to examine the asymptotic of some related 
dimensionless parameters (that are indeed popular in MHD). 
More precisely, we derive System \eqref{eq:Maxwell+Navier-Stokes} for high values of 
the Reynolds number ${\rm Re}$, the magnetic Reynolds numbers ${\rm Re_m}$ and the Hartmann number ${\rm Ha}$ (for details, cf.~Sections \ref{sec:navier-stokes-maxwell-dimensionless}-\ref{sec:Hartmann}, as well as Theorem \ref{thm:main-modelling} and Theorem \ref{thm:main-modelling2}, below). 

\noindent
The nature of the derived equations depends on certain hypotheses on the characteristic speed $U_0>0$ of Plasma (we refer to \eqref{eq:Maxwell+Navier-Stokes2-dimensionless} for the explicit relation between the starting equations and $U_0$). We establish indeed boundary layers with thickness of two types: Prandtl or Hartmann.

\noindent 
To clarify our result, we shall first recall that in magnetohydrodynamics several types of boundary layers can occur, depending on the angle of orientation of the magnetic field boundary. Among the most relevant, Prandtl and Hartmann layers stand out, since they also differ on their thickness. Conventional Prandtl layers are indeed purely hydrodynamic and are characterised by a length, which is proportional to $1/\sqrt{\rm Re}$. On the other hand, Hartmann layers are mainly attributable to the magnetic field, having a thickness inversely proportional to the Hartmann number {\rm Ha}. Depending on the magnitude of an imposed magnetic field, Hartmann layers may be as thin as one desires, thus the velocity field of plasma  usually increases much more rapidly over a short distance from the boundary.


\noindent
Our first result in Theorem \ref{thm:main-modelling} shows that, when the characteristic speed of Plasma $U_0$ is proportional to the Reynolds number ${\rm Re}$, then 
System \eqref{eq:main-system} stands for a boundary layer within a region of thickness $1/\sqrt{Re}$ (Prandtl).
\begin{theorem}\label{thm:main-modelling}
    Consider the Navier-Stokes-Maxwell equations with Cattaneo's law in \eqref{eq:Maxwell+Navier-Stokes2-dimensionless}.
    Assume that the following relations between the dimensionless parameters in system \eqref{eq:Maxwell+Navier-Stokes2-dimensionless} are satisfied:
    \begin{equation*}
        \lim_{{\rm Re}\to +\infty} \frac{{\rm Ha}}{{\rm Re}}= \mathbb{H}\in \mathbb{R},
        \quad  
        {\rm Pr_m}:= \frac{{\rm Re_m}}{{\rm Re}}\text{ is fixed,}
        \quad \lim_{{\rm Re}\to +\infty} 
        \left( \frac{U_0}{c}\right)^2 \frac{1}{{\rm Re}} = \kappa \in \mathbb{R},
        \quad 
         \mathbb J := \kappa \mathcal{J}\text{ is fixed}.
    \end{equation*}
    Furthermore, assume that the initial data $ \mathbf{B}' = (\mathbf{B}_1',\mathbf{B}_2')^T\in \mathbb R^2$ and $\mathbf{E}'\in \mathbb R$ are such that 
    \begin{equation*}
       \mathbf{B}_1'= \mathbf{b}_1\in \mathbb{R} \text{ is fixed, while}\quad 
       \mathbf{B}_2' = \frac{1}{\sqrt{{\rm Re}}} \mathbf{b}_2,\quad 
       \mathbf{E}' = \frac{1}{\sqrt{{\rm Re}}} \mathbf{e},
    \end{equation*}
    for some $ \mathbf{b}_2\in \mathbb{R}$ and $ \mathbf{e}\in \mathbb{R}$.
    Then system \eqref{eq:main-system} appears as boundary layer of equations  
    \eqref{eq:Maxwell+Navier-Stokes2-dimensionless} 
    in the region
    \begin{equation*}
        (t',x',y') \in (0,T) \times \mathbb{R}\times  \left(0,\frac{1}{\sqrt{\rm Re}}\right),
    \end{equation*}
    when ${\rm Re}\to +\infty$ (and thus also when ${\rm Re}$, ${\rm Ha}\to +\infty$).
\end{theorem}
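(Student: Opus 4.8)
The plan is to prove Theorem~\ref{thm:main-modelling} by a formal Prandtl-type boundary-layer expansion of the dimensionless Navier--Stokes--Maxwell system with Cattaneo's law \eqref{eq:Maxwell+Navier-Stokes2-dimensionless}. Since the announced thickness is $1/\sqrt{\rm Re}$, the first step is to introduce the stretched vertical variable $y=\sqrt{\rm Re}\,y'$ (with $t=t'$, $x=x'$), which maps the physical slab $y'\in(0,1/\sqrt{\rm Re})$ onto the fixed reference layer $y\in(0,1)$ on which \eqref{eq:main-system} is posed. I would then postulate the anisotropic amplitude ansatz
\begin{equation*}
  u'=u,\quad v'=\tfrac{1}{\sqrt{\rm Re}}\,v,\quad b_1'=b_1,\quad b_2'=\tfrac{1}{\sqrt{\rm Re}}\,b_2,\quad e'=\tfrac{1}{\sqrt{\rm Re}}\,e,\quad p'=p,
\end{equation*}
where $u,v,b_1,b_2,e,p$ are the boundary-layer profiles evaluated at $(t',x',\sqrt{\rm Re}\,y')$. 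The scalings of $v$ and $b_2$ are dictated by the two incompressibility constraints, so that $\partial_{x'}u'+\partial_{y'}v'$ and $\partial_{x'}b_1'+\partial_{y'}b_2'$ remain $O(1)$ after the chain rule $\partial_{y'}=\sqrt{\rm Re}\,\partial_y$; the amplitudes of $b_2'$ and $e'$ coincide with the boundary data $\mathbf B_2'=\mathbf b_2/\sqrt{\rm Re}$ and $\mathbf E'=\mathbf e/\sqrt{\rm Re}$ prescribed in the hypotheses, which is precisely what makes the expansion self-consistent.

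The second step is to substitute this ansatz into each equation of \eqref{eq:Maxwell+Navier-Stokes2-dimensionless}, apply $\partial_{y'}=\sqrt{\rm Re}\,\partial_y$ and $\partial_{y'}^2={\rm Re}\,\partial_y^2$, and collect leading-order terms. In the horizontal momentum equation the time derivative $\partial_{t'}u'$, the convective pair $u'\partial_{x'}u'$ and $v'\partial_{y'}u'$, and the pressure gradient $\partial_{x'}p'=\partial_x p$ are all $O(1)$; the viscous contribution $(1/{\rm Re})\partial_{y'}^2u'=\partial_y^2u$ survives while $(1/{\rm Re})\partial_{x'}^2u'$ is negligible. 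The Cattaneo coefficient, of the form $\mathcal J\,(U_0/c)^2/{\rm Re}$, converges to $\mathbb J=\kappa\mathcal J$ by hypothesis and produces $\mathbb J\partial_t^2u$ at leading order. The Lorentz force is the delicate balance: each of its three terms $b_1'b_2'v'$, $u'(b_2')^2$, $b_2'e'$ carries two factors $1/\sqrt{\rm Re}$, hence an overall $1/{\rm Re}$, and the Hartmann-number grouping ${\rm Ha}^2/{\rm Re}=\mathbb H^2\,{\rm Re}\,(1+o(1))$ supplies exactly the compensating ${\rm Re}$, yielding the $O(1)$ force $\mathbb H^2(b_1b_2v-ub_2^2-b_2e)$. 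The same bookkeeping applied to the induction equations turns $(1/{\rm Re_m})\partial_{y'}^2b_i$ into $(1/{\rm Pr_m})\partial_y^2b_i$ (using ${\rm Pr_m}={\rm Re_m}/{\rm Re}$) and, via $\kappa/{\rm Pr_m}=(U_0/c)^2/{\rm Re_m}$, keeps the displacement-current terms $(\kappa/{\rm Pr_m})\partial_t^2b_i$ at leading order; the $b_1$ equation is $O(1)$ directly, whereas the $b_2$ equation carries a uniform factor $1/\sqrt{\rm Re}$ which is divided out before passing to the limit, recovering the third and fourth equations of \eqref{eq:main-system}.

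The third step handles the vertical momentum equation separately, since there the dominant term is the large gradient $\partial_{y'}p'=\sqrt{\rm Re}\,\partial_y p$, while the inertial and viscous contributions (all proportional to $v'=v/\sqrt{\rm Re}$) and the vertical Lorentz force are smaller by $1/\sqrt{\rm Re}$. Dividing through by $\sqrt{\rm Re}$ leaves only the hydrostatic-type balance $\partial_y p=\mathbb H^2(b_1b_2u-b_1^2v+b_1e)$, the second equation of \eqref{eq:main-system}. The Faraday pair reduces, after the chain rule and the $1/\sqrt{\rm Re}$ factors, to $\partial_tb_1+\partial_ye=0$ and $\partial_tb_2-\partial_xe=0$, and the two divergence-free conditions collapse to $\partial_xu+\partial_yv=0$ and $\partial_xb_1+\partial_yb_2=0$. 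Finally, evaluating the ansatz at $t'=0$ and at the two traces of $y'$ recovers the initial and boundary conditions \eqref{boundary-condition-our-main-eq}, the hypotheses $\mathbf B_1'=\mathbf b_1$, $\mathbf B_2'=\mathbf b_2/\sqrt{\rm Re}$, $\mathbf E'=\mathbf e/\sqrt{\rm Re}$ being exactly the boundary values of $b_1,b_2,e$.

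The step I expect to be the main obstacle is the \emph{simultaneous} self-consistency of the amplitude ansatz across all equations. The nontrivial point is not any single order-matching, but that one fixed anisotropic scaling --- $u,b_1,p$ of order $1$ against $v,b_2,e$ of order $1/\sqrt{\rm Re}$ --- is compatible at leading order with (i) both incompressibility constraints, (ii) the quadratic Lorentz force under the Hartmann-number normalisation, (iii) the second-order-in-time Cattaneo and displacement-current terms under $\kappa=\lim(U_0/c)^2/{\rm Re}$ and $\mathbb J=\kappa\mathcal J$, and (iv) the degenerate vertical momentum balance that must be divided by $\sqrt{\rm Re}$ before the limit. Verifying that the prescribed boundary-data scalings are \emph{forced by}, rather than merely consistent with, this ansatz is what ties the argument together and pins down the Prandtl thickness $1/\sqrt{\rm Re}$.
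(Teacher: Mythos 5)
Your proposal is correct and follows essentially the same route as the paper's proof of Theorem \ref{thm:main-modelling-sec2}: the stretched variable $y=\sqrt{\rm Re}\,y'$ with the anisotropic amplitudes $v',b_2',e'\sim 1/\sqrt{\rm Re}$ and $u',b_1',p'\sim 1$ is exactly the paper's rescaling \eqref{Prandtl-change-of-variables}--\eqref{new-variables-asymptotic} with $\varepsilon=1/\sqrt{\rm Re}$, and your order-by-order bookkeeping (Lorentz force balanced by ${\rm Ha}^2\varepsilon^4\to\mathbb H^2$, Cattaneo and displacement-current coefficients converging to $\mathbb J$ and $\kappa/{\rm Pr_m}$, vertical momentum divided by $\sqrt{\rm Re}$, the $b_2$- and Faraday equations divided by their uniform $1/\sqrt{\rm Re}$ factor) reproduces the paper's explicit rescaled system and limit $\varepsilon\to 0$. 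No gap.
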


\noindent 
Since $(t,x,y)$ are the variable of the boundary layer in \eqref{eq:main-system}, we clarify that $(t',x',y')$ are now the variables of the starting Navier-Stokes-Maxwell equations \eqref{eq:Maxwell+Navier-Stokes2-dimensionless}.

\noindent
The relation $U_0^2/c^2{\rm Re}\to \kappa>0$ is of course questionable, 
since it would imply that the characteristic speed $U_0$ assumes values that are much higher than the speed of light $``c"$. In case $U_0/c$ is fixed (we treat this case in Theorem \ref{thm:main-modelling2}), the contribution of the displacement current would indeed vanish,
thus an ansatz typical of the Prandtl theory would probably lead to the standard Prandtl-MHD equations \eqref{eq:main-system} (as derived in \cite{MR3657241}) rather than system \eqref{eq:main-system}. 

\noindent
Theorem \ref{thm:main-modelling} would seem therefore to suggest that when the characteristic speed of Plasma is constant, System \eqref{eq:main-system} is ineffective at the limit ${\rm Re}\to \infty$. 
With the next Theorem \ref{thm:main-modelling2}, we counteract this statement, by showing that System \eqref{eq:main-system} remains an accurate boundary layer, when considering a different scaling (and thus a different region of the layer).
\begin{theorem}\label{thm:main-modelling2}
    Consider the Navier-Stokes-Maxwell equations with Cattaneo's law in \eqref{eq:Maxwell+Navier-Stokes2-dimensionless}.
        Assume that the following relations between the dimensionless parameters in system \eqref{eq:Maxwell+Navier-Stokes2-dimensionless} are satisfied:
    \begin{equation*} 
        \lim_{{\rm Re}\to +\infty} \frac{{\rm Ha}}{{\rm Re}}= \mathbb{H}\in \mathbb{R} 
        \quad  
        \text{and}
        \quad
        {\rm Pr_m}:=\frac{{\rm Re_m}}{{\rm Re}},
        \;  \kappa:= \left( \frac{U_0}{c}\right)^2,
        \;
         \mathbb J := \kappa \mathcal{J}\text{ are all fixed}.
    \end{equation*}
    Furthermore, assume that the initial data $ \mathbf{B}' = (\mathbf{B}_1',\mathbf{B}_2')^T$ are such that 
    \begin{equation*}
       \mathbf{B}_2'= \mathbf{b}_2\in\mathbb R  \text{ is fixed, while} 
       \quad  
       \mathbf{B}_1' = \sqrt{\frac{\rm Ha}{\mathbb H}} \mathbf{b}_1,
       \quad 
       \mathbf{E}' =  \sqrt{\frac{\rm Ha}{\mathbb H}} \mathbf{e},
    \end{equation*}
    for some $ \mathbf{b}_1\in \mathbb{R}$ and $ \mathbf{e}\in \mathbb{R}$. Then system \eqref{eq:main-system} appears as boundary layer in the region
    \begin{equation*}
        (t',x',y') \in \left(0, \frac{\mathbb H}{\rm Ha}T'\right) \times \mathbb{R}\times  \left(0,\frac{\mathbb H}{\rm Ha}
        \right),
    \end{equation*}
    when ${\rm Re}\to +\infty$ (and thus also when ${\rm Re}$, ${\rm Ha}\to +\infty$).
\end{theorem}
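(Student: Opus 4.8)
The plan is to prove Theorem \ref{thm:main-modelling2} by the same dominant-balance (matched asymptotic) strategy used for Theorem \ref{thm:main-modelling}, but replacing the parabolic Prandtl scaling by a \emph{hyperbolic Hartmann scaling}. The region claimed in the statement dictates the change of variables: the target domain $(0,T)\times\mathbb R\times(0,1)$ of System \eqref{eq:main-system} is the preimage of $(0,\frac{\mathbb H}{\rm Ha}T')\times\mathbb R\times(0,\frac{\mathbb H}{\rm Ha})$ under
$$ t' = \frac{\mathbb H}{\rm Ha}\,t,\qquad x' = x,\qquad y' = \frac{\mathbb H}{\rm Ha}\,y, $$
so that $\partial_{t'} = \frac{\rm Ha}{\mathbb H}\partial_t$ and $\partial_{y'} = \frac{\rm Ha}{\mathbb H}\partial_y$. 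The decisive structural difference from the Prandtl regime of Theorem \ref{thm:main-modelling} is that here time and vertical length are stretched by the \emph{same} Hartmann factor $\mathbb H/{\rm Ha}\sim 1/{\rm Re}$, rather than time being left unscaled; this common hyperbolic scaling is what will let the second-order-in-time Cattaneo and displacement-current terms be retained at leading order while keeping $\kappa = (U_0/c)^2$ fixed (the physically acceptable regime, in contrast with the limit $(U_0/c)^2/{\rm Re}\to\kappa$ of Theorem \ref{thm:main-modelling}).

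Next I would fix the amplitude ansatz for the unknowns $(u',v',b_1',b_2',e',p')$ of \eqref{eq:Maxwell+Navier-Stokes2-dimensionless}, choosing the powers of ${\rm Ha}/\mathbb H$ so that the induced layer fields $(u,v,b_1,b_2,e,p)$ are $O(1)$. The tangential velocity $u$ and the transverse magnetic component $b_2$ are kept $O(1)$ (consistent with $\mathbf B_2' = \mathbf b_2$ fixed), while the prescribed boundary data impose that the induced tangential field and the electric field are amplified by $\sqrt{{\rm Ha}/\mathbb H}$, i.e.\ $b_1' = \sqrt{{\rm Ha}/\mathbb H}\,b_1$ and $e' = \sqrt{{\rm Ha}/\mathbb H}\,e$; the amplitude of the vertical velocity $v'$ is then forced by the incompressibility constraint $\partial_{x'}u' + \partial_{y'}v' = 0$, and the scaling of $p'$ by the horizontal momentum balance. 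Substituting this ansatz into each equation of \eqref{eq:Maxwell+Navier-Stokes2-dimensionless} and expanding every field in the small parameter $\mathbb H/{\rm Ha}$, I would track the power of ${\rm Ha}$ (equivalently of ${\rm Re}$, since ${\rm Ha}/{\rm Re}\to\mathbb H$) carried by each term.

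I would then collect the leading-order contributions as ${\rm Re}\to+\infty$, invoking the fixed limits ${\rm Ha}/{\rm Re}\to\mathbb H$, ${\rm Pr_m} = {\rm Re_m}/{\rm Re}$, $\kappa = (U_0/c)^2$ and $\mathbb J = \kappa\mathcal J$, and verify that each line of \eqref{eq:main-system} emerges with exactly the stated coefficients: the horizontal momentum balance with the Cattaneo term $\mathbb J\partial_t^2 u$, the viscosity $-\partial_y^2 u$ (whose $1/{\rm Re}$ coefficient is compensated by the $({\rm Ha}/\mathbb H)^2$ from $\partial_{y'}^2$) and the Lorentz force $\mathbb H^2(b_1b_2 v - u b_2^2 - b_2 e)$; the vertical balance $\partial_y p = \mathbb H^2(b_1 b_2 u - b_1^2 v + b_1 e)$; the two induction equations with the displacement-current terms $(\kappa/{\rm Pr_m})\partial_t^2 b_i$ and the diffusions $(1/{\rm Pr_m})\partial_y^2 b_i$; the Faraday relations $\partial_t b_1 + \partial_y e = 0$ and $\partial_t b_2 - \partial_x e = 0$; and the two divergence-free conditions. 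The boundary and initial data \eqref{boundary-condition-our-main-eq} are finally recovered by transporting the conditions of \eqref{eq:Maxwell+Navier-Stokes2-dimensionless} through the same change of variables.

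The heart of the argument, and its main obstacle, is the consistency of this dominant balance: one must check that the \emph{single} ansatz above brings all the listed terms to the \emph{same} leading order, with none of them either diverging or collapsing to a subleading power of ${\rm Ha}$, and with the surviving coefficients equal precisely to $\mathbb J$, $\mathbb H^2$, $\kappa/{\rm Pr_m}$ and $1/{\rm Pr_m}$. Two points are the most delicate. First, since $\kappa$ is held fixed, the second-time-derivative terms survive only through the hyperbolic time scaling $t' = (\mathbb H/{\rm Ha})t$, which promotes the originally $O(1/{\rm Re})$ Cattaneo and displacement-current contributions up to order one; I would have to confirm that this produces the exact coefficients $\mathbb J$ and $\kappa/{\rm Pr_m}$ rather than spurious powers of ${\rm Ha}$. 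Second, the $\sqrt{{\rm Ha}/\mathbb H}$ amplification of $b_1$ and $e$ must be shown to be the unique choice compatible simultaneously with the magnetic divergence constraint and with the quadratic nonlinearities, so that the mixed products $b_1 b_2 v$, $b_1^2 v$ and $b_1 e$ in the Lorentz force and the transport terms $b_1\partial_x u$, $b_2\partial_y u$ in the induction equations all land at the correct order. Establishing that this system of scaling exponents closes consistently is where the genuine work lies.
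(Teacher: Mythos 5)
Your overall strategy (rescale, substitute, track powers of the small parameter, pass to the limit) is the one the paper uses, but the specific ansatz you commit to is not the one that closes the dominant balance, and with your choices the balance in fact fails. You take $x'=x$ and keep the tangential velocity $O(1)$ ($U_1'=u$), with $v'$ then forced to be $O(\delta)$ by incompressibility, where $\delta=\mathbb H/{\rm Ha}$. Under $t'=\delta t$, $y'=\delta y$ this makes $\partial_{t'}U_1'$ and $\tfrac{1}{\rm Re}\partial_{y'}^2U_1'$ of order $\delta^{-1}$ while $U_1'\partial_{x'}U_1'$ and $U_2'\partial_{y'}U_1'$ remain of order $1$, so the convection terms (and likewise $u\partial_xb_1+v\partial_yb_1$ in the induction equation) drop out of the limit instead of surviving in \eqref{eq:main-system}. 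Worse, with $B_1'=\delta^{-1/2}b_1$, $B_2'=b_2$, $E'=\delta^{-1/2}e$, $U_2'=\delta v$ the three Lorentz-force contributions $\tfrac{{\rm Ha}^2}{\rm Re}B_1'B_2'U_2'$, $\tfrac{{\rm Ha}^2}{\rm Re}U_1'(B_2')^2$ and $\tfrac{{\rm Ha}^2}{\rm Re}E'B_2'$ come out at orders $\delta^{-1/2}$, $\delta^{-1}$ and $\delta^{-3/2}$ respectively, so they cannot all survive together. The system of exponents you set up does not close.

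The paper's proof (Theorem \ref{thm:main-modelling2Hartmann}, via \eqref{new-variables-asymptotic-Hartmann}--\eqref{new-functions-Hartmann} and \eqref{eq:final-hartmann}) differs in exactly the two places you missed: the horizontal variable is also rescaled, $x=x'/\sqrt{\delta}$, and the tangential velocity is amplified by the same factor as $b_1$ and $e$, namely $u=\sqrt{\delta}\,U_1'$ (equivalently $U_1'=\delta^{-1/2}u$), together with $v=U_2'$ and $p=\delta P'$. With these choices every term of the horizontal momentum equation sits at order $\delta^{-3/2}$ (including all three Lorentz terms, since $\tfrac{{\rm Ha}^2}{\rm Re}\approx\mathbb H^2\delta^{-1}$ multiplies quantities of size $\delta^{-1/2}$), and the induction equations balance at $\delta^{-3/2}$ and $\delta^{-1}$; multiplying by the appropriate powers of $\delta$ then yields \eqref{eq:main-system}. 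Note that the region quoted in the statement, $(0,\tfrac{\mathbb H}{\rm Ha}T')\times\mathbb R\times(0,\tfrac{\mathbb H}{\rm Ha})$, is misleading on the $x'$-scaling: the remark following the theorem explains that the layer is really localised in $x'$ on a scale $\sqrt{1/{\rm Ha}}$ as well, which is precisely the feature your inferred change of variables discards. To repair your argument you would need to adopt the anisotropic $x$-rescaling and the $\sqrt{{\rm Ha}/\mathbb H}$ amplification of $U_1'$, after which your term-by-term bookkeeping goes through as in \eqref{eq:final-hartmann}.
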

\noindent 

\begin{rem}
The domain of the boundary layer in Theorem \ref{thm:main-modelling2} is not only close to $y' = 0$ but also to the time origin $t' = 0$. This is not surprising from a mathematical point of view. 
Indeed, at the asymptotic limit ${\rm Re}\to \infty$, the Navier-Stokes-Maxwell's equations with Cattaneo's law \eqref{eq:Maxwell+Navier-Stokes2-dimensionless} switch from hyperbolic to parabolic in the variables $(t,y)$. This leads to a loss of initial data on the time derivative of the velocity field and the magnetic field, thus the appearance of boundary layers near the origin in time.

\noindent 
From a physical point of view, one would wonder if this domain is rather an artifact of the equations, since such a short time would not be observed in real applications. We counter this statement through the following remark: although $ t'\in (0,\delta T)$ reflects a short range of time, the solution $(u,v,b_1,b_2,e)$ of \eqref{eq:main-system} contributes 
to dynamics of Plasma $(U_1', U_2', B_1', B_2', E')$ with a rescaled magnitude of order $1/\delta$ (for more details cf.~transformation \eqref{new-functions-Hartmann}). This property would suggest that the solution $(u,v,b_1,b_2,e)$ might still impact  the evolution of Plasma near bounding surface for larger time $t'>\delta T$. The formal proof of this statement is however above the interest of this paper. 
\end{rem}

\noindent 
We conclude this introduction with a short overview of the sections concerning our modelling. In Section \ref{sec:navier-stokes-maxwell} and Section \ref{sec:navier-stokes-maxwell-dimensionless} we introduce the Navier-Stokes-Maxwell equations with Cattaneo's law on a suitable dimensionless form. Next, in Section \ref{sec:modelling-Prandtl}, we prove Theorem \ref{thm:main-modelling} and derive System \eqref{eq:main-system} as a boundary layer with thickness of Prandtl type. Finally, we address the Hartmann origin of System \eqref{eq:main-system}, by proving Theorem \ref{thm:main-modelling2} in Section \ref{sec:Hartmann}.

\subsection{Statement of our analytic results}\label{intro:analysis}$\,$

\noindent
Once concluded the modelling of Theorem \ref{thm:main-modelling} and Theorem \ref{thm:main-modelling2}, we pass to investigate the well-posedness problem of the derived system. Since the considered model is an extension of the standard Prandtl equations, it presents similar analytical challenges, in particular the lack of diffusion (and thus of regularising effects) on the variable $x\in \mathbb R$. The major nonlinearities can indeed generate strong instabilities in the horizontal direction, specifically under the occurrence of high oscillations of the solutions (the contribution of the high frequencies in $x\in \mathbb R$).  It is rather common in the scientific community to address  the analysis of boundary layers by considering therefore highly regular initial data, such as analytic in $x\in \mathbb R$ \cite{MR4362378,MR4271962,MR4125518}. We postpone the precise definition of this functional framework to Section \ref{sec:analysis}, we shall however mention that these are functions whose frequencies $\xi\in \mathbb R$ under Fourier transform decay like $\exp(-\tau |\xi|)$, for some $\tau >0$ known as radius of analyticity. Our analytical result asserts that if the initial data are indeed analytic and are sufficiently small, then there exists a global-in-time analytic solution of equations \eqref{eq:main-system}, whose radius of analyticity decays exponentially in time.
\begin{theorem} \label{intro-thm:existence-analytic-solutions}
Assume homogeneous boundary conditions in \eqref{boundary-condition-our-main-eq}: $(\mathbf b_1,\mathbf b_2, \mathbf e)=(0,0,0)$.
For any $s>2$, there exists a sufficiently small positive constant $\varepsilon_s\in [0,1)$ (which depends uniquely upon $s$),  such that the following result holds true. 
Let $\bar u$, $\bar b_{1}$ and $\tilde b_1$ be initial data of \eqref{eq:main-system}
that are  analytic in the variable $x\in \mathbb{R}$ with radius of analyticity $\tau_0>0$:
\begin{equation}\label{intro-Sobolev-spaces}
\begin{alignedat}{4}
    e^{\tau_0 (1+|D_x|)} \bar u     
    \quad\text{and}\quad
    e^{\tau_0 (1+|D_x|)} \bar b_1    
    \quad 
    &\text{belong to}
    \quad 
    H^{s+1,1}(\mathbb R \times (0,1)),\\
    e^{\tau_0 (1+|D_x|)} \tilde u   
    \quad\text{and}\quad
    e^{\tau_0 (1+|D_x|)} \tilde b_1  
    \quad 
    &\text{belong to}
    \quad 
    H^{s,0}(\mathbb R \times (0,1)).\\
\end{alignedat}
\end{equation} 
If the following smallness condition on the initial data holds true
\begin{equation}\label{intro-thm:small-condition}
\begin{aligned}
        \|  
            &
            e^{\tau_0(1+|D_x|)} \bar u       
        \|_{H^{s+1,0}}
        +
        \|  
            e^{\tau_0(1+|D_x|)} 
            \partial_y \bar u       
        \|_{H^{s,0}}
        +
        \|  
            e^{\tau_0(1+|D_x|)} \tilde u       
        \|_{H^{s,0}}
        +
        \|  
            e^{\tau_0(1+|D_x|)}  
            \bar b_1     
        \|_{H^{s+1,0}}
        +\\
        &+\!
        \|  
            e^{\tau_0(1+|D_x|)} \partial_y \bar b_1     
        \|_{H^{s,0}}
       \!+\!
        \|  e^{\tau_0(1+|D_x|)}  \tilde b_1   
        \|_{H^{s,0}}
        \leq 
        \bigg(
        \frac{\min \{  1, \mathbb J, \kappa/{\rm Pr}_m\}^\frac{3}{2}}
        {\max \{  1, \mathbb J, \kappa/{\rm Pr}_m\}^\frac{5}{2}}
        \frac{
        \min\{\tau_0,\tau_0^{-1}\}\frac{3}{2}}{\max\{1, \mathbb{H}^2\}\max\{{\rm Pr_m}^{-1}, {\rm Pr}_m\}^\frac{1}{2}}
        \bigg)
        \varepsilon_s,
\end{aligned}
\end{equation}
then there exists a global-in-time analytic solution $(u, b_1)$ of \eqref{eq:main-system}, which has a decaying radius of analyticity $\tau:\mathbb R_+ \to (0,\tau_0]$ given by
\begin{equation}\label{intro-radius-of-analyticity-thm:existence-of-solutions}
\begin{aligned}
    \tau(t)  := 
        \tau_0
            \exp\Big\{ 
                -\frac{t}{16 \max\{1, \mathbb J, \kappa/{\rm Pr}_m\} }
        \Big\}>0.
\end{aligned}
\end{equation}
Furthermore,  the analytic norms of the solution decay exponentially in time $t\in \mathbb R_+$ as follows:
\begin{equation}\label{intro-norms-solution}
\begin{aligned}
    \| &e^{\tau(t)(1+|D_x|)} u(t)             \|_{H^{s+1,0}}^2 + 
    \| e^{\tau(t)(1+|D_x|)} \partial_t u(t)   \|_{H^{s,0}}^2+ 
    \| e^{\tau(t)(1+|D_x|)} \partial_y u(t)   \|_{H^{s,0}}^2 +
    \\
    &+
    \| e^{\tau(t)(1+|D_x|)} b_1(t)              \|_{H^{s+1,0}}^2 + 
    \| e^{\tau(t)(1+|D_x|)} \partial_t b_1(t)   \|_{H^{s,0}}^2+ 
    \| e^{\tau(t)(1+|D_x|)} \partial_y b_1(t)   \|_{H^{s,0}}^2 
    \\
    &
    \leq 
    C({\mathbb J, \kappa, {\rm Pr_m},\tau_0})
   \bigg\{
    \| e^{\tau_0(1+|D_x|)} \bar u               \|_{H^{s+1,0}}^2 + 
    \| e^{\tau_0(1+|D_x|)}  \tilde  u           \|_{H^{s,0}}^2+ 
    \| e^{\tau_0(1+|D_x|)} \partial_y\bar  u    \|_{H^{s,0}}^2 +
    \\
    &+
    \| e^{\tau_0(1+|D_x|)} \bar b               \|_{H^{s+1,0}}^2 + 
    \| e^{\tau_0(1+|D_x|)}  \tilde  b           \|_{H^{s,0}}^2+ 
    \| e^{\tau_0(1+|D_x|)} \partial_y\bar  b    \|_{H^{s,0}}^2 
    \bigg\}
    \exp\bigg\{-\frac{t}{8\max\{1, \mathbb J, \kappa/{\rm Pr_m}\}}
    \bigg\}
    .
\end{aligned}    
\end{equation}
where the constant $ C({\mathbb J, \kappa, {\rm Pr_m},\tau_0})$ is defined by
\begin{equation*}
    C({\mathbb J, \kappa, {\rm Pr_m},\tau_0})
    =
    4^3
    \frac{
        \max\{1,\mathbb J, \kappa/{\rm Pr_m}\}^3
    }
    {
        \min\{1,\mathbb J, \kappa/{\rm Pr_m}\}^3
    }
    \max\big\{{\rm Pr_m}, {\rm Pr_m^{-1}}\big\}
    \max \{\tau_0, \tau_0^{-1} \}^2.
\end{equation*}
\end{theorem}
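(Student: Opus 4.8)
The plan is to view \eqref{eq:main-system} as a coupled pair of dissipative wave equations for the two genuine unknowns $u$ and $b_1$, after eliminating every other field through the structural constraints. Using the incompressibility conditions together with the homogeneous boundary data, I would write
$$v(t,x,y) = -\int_0^y \partial_x u\,dy', \qquad b_2(t,x,y) = \int_y^1 \partial_x b_1\,dy',$$
recover the electric field from $\partial_y e = -\partial_t b_1$ with $e|_{y=1}=0$, and solve $\partial_y p = \mathbb H^2( b_1 b_2 u - b_1^2 v + b_1 e)$ for the pressure up to its trace on $y=1$. Each of these is an operator that is nonlocal only in $y$, is bounded on the anisotropic analytic spaces by Minkowski's inequality, and costs exactly one horizontal derivative in the cases of $v$, $b_2$ and $\partial_x p$. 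The problem thus reduces to a closed system for $(u,b_1)$ of the schematic form $\mathbb J\partial_t^2 u+\partial_t u-\partial_y^2 u = \mathcal N_u$ and $(\kappa/{\rm Pr_m})\partial_t^2 b_1+\partial_t b_1-(1/{\rm Pr_m})\partial_y^2 b_1 = \mathcal N_{b_1}$, where $\mathcal N_u,\mathcal N_{b_1}$ collect the quadratic convective, Lorentz and stretching terms.

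The central object is a time-dependent analytic energy. Writing $\Lambda_\tau := e^{\tau(t)(1+|D_x|)}$ with $\tau(t)$ as in \eqref{intro-radius-of-analyticity-thm:existence-of-solutions}, I would propagate weighted anisotropic norms and build, for $f\in\{u,b_1\}$, a damped-wave energy of the form
$$\mathcal E(t) = \tfrac12\sum_{f}\big(c_f\|\Lambda_\tau \partial_t f\|_{H^{s,0}}^2 + d_f\|\Lambda_\tau\partial_y f\|_{H^{s,0}}^2 + \|\Lambda_\tau f\|_{H^{s+1,0}}^2\big) + \epsilon\sum_{f}\langle\Lambda_\tau\partial_t f,\Lambda_\tau f\rangle,$$
with coefficients $c_f,d_f$ tuned to $\mathbb J$ and $\kappa/{\rm Pr_m}$ and Komornik-type cross terms (small $\epsilon$) that convert the weak damping of a dissipative wave into coercive control of the full gradient. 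Differentiating in time produces (i) the dissipation $\|\Lambda_\tau\partial_t f\|^2+\epsilon\|\Lambda_\tau\partial_y f\|^2$ from the damped-wave structure together with the Poincaré inequality (Dirichlet data at $y=0,1$), and (ii) the crucial analytic-gain term $\dot\tau\,\||D_x|^{1/2}\Lambda_\tau f\|^2\le0$ arising from $\frac{d}{dt}\Lambda_\tau = \dot\tau(1+|D_x|)\Lambda_\tau$, which furnishes the extra horizontal derivative that sustains the top-order norm $\|\Lambda_\tau u\|_{H^{s+1,0}}$ and pays for the derivative loss in the nonlinearities.

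It remains to estimate $\mathcal N_u,\mathcal N_{b_1}$ in the weighted spaces. The key analytic product estimate uses the subadditivity $e^{\tau|\xi|}\le e^{\tau|\xi-\eta|}e^{\tau|\eta|}$, so that $\Lambda_\tau$ is tame on products, while for $s>2$ the space $H^{s,0}$ over $\mathbb R\times(0,1)$ is an algebra in $x$ and delivers the $L^\infty_y$ control (via $H^1_y\hookrightarrow L^\infty_y$) needed to handle the $y$-nonlocal factors $v,b_2,e$. The dangerous terms $v\partial_y u$, $b_2\partial_y u$, $b_1\partial_x u$ and $\partial_x p$ each lose one horizontal derivative through the $y$-integrals above, and this single lost derivative is recovered by pairing against the gain term, via bounds of the type $|\langle \Lambda_\tau(v\partial_y u),\Lambda_\tau\partial_t u\rangle|\lesssim \sqrt{\mathcal E}\,\big(|\dot\tau|\,\||D_x|^{1/2}\Lambda_\tau u\|^2+\|\Lambda_\tau\partial_t u\|^2\big)$. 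I expect the \emph{main obstacle} to be exactly this interplay: because the system is hyperbolic, only the weak damping $\|\partial_t f\|^2$ is available (there is no parabolic smoothing), so the analytic gain must single-handedly absorb the derivative loss while the cross-term dissipation $\epsilon\|\Lambda_\tau\partial_y f\|^2$ closes the gradient. Making the two mechanisms cooperate forces a delicate tuning of $\epsilon$ and of the energy coefficients against $\mathbb J,\kappa/{\rm Pr_m},\mathbb H$, and it is this balance that dictates the explicit thresholds in \eqref{intro-thm:small-condition} and the rate in \eqref{intro-radius-of-analyticity-thm:existence-of-solutions}.

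Collecting these bounds yields a differential inequality $\frac{d}{dt}\mathcal E + D \le C\sqrt{\mathcal E}\,D$, where $D$ denotes the total dissipation (damping plus analytic gain). The smallness condition \eqref{intro-thm:small-condition} is designed so that $C\sqrt{\mathcal E(0)}\le \tfrac12$; a standard continuity argument then shows that $C\sqrt{\mathcal E(t)}\le\tfrac12$ persists for all time, whence $\frac{d}{dt}\mathcal E+\tfrac12 D\le0$, the radius $\tau(t)$ never reaches zero, and it keeps the prescribed exponential form. Since $D\gtrsim \big(\max\{1,\mathbb J,\kappa/{\rm Pr_m}\}\big)^{-1}\mathcal E$ by Poincaré and the cross-term coercivity, Grönwall's lemma produces the exponential decay \eqref{intro-norms-solution} with the stated rate. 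Finally, global existence of the analytic solution follows by running these uniform estimates on a standard approximation scheme (Friedrichs mollification, or an iteration scheme with truncated analytic weight) and passing to the limit, while uniqueness follows from the same energy estimate applied to the difference of two solutions.
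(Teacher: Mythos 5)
Your proposal follows essentially the same route as the paper: reduction of \eqref{eq:main-system} to a closed system in $(u,b_1)$ via the $y$-integral formulas for $v$, $b_2$, $e$ and $p$, a hyperbolic energy with Komornik-type cross terms (the paper's version is $\tfrac12\|\mathbb J(\partial_t u)_\eta+u_\eta\|_{H^{s,0}}^2$ plus the analogous $b_1$ term), a time-decaying analytic radius $\tau_0e^{-\lambda t}$ whose derivative supplies the higher-order dissipation that absorbs the horizontal derivative loss, anisotropic product laws based on the subadditivity of $e^{\tau|\xi|}$ and the $y$-integral structure, and a continuity/bootstrap argument under the smallness condition together with Gr\"onwall for the exponential decay. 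The one imprecision is that you call the nonlinearities quadratic, whereas the Lorentz-force and pressure contributions are trilinear — so the closing inequality also carries terms of the form $\mathcal E_s\,\mathcal D_{s+\frac12}$ and $\mathcal E_s\sqrt{\mathcal D_{s+\frac12}\mathcal D_{s+\frac32}}$, forcing the full cascade of gained regularities up to $H^{s+\frac32,0}$ (obtained from the $(\eta')^3$ weight) rather than a single half-derivative gain; this complicates the bookkeeping but not the overall strategy.
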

\noindent 
Some remarks are here in order. The statement consider uniquely the state variables $(u,b_1)$. Indeed all others variables $(v,b_2, e)$ are  determined by the  divergence-free conditions, the Faraday's law and the homogeneous boundary conditions  :
\begin{equation*}
    v(t,x,y) = -\int_0^y \partial_x u(t,x,z)dz,\quad 
    b_2(t,x,y) = -\int_0^y \partial_x b_1(t,x,z)dz\quad 
    e(t,x,y) =  -\int_0^y \partial_t b_1(t,x,z)dz.
\end{equation*}
We refer to Section \ref{sec:reduced-system} for more details, and to \eqref{eq:MHD-Prandtl-without-e} for an explicit form of the equations in terms of $(u,b_1)$.

 \noindent
 The function space described by \eqref{intro-Sobolev-spaces} is analytic in the variable $x\in \mathbb R$, since $e^{\tau_0(1+|D_x|)}$  is a Fourier multiplier  that enforces an exponential decay on the frequencies of the initial data (more details in Section \ref{sec:analytic-fcts}). The range $s>2$ of the related norms is however a pure artifact of our analysis rather than a real restriction. This condition simplifies indeed certain estimates (cf.~for instance Lemma \ref{lemma:technical-lemma-product-law-Hs0} together with \eqref{est1-where-we-need-s>2} where $\sigma_2-1/2 =s-2>0$). Nevertheless we could also consider $s\in [0,2)$, since $e^{\tau_0(1+|D_x|)} \bar u$ (and similarly all the other initial data) can always be recasted as $e^{-\epsilon (1+|D_x|)} e^{(\tau_0+\epsilon)(1+|D_x|)} \bar u $, where the Fourier multiplier $e^{-\epsilon (1+|D_x|)}$ is a regularising operator (of course we would need a slightly higher radius of analiticity). We do not pursue this approach just for the sake of a short presentation.

\noindent
The small parameter $\varepsilon_s$ in \eqref{intro-thm:small-condition} depends uniquely on $s>2$ and can be explicitly defined as (cf.~\eqref{def-esp_s-T*})
\begin{equation*}
     \varepsilon_s := 
      \frac{s-2}{2^{2s+14}}
      \frac{1}{1+\frac{s-2}{\sqrt{s-1}}}
     .
\end{equation*}
Moreover the right hand-side of \eqref{intro-thm:small-condition} and the size of the initial data decrease proportionally to $\tau_0^{3/2}$ (when the radius of analyticity $\tau_0<1$). This aspect is revealed and supported by the function framework in \eqref{intro-Sobolev-spaces}, which converges to standard Sobolev spaces when $\tau_0$ vanishes (our model may still be ill-posed in Sobolev, as Prandtl).

\smallskip
\noindent 
Let us now comment on the relation between the smallness condition \eqref{intro-thm:small-condition} and the different physical parameters of system \eqref{eq:main-system}. When $\mathbb H\gg 1$ the nonlinearities at the right-hand side of the $u$-equation becomes predominant, hence a more restrictive condition on the initial data is natural in order to achieve analytic stability. 
Moreover, also the constants $\mathbb J$  and $\kappa /{\rm Pr}_m$ play a major role in \eqref{intro-thm:small-condition}, since they inherently decrease the size of the initial data, when they converge towards $0$. At a first glance, this property would seem questionable, since we would expect that when these constants vanish (i.e.~both the inertial term of the velocity field and the displacement current are neglected) we should recover similar equations to Prandtl-MHD in \eqref{eq:MHD-Prandtl-literature}. Consequently, the equations would switch from hyperbolic on the $y$-direction (with damping mechanisms) to parabolic, a setting which is usually more stable. This observation is however inaccurate, since our model \eqref{eq:main-system} owns a more involved structure than Prandtl-MHD \eqref{eq:MHD-Prandtl-literature}, which can be  highlighted in the following aspects:
\begin{itemize}
       \item The right-hand side of the momentum equation (first equation in \eqref{eq:main-system}) has terms that are trilinear in the solution (contrary to the bilinear ones in \eqref{eq:MHD-Prandtl-literature}). These terms further increase the instabilities of our model, when compared with the ones of boundary layers in MHD.
       \item The contribution of the pressure is not trivial as in Prandtl-MHD, since $\partial_y p$ is not identically null and it encompasses further trilinear terms. These are indeed the more challenging terms to estimate.  
\end{itemize}
Our analysis and our smallness condition \eqref{intro-thm:small-condition} therefore suggest that the contributions $\mathbb J $, $\kappa/{\rm Pr_m}$ of the displacement current and the Cattaneo's law  have not only a role as derivation of our model, but they may rather have a stabilising effect on the underlying solutions (at least at the level of analytic regularities).

\smallskip 
\noindent
The part of this paper concerning the analysis of system \eqref{eq:main-system} is organized as follows. In Section~\ref{sec:analytic-fcts} we define the function spaces of the analytic solutions, while in Section~\ref{sec:reduced-system} we provide a compact formulation of System \eqref{eq:main-system}. Section \ref{sec:proof-of-main-thm} is devoted to the sketch of the proof of Theorem \ref{intro-thm:existence-analytic-solutions}, postponing the more technical parts. Our approach is indeed based on a suitable estimate of the norms of the solutions (cf.~Proposition \ref{prop:the-overall-final-estimate-in-eta}, whose proof is formally developed in Section \ref{sec:proof-proposition-est}).

\section{Derivation of the model}\label{sec:modelling}

\noindent 
This section is devoted to prove Theorem \ref{thm:main-modelling} and Theorem \ref{thm:main-modelling2}. To this end, in Section \ref{sec:navier-stokes-maxwell} we introduce a suitable form of the Navier-Stokes-Maxwell's equations with Cattaneo's law, that we recast in their dimensionless form in Section \ref{sec:navier-stokes-maxwell-dimensionless}. Section \ref{sec:modelling-Prandtl} is hence devoted to prove Theorem \ref{thm:main-modelling}, while in Section \ref{sec:Hartmann} we deal with  Theorem \ref{thm:main-modelling2}.


\subsection{The Navier-Stokes-Maxwell equations with Cattaneo's law}\label{sec:navier-stokes-maxwell}$\,$

\noindent
We begin with by recalling the widespread form of the two-dimensional Navier-Stokes-Maxwell system with Cattaneo's law:
\begin{equation}\label{eq:Maxwell+Navier-Stokes}
    \begin{cases}
        \frac{\rho \nu \mathcal{J}}{c^2}
        \partial_{\tau }^2 \overrightarrow U
        +
       \rho 
       \big(
            \partial_\tau \overrightarrow  U + \overrightarrow U \cdot \nabla \overrightarrow  U 
       \big)- \rho  \nu  \Delta \overrightarrow  U + \nabla P = \overrightarrow  J\times  \overrightarrow  B
       \qquad 
       &\text{balance of linear momentum},
       \\
        \dv \overrightarrow  U = 0\qquad 
       &\text{conservation of mass},\\
        \partial_\tau\overrightarrow B +{\rm curl}\,  \overrightarrow  E = 0
        \qquad 
       &\text{Faraday's law,}\\
        \overrightarrow J = \sigma \big(\overrightarrow E + \overrightarrow U \times \overrightarrow B \big)
         \qquad 
       &\text{Ohm's law,}\\
        \frac{1}{c^2} 
        \partial_\tau \overrightarrow  E + \mu_0 \overrightarrow  J = {\rm curl}\overrightarrow  B 
                \qquad 
       &\text{Ampere's law,}\\
        {\rm div}\, \overrightarrow  B = 0
         \qquad 
       &\text{Gauss's law for magnetism,}\\
        \dv \overrightarrow  E = 0
         \qquad 
       &\text{Gauss's law for electric field.}
    \end{cases}
\end{equation}
The system and the corresponding state variables depends upon $(\tau,X,Y)\in (0,T)\times \mathbb{R}\times \mathbb{R}_+$ (instead of $(t,x,y)$, which are the variables of the boundary layers), for a positive time $T>0$. The following boundary conditions are also prescribed:
\begin{equation*}
    \overrightarrow U(\tau,X,0) = 0\in \mathbb{R}^2, 
    \quad 
    \overrightarrow B(\tau,X,0) = 
    \overrightarrow{\mathbf{B}}\in \mathbb{R}^2
    \quad 
    \text{and}
    \quad 
    E(\tau,X,0) = 
    \mathbf{E}\in \mathbb{R},
\end{equation*}
i.e.~the velocity field satisfies a no-slip boundary condition, whereas the surrounding medium in $(\tau,X,Y)\in (0,T)\times \mathbb{R}\times \{y<0\}$ is an insulator with a prescribed fixed magnetic field $ \overrightarrow{\mathbf{B}}\in \mathbb{R}^2$. 

\noindent
The constants $\rho>0$ and $\nu>0$ are the density of the fluid and the kinematic viscosity, respectively, while $c>0$ stands for the speed of light. The first term $(\rho \nu \mathcal{J}/c^2)  \partial_{\tau}^2 \overrightarrow U$ in the balance of linear momentum is due to the Cattaneo's law \cite{MR3942552,MR2045417,Coulaud,MR4429384,MR2404054,MR3085226} and depends on a general inertial constant $\mathcal J >0$. 
This law develops around a first-order Taylor expansion of a delayed relation on the Cauchy stress tensor
\begin{equation*}
    \mathbb{S}(\tau+ \tau_{\rm rel}, \cdot ) = \nu \frac{\nabla u + \nabla u^T}{2}(\tau,\cdot) 
\end{equation*}
where for us the relaxation time is given by $\tau_{\rm rel} =\rho \nu \mathcal{J}/c^2$. This particular form (in terms of $\mathcal J>0$ and not directly in $\tau_{\rm rel}>0$), will be  important indeed when rescaling our system for the boundary layers.

\noindent
We have denoted by $\sigma>0$ the electrical conductivity, by $\mu_0>0$ the magnetic permeability. 
We have further denoted $\overrightarrow U(\tau,X,Y) = (U_1(\tau,X,Y),U_2(\tau,X,Y))^T \in \mathbb{R}^2$ and 
$\overrightarrow B(\tau,X,Y) = (B_1(\tau,X,Y),B_2(\tau,X,Y))^T \in \mathbb{R}^2$ the velocity field and magnetic field of the media, respectively. The scalar pressure $P(\tau,X,Y)  \in \mathbb{R}$ is the Lagrangian multiplier that ensures the incompressibility of the velocity field. The current density $\overrightarrow  J = (0,0,J(\tau,X,Y))^T$ and the electric field $ \overrightarrow  E = (0,0,E(\tau,X,Y))^T$ are considered as three dimensional vector fields, being perpendicular to the plane in which the fluid motion occurs. Since we are dealing with the two dimensional version of the equations, we shall clarify the employed notation: 
\begin{equation*}
    \overrightarrow  J\times  \overrightarrow  B 
    = J  \overrightarrow   B^{\perp} \!\!= 
    J 
    \begin{pmatrix}
        -B_2\\ \hspace{0.3cm}B_1
    \end{pmatrix}\!\!,
     \;
    \curl  \overrightarrow  E = 
    \begin{pmatrix}
        \hspace{0.25cm}\partial_Y E\\ -\partial_X E
    \end{pmatrix}\!\!,
     \;
     \overrightarrow  U\times  \overrightarrow  B 
     =
     \begin{pmatrix}
        0\\0\\ U_1B_2-U_2B_1
    \end{pmatrix}\!\!,
     \;
    \curl  \overrightarrow  B 
     =
     \begin{pmatrix}
        0\\0\\ \partial_XB_2-\partial_Y B_1
    \end{pmatrix}\!\!.
\end{equation*}
The positive parameters $\nu$, $\mu_0$ and $\varepsilon_0$ correspond to the kinematic viscosity, the magnetic permeability and permittivity of free space, respectively. Furthermore, the parameter $\sigma$ represents the electrical conductivity of the medium. 

\smallskip
\noindent 
Some of the terms in \eqref{eq:Maxwell+Navier-Stokes} are redundant, indeed we can recast the overall system as five equations depending on $ \overrightarrow  U$, $ \overrightarrow  B$ and $ \overrightarrow  E$. First, we formulate the Faraday's law in \eqref{eq:Maxwell+Navier-Stokes} only in terms of the magnetic field $\overrightarrow B$, making use of the Ohm's law \eqref{eq:Maxwell+Navier-Stokes}: 
\begin{equation}\label{eq:first-step-for-erasing-E}
        \partial_\tau \overrightarrow B = - \curl \overrightarrow  E  
        =
        \curl \big(\overrightarrow U\times \overrightarrow B \big) - \frac{1}{\sigma}  \curl \overrightarrow J.
\end{equation}
Furthermore, to get rid of the current density in $\curl \overrightarrow J$, we apply the $\curl$ operator to the Ampere's law:
\begin{equation*}
    \frac{1}{c^2}\partial_\tau (\curl  \overrightarrow  E  ) + \mu_0 \curl  \overrightarrow  J = \curl\curl B,
\end{equation*}
which leads to 
\begin{equation*}
    \curl  \overrightarrow  J = \frac{1}{\mu_0c^2}\partial_{\tau}^2  \overrightarrow  B + 
    \frac{1}{\mu_0}(\nabla \dv  \overrightarrow  B - \Delta B ) = 
    \frac{1}{\mu_0c^2}\partial_{\tau}^2  \overrightarrow  B + \frac{1}{\mu_0}(\nabla \dv  \overrightarrow  B - \Delta B )
\end{equation*}
Thus, we can plug this last relation in equation \eqref{eq:first-step-for-erasing-E}, to finally obtain an hyperbolic form of the Ampere's law in terms of the magnetic field $ \overrightarrow B$:
\begin{equation}\label{eq:Faraday_without_E_J}
    \frac{1}{\sigma\mu_0 c^2}   
    \partial_{\tau}^2  \overrightarrow B + 
    \partial_\tau \overrightarrow B -
    \frac{1}{\sigma \mu_0}\Delta  \overrightarrow B = \curl \big(\overrightarrow U\times \overrightarrow B \big) = 
    \overrightarrow B\cdot \nabla \overrightarrow U -
    \overrightarrow U\cdot \nabla \overrightarrow B.
\end{equation}
Similarly, we can get rid of $\overrightarrow J$ also in the balance of linear momentum in \eqref{eq:Maxwell+Navier-Stokes} through
\begin{equation*}
    \frac{\rho \nu \mathcal{J}}{c^2} \partial_{\tau}^2 \overrightarrow U
    \! +\! 
    \rho (\partial_t \overrightarrow  U + \overrightarrow U \cdot \nabla \overrightarrow  U ) - \nu \Delta \overrightarrow  U + \nabla P = \sigma (\overrightarrow E + \overrightarrow U \times \overrightarrow B )\times  \overrightarrow  B 
    = \sigma \overrightarrow E  \times  \overrightarrow  B+ \sigma (\overrightarrow B(\overrightarrow U\cdot \overrightarrow B) - \overrightarrow U | \overrightarrow B|^2 ).
\end{equation*}
We are now in the condition to reduce the number of equations in  \eqref{eq:Maxwell+Navier-Stokes}. By considering the electric field $\overrightarrow E(\tau,X,Y)  = (0, 0, E(\tau,X,Y))^T $ (whose divergence is always null) and recalling the definition of the vector field $\overrightarrow B^T = (-B_2, B_1)^T$, we finally gather
\begin{equation}\label{eq:Maxwell+Navier-Stokes2}
    \left\{\!
    \begin{alignedat}{4}
        &
        \frac{\rho \nu \mathcal{J}}{c^2}
         \partial_{\tau}^2 \overrightarrow U
        \! +\! 
        \rho (\partial_t \overrightarrow U\! +\! \overrightarrow U \!\cdot \!\nabla \overrightarrow U )\! -\! \rho \nu \Delta \overrightarrow U \!+\! \nabla P \!= \!\sigma (\overrightarrow B(\overrightarrow U\!\cdot\! \overrightarrow B) - \overrightarrow U | \overrightarrow B|^2 ) + \sigma  E \overrightarrow B^T \;  (\tau,X,Y)\in 
		&&  (0,T)\!\times\! \mathbb{R}\!\times\! \mathbb{R}_+,\\
        &\dv \overrightarrow U = 0&&  (0,T)\!\times\! \mathbb{R}\!\times\! \mathbb{R}_+,\\
        &\frac{1}{\sigma \mu_0 c^2}\partial^2_{\tau} \overrightarrow B + \partial_\tau  \overrightarrow B - \frac{1}{\sigma \mu_0}\Delta \overrightarrow B = 
        \overrightarrow B\cdot \nabla \overrightarrow U - \overrightarrow U\cdot \nabla \overrightarrow B	
        && (0,T)\!\times\! \mathbb{R}\!\times\! \mathbb{R}_+,\\
        &\partial_\tau \overrightarrow B +{\rm curl}\, \overrightarrow E =0&&  (0,T)\!\times\! \mathbb{R}\!\times\! \mathbb{R}_+,\\
        &{\rm div}\, \overrightarrow B = 0&&  (0,T)\!\times\! \mathbb{R}\!\times\! \mathbb{R}_+,
    \end{alignedat}
    \right.
\end{equation}
%
with boundary conditions
\begin{equation*}
      \overrightarrow U(\tau,X,0) = 0\in \mathbb{R}^2, 
    \quad 
    \overrightarrow B(\tau,X,0) = 
    \overrightarrow{\mathbf{B}}\in \mathbb{R}^2
    \quad 
    \text{and}
    \quad 
    E(\tau,X,0) =
    \mathbf{E}
    \in \mathbb{R}.
\end{equation*}
Before performing an asymptotic analysis of equations \eqref{eq:Maxwell+Navier-Stokes2} to derive the boundary-layer model \eqref{eq:main-system}, it is reasonable to first recast equations \eqref{eq:Maxwell+Navier-Stokes2} in their dimensionless form.

\subsection{The equations in dimensionless form}\label{sec:navier-stokes-maxwell-dimensionless}$\,$

\noindent 
We shall first briefly recall some dimensionless parameters which are well-known in the magnetohydrodynamic theory. 
We refer to \cite{davidson_2001,priest_2014} for additional details and an exhaustive overview of the underlying physics.

\noindent Throughout this manuscript we denote by $ U_0\in \mathbb{R}_+$ and by $  B_0\in \mathbb{R}_+$ the sizes of the characteristic speed and magnetic field of the fluid, respectively. We further denote by $l_0\in \mathbb{R}_+$ and $t_0\in \mathbb{R}_+$ the underlying length and time scales (thus $U_0 = l_0/t_0$). The ratio between the sizes of the inertial and viscous terms is given by the Reynolds number ${\rm Re} = l_0 U_0/\nu$, while ${\rm Re}_m = l_0 U_0 \sigma \mu_0$ stands for the magnetic Reynolds number and measures the coupling between flow and magnetic field. The Hartmann number ${\rm Ha}=B_0 l_0 \sqrt{\sigma/\rho \nu }$ represents the ratio between the magnetic and viscous forces. 

\smallskip
\noindent 
Hence, we can introduce the change of variables $t' = \tau/t_0\in (0,T')$ (with $T' = T/t_0$), $x' = X/l_0\in \mathbb R$, $y' = Y/l_0 \in \mathbb R_+$ as well as the new state variables $ \overrightarrow U' =  \overrightarrow U/U_0 $, $ \overrightarrow B' =  \overrightarrow B/B_0 $ and $E' = E B_0U_0 $. In these new framework the equations in \eqref{eq:Maxwell+Navier-Stokes2} become
\begin{equation}\label{eq:Maxwell+Navier-Stokes2-dimensionless}
    \left\{
    \begin{alignedat}{2}
        &
        \left(\frac{ U_0}{c}\right)^2
        \frac{\mathcal{J}}{\rm Re}
        \partial_{t'}^2 \overrightarrow U'\!+\! 
        \partial_{t'} \overrightarrow U' \!+\! \overrightarrow U' \!\cdot\! \nabla \overrightarrow U'  \!-\! \frac{1}{{\rm Re}} 
        \Delta \overrightarrow U' \!+\! \nabla P\! 
       = \\
       &
       \hspace{3cm}=
       \!
       \frac{{\rm Ha^2}}{{\rm Re}} \big(\overrightarrow B'
       (\overrightarrow U'\!\cdot \!\overrightarrow B') \!- \!
       \overrightarrow U' | \overrightarrow B'|^2 \! +\!  E' \overrightarrow B'^T \big),
        \;  (t'\!,x'\!,y')\!\in 
		&&  (0,T')\!\times \!\mathbb{R}\!\times \! \mathbb{R}_+,\\
        &\dv \overrightarrow U' = 0,
        &&  (0,T')\!\times \!\mathbb{R}\!\times \! \mathbb{R}_+,\\
        &\left(\frac{U_0}{c}\right)^2\frac{1}{\,{\rm Re_m}}\partial^2_{t't'} \overrightarrow B' + \partial_{t'}  \overrightarrow B' -\frac{1}{\,{\rm Re_m}} \Delta \overrightarrow B' = 
        \overrightarrow B'\cdot \nabla \overrightarrow U' - \overrightarrow U'\cdot \nabla \overrightarrow B',
        &&  (0,T')\!\times \!\mathbb{R}\!\times \! \mathbb{R}_+,\\
        &\partial_{t'} \overrightarrow B' +{\rm curl}\, \overrightarrow E' =0,
        &&  (0,T')\!\times \!\mathbb{R}\!\times \! \mathbb{R}_+,\\
        &{\rm div}\, \overrightarrow B' = 0,&&  (0,T')\!\times \!\mathbb{R}\!\times \! \mathbb{R}_+,
    \end{alignedat}
    \right.
\end{equation}
with boundary conditions
\begin{equation}\label{initial-data-dimensionless-maxwell-navier-stokes}
      \overrightarrow U'(t',x',0) = 0\in \mathbb{R}^2, 
    \quad 
    \overrightarrow B'(t',x',0) = 
    \overrightarrow{\mathbf{B}}' 
    :=  \frac{\overrightarrow{\mathbf{B}}}{B_0}\in \mathbb{R}^2
    \quad 
    \text{and}
    \quad 
    E'(t,x,0) = 
    \mathbf{E}':=
    \mathbf{E}B_0U_0
    \in \mathbb{R},
\end{equation}
where all spatial derivatives $\nabla$, $\Delta$ and $\dv$ are now in terms of $(x',y')$. The behaviour of solutions of the system is therefore quantified by the Reynolds number ${\rm Re}$ and the Hartmann number in the momentum equation, as well as by the characteristic speed $U_0$ and the magnetic Reynolds number ${\rm Re_m}$ in the third equation. 
\begin{rem}\label{rem:differences-between-MHD-Prandtl-and-ours}
    Let us comment on a major difference between the parameters of the Navier-Stokes-Maxwell equations given by \eqref{eq:Maxwell+Navier-Stokes2-dimensionless} and classic MHD system (i.e.~when $c\sim+\infty$).
    The forcing term on the right-hand side of the balance of linear momentum in \eqref{eq:Maxwell+Navier-Stokes2-dimensionless} is driven by the dimensionless constant ${\rm Ha}^2/{\rm Re}$. At a first glance, this constant differs from ${\rm Ha}^2/({\rm Re}\,{\rm Re_m})$ in the linear momentum of MHD, i.e.
    \begin{equation*}
        \partial_{t'} \overrightarrow U' + \overrightarrow U' \cdot \nabla \overrightarrow U'  - \frac{1}{{\rm Re}} 
        \Delta \overrightarrow U' + \nabla P 
        =\frac{{\rm Ha^2}}{{\rm Re}\,{\rm Re_m}} 
        \Big(
            B' \cdot \nabla B' - \nabla \frac{|B'|^2}{2}
        \Big) 
        = \frac{{\rm Ha^2}}{{\rm Re}\,{\rm Re_m}} \curl B'\times B' ,
    \end{equation*}
    This observation is however imprecise. Indeed, by neglecting $1/c^2\partial_t E$ in \eqref{eq:Maxwell+Navier-Stokes} (thus also $U_0^2/(c^2{\rm Re_m})\partial^2_{t't'} \overrightarrow B'$ in the third equation of \eqref{eq:Maxwell+Navier-Stokes2-dimensionless}), the Ohm's law together with the Ampere's law imply
    \begin{equation*}
        \curl B'\times B' = \mu_0  l_0 U_0 \sigma \big( U'\times B' + E' \big) \wedge B' = 
        {\rm Re_m}
        \big(\overrightarrow B'(\overrightarrow U'\cdot \overrightarrow B') - \overrightarrow U' | \overrightarrow B'|^2  +  E' \overrightarrow B'^T \big),
    \end{equation*}
    which provides the additional constant $1/{\rm Re_m}$ to gather ${\rm Ha}^2/({\rm Re}\,{\rm Re_m})$. System \eqref{eq:Maxwell+Navier-Stokes2-dimensionless} is therefore an extension of the MHD-equations in dimensionless form.
\end{rem}

\noindent 
In the forthcoming sections, we aim to reveal system \eqref{eq:main-system}, by sending ${\rm Re}$, ${\rm Re_m}$ and $\rm Ha$ towards $\infty$ in \eqref{eq:Maxwell+Navier-Stokes2-dimensionless}. To this end, we consider suitable conditions on $U_0$ and ${\rm Ha}$, as well as some valid rescalings of the variables $(t',x',y')$ near the boundary.  In particular, in section \ref{sec:modelling-Prandtl} we derive system \eqref{eq:main-system} as boundary layer with thickness of Prandtl type, while in section   \ref{sec:modelling-Prandtl} the equations are revealed as a layer with thickness of Hartmann type.

\subsection{Insurgence of Boundary layers with thickness of Prandtl type}\label{sec:modelling-Prandtl}$\,$

\noindent 
The goal of this section is to prove Theorem \ref{thm:main-modelling}, analysing the asymptotic limit of equations \eqref{eq:Maxwell+Navier-Stokes2-dimensionless} on a thin layer near the boundary, under suitable assumptions on the dimensionless parameters. Boundary layers commonly appear in fluid dynamics at high values of the Reynolds number ${\rm Re} \gg 1$, however we shall also here clarify the asymptotic of the magnetic Reynolds number ${\rm Re}_m$ and the Hartmann number {\rm Ha}. 
We here assume that the ratio between the magnetic Reynolds number and the Reynolds number (known as magnetic Prandtl number) is fixed
\begin{equation*}
    \frac{{\rm Re_m}}{{\rm Re}} = {\rm Pr_m}>0.
\end{equation*}
thus also the magnetic Reynolds number assumes high values ${\rm Re_m} \gg 1$. Similarly, we address the case in which the Hartmann number {\rm Ha} diverges to $\infty$ proportionally to ${\rm Re}$.

\noindent
We can then summarise the statement of Theorem \ref{thm:main-modelling} as follows. 

\begin{theorem}\label{thm:main-modelling-sec2}
    Assume that the following relations between the dimensionless parameters in system \eqref{eq:Maxwell+Navier-Stokes2-dimensionless} are satisfied:
    \begin{equation}\label{conditions-Ha-U0-Prandtl}
        \lim_{{\rm Re}\to +\infty} \frac{{\rm Ha}}{{\rm Re}}= \mathbb{H}\in \mathbb{R},
        \quad  
         \frac{{\rm Re_m}}{{\rm Re}} = {\rm Pr_m}\text{ is fixed,}
        \quad \lim_{{\rm Re}\to +\infty} 
        \left( \frac{U_0}{c}\right)^2 \frac{1}{{\rm Re}} = \kappa \in \mathbb{R},
        \quad 
         \mathbb J := \kappa \mathcal{J}\text{ is fixed}.
    \end{equation}
    Furthermore, assume that the initial data $ \mathbf{B}' = (\mathbf{B}_1',\mathbf{B}_2')^T\in \mathbb R^2$ and $\mathbf{E}'\in \mathbb R$ in \eqref{initial-data-dimensionless-maxwell-navier-stokes} are such that 
    \begin{equation*}
       \mathbf{B}_1'= \mathbf{b}_1\in \mathbb{R} \text{ is fixed, while}\quad 
       \mathbf{B}_2' = \frac{1}{\sqrt{{\rm Re}}} \mathbf{b}_2,\quad 
       \mathbf{E}' = \frac{1}{\sqrt{{\rm Re}}} \mathbf{e},
    \end{equation*}
    for some $ \mathbf{b}_2\in \mathbb{R}$ and $ \mathbf{e}\in \mathbb{R}$.
    Then system \eqref{eq:main-system} appears as boundary layer in the region
    \begin{equation}\label{region-close-to-the-boundary}
        (t',x',y') \in \mathbb{R}_+ \times \mathbb{R}\times  \left[0,\frac{1}{\sqrt{\rm Re}}\right],
    \end{equation}
    when ${\rm Re}\to +\infty$ (and thus also when ${\rm Re}$, ${\rm Ha}\to +\infty$).
\end{theorem}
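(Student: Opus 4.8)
The plan is to carry out a formal boundary-layer (matched asymptotic) analysis of the dimensionless system \eqref{eq:Maxwell+Navier-Stokes2-dimensionless}, introducing the Prandtl thickness $\delta := {\rm Re}^{-1/2}$ and searching for solutions supported in the thin region \eqref{region-close-to-the-boundary} that obey an anisotropic ansatz dictated by the incompressibility constraints and by the scalings of the boundary data. Concretely, I would set the stretched variables $t=t'$, $x=x'$, $y=y'/\delta$ (so that the physical layer $y'\in(0,\delta)$ is mapped to the order-one slab $y\in(0,1)$) and postulate $\overrightarrow U' = (u,\delta v)$, $\overrightarrow B' = (b_1,\delta b_2)$, $E' = \delta e$, $P = p$, where the profiles $(u,v,b_1,b_2,e,p)$ are evaluated at $(t,x,y)$. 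The powers of $\delta$ on the normal components $v$, $b_2$ and on $e$ are exactly those that keep $\dv\,\overrightarrow U' = \partial_x u + \partial_y v$ and $\dv\,\overrightarrow B' = \partial_x b_1 + \partial_y b_2$ of order one, and they are consistent with the prescribed boundary values $\mathbf{B}_2' = \delta\mathbf{b}_2$, $\mathbf{E}' = \delta\mathbf{e}$ while $\mathbf{B}_1' = \mathbf{b}_1$ stays of order one.

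Next I would substitute this ansatz into each equation of \eqref{eq:Maxwell+Navier-Stokes2-dimensionless}, using $\partial_{y'} = \delta^{-1}\partial_y$, and group the resulting terms by their power of $\delta$ (equivalently of ${\rm Re}$). In the tangential momentum equation the vertical viscous term $-{\rm Re}^{-1}\partial_{y'}^2 u = -{\rm Re}^{-1}\delta^{-2}\partial_y^2 u$ collapses to $-\partial_y^2 u$ because ${\rm Re}^{-1}\delta^{-2}=1$, the Cattaneo term tends to $\mathbb J\partial_t^2 u$ through $(U_0/c)^2\mathcal J/{\rm Re}\to\kappa\mathcal J=\mathbb J$, and the first Lorentz component carries the prefactor $({\rm Ha}^2/{\rm Re})\,\delta^2 = ({\rm Ha}/{\rm Re})^2\to\mathbb H^2$, yielding $\mathbb H^2(b_1b_2v-ub_2^2-b_2e)$; the horizontal diffusion is $O({\rm Re}^{-1})$ and drops out. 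An identical bookkeeping in the $\overrightarrow B'$ equation (using $1/{\rm Re_m}=1/({\rm Pr_m}{\rm Re})$, so that $(U_0/c)^2/{\rm Re_m}\to\kappa/{\rm Pr_m}$ and ${\rm Re_m}^{-1}\delta^{-2}={\rm Pr_m}^{-1}$) and in Faraday's law reproduces, component by component, the third, fourth and fifth lines of \eqref{eq:main-system}, after moving the transport terms $u\partial_x b_i + v\partial_y b_i$ from the right-hand side to the left.

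The delicate point is the normal momentum equation, which fixes the pressure and is the only place where the anisotropy is genuinely needed. Here $\partial_{y'}P = \delta^{-1}\partial_y p$ is of order ${\rm Re}^{1/2}$, and I would check that the matching Lorentz component carries precisely the prefactor $({\rm Ha}^2/{\rm Re})\,\delta = \mathbb H^2\,{\rm Re}^{1/2}(1+o(1))$, so that dividing through by ${\rm Re}^{1/2}$ leaves the balance $\partial_y p = \mathbb H^2(b_1b_2u - b_1^2 v + b_1 e)$, the second line of \eqref{eq:main-system}. The main work is to verify that, after normalising by ${\rm Re}^{1/2}$, every competing term in this equation — the local acceleration $\partial_{t'}(\delta v)$, the convective and viscous contributions, and the Cattaneo term $\mathbb J\delta\partial_t^2 v$ — is of order ${\rm Re}^{-1}$ and hence vanishes in the limit; this is exactly where the $O(\delta)$ size of $v$, $b_2$ and $e$ is indispensable, since an isotropic scaling would unbalance either the tangential or the normal equation.

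Finally I would record how the data transform: the no-slip condition gives $u=v=0$ at $y=0$, while $\overrightarrow B'(\cdot,0)=(\mathbf b_1,\delta\mathbf b_2)$ and $E'(\cdot,0)=\delta\mathbf e$ translate, after dividing the normal components by $\delta$, into $(b_1,b_2,e)|_{y=0}=(\mathbf b_1,\mathbf b_2,\mathbf e)$, matching \eqref{boundary-condition-our-main-eq}. Collecting the leading-order balances of all five equations then exhibits \eqref{eq:main-system} as the ${\rm Re}\to\infty$ boundary-layer limit of \eqref{eq:Maxwell+Navier-Stokes2-dimensionless} in the region \eqref{region-close-to-the-boundary}, which is the assertion. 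I expect no genuine obstruction beyond careful tracking of the $\delta$-powers; the only subtlety worth stressing is the consistency of the single pressure field across the anisotropic tangential and normal balances.
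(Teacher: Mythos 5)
Your proposal is correct and follows essentially the same route as the paper: the same Prandtl scaling $\delta={\rm Re}^{-1/2}$, the same anisotropic ansatz $U_2'=\delta v$, $B_2'=\delta b_2$, $E'=\delta e$ with stretched variable $y=y'/\delta$, and the same bookkeeping of $\delta$-powers, with your ``divide the normal momentum balance by ${\rm Re}^{1/2}$'' being exactly the paper's ``multiply the second equation by $\varepsilon$.'' If anything, you are slightly more careful than the paper in explicitly verifying that the Cattaneo and viscous contributions to the normal momentum equation vanish after normalisation.
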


\begin{rem}
    Before addressing the proof of this Theorem, we shall first clarify certain aspects and terminologies of its statement. 
    \begin{itemize}
            \item The third condition in \eqref{conditions-Ha-U0-Prandtl} is rather unphysical, since it implies that the characteristic speed $U_0$ converges towards $\infty$ for high values of ${\rm Re}\gg 1$. This relation seems however necessary in order to avoid loosing contribution from the equations of $(b_1,b_2)$. A more physical scenario is addressed in Section \ref{sec:Hartmann}.
    \item 
        Although the boundary layer appears in the region given by \eqref{region-close-to-the-boundary}, 
        System \eqref{eq:main-system} is written in terms of rescaled variables $(t, x,y)$ in \eqref{Prandtl-change-of-variables} and rescaled functions $(u,b_1,b_2,p,e)$ in \eqref{new-variables-asymptotic}. Therefore, the domain of system \eqref{eq:main-system} does not depend on the Reynolds number: $(t, x,y)\in \mathbb{R}_+ \times \mathbb{R} \times [0,1]$.
    \item By saying that ``System \eqref{eq:main-system} appears as boundary layer'', we mean that as long as the triple $(\overrightarrow U', \overrightarrow B', E')$ (which depends on $\rm Re$) converges towards a profile $((u,0)^T,(b_1,b_2)^T,e)$ when ${\rm Re} \gg 1$ (under a suitable rescaling), then $(u,b_1,b_2,e)$ must satisfy system \eqref{eq:main-system}. The convergence is well known in the purely hydrodynamic regime $({\rm Ha} = 0)$, a fact that have been highly studied through the stability theory of the Prandtl equations. We infer that this convergence holds true also when ${\rm Ha} \neq 0$, but that is beyond the scope of this paper.
    \end{itemize}
\end{rem}

\begin{proof}
We denote by $\varepsilon^2= 1/{\rm Re}$ the inverse of the Reynolds number, which converges towards $0$ when ${\rm Re}$ converges to $\infty$. The parameter $\varepsilon\ll 1$ represents the size of the region in which the boundary layer occurs.
We derive System \eqref{eq:main-system} as a rescaled version of the asymptotic limit of \eqref{eq:Maxwell+Navier-Stokes2} within the domain  $(t,x,y) \in [0,T] \times \mathbb{R}\times  [0,\varepsilon] $, by (informally) sending $\varepsilon$ towards $0$. 
To this end, we shall first introduce the change of variables
\begin{equation}\label{Prandtl-change-of-variables}
        t = t'\in [0,T],\quad  
        x = x'\in \mathbb{R},\quad \text{and}\quad  
        y = y/\varepsilon \in [0,1],
\end{equation}
as well as the following new state variables
\begin{equation}\label{new-variables-asymptotic}
\begin{alignedat}{64}
    &u(t, x,y) 
    &&:= 
    U_1(t , x,\varepsilon y),
    \quad
    &&&&
    v(t, x,y) 
    &&&&&&&&
    := 
    U_2(t , x,\varepsilon y)/ \varepsilon,
    \quad
    &&&&&&&&&&&&&&&&
    b_1(t, x,y)    
    &&&&&&&&&&&&&&&&
    := 
    B_1(t , x,\varepsilon y),\\
    &b_2(t, x,y) 
    &&:= 
    B_2(t , x,\varepsilon y)/ \varepsilon,
    \quad 
    &&&&
    p(t, x, y) 
    &&&&&&&&
    :=  P(t , x,\varepsilon y),\qquad 
    &&&&&&&&&&&&&&&&
    e(t, x, y) 
    &&&&&&&&&&&&&&&&
    :=
     E(t , x,\varepsilon y)/\varepsilon.
\end{alignedat}
\end{equation}
Hence, we can develop System \eqref{eq:Maxwell+Navier-Stokes2-dimensionless} in terms of $u$ $v$ $e$ and $p$, as well as the variables $(t,x,y)$:
\begin{equation*}
    \begin{cases}
         \left(\frac{U_0}{c}\right)^2
         \frac{\varepsilon^2}{\kappa}
         \mathbb J
         \,
         \partial_t^2 u + 
         \partial_t   u + u \partial_x u + v \partial_y u  - \varepsilon^2  \partial_{x}^2 u -   \partial_{y}^2 u + \partial_x p = 
         {\rm Ha^2} \varepsilon^4  
         \big(b_1  b_2 v  - u  b_2^2 - b_2e\big),\\
         \varepsilon (\partial_t  v + u \partial_x v + v \partial_y v )- 
         \varepsilon^3  \partial_{x}^2v+
         \frac{1}{\varepsilon}\partial_y p = 
        {\rm Ha^2} \varepsilon^3  
        \big( b_2b_1 u - b_1^2 v + b_1 e \big),\\
        \partial_x u + \partial_y v = 0,\\
        \left(\frac{U_0}{c}\right)^2
        \frac{\varepsilon^2}{ {\rm Pr_m}}
        \partial_{t}^2 b_1 + \partial_t b_1 - \frac{\varepsilon^2}{{\rm Pr}_m} \partial_{x}^2 b_1 - \frac{1}{{\rm Pr}_m}\partial_{y}^2 b_1 = b_1 \partial_x u + b_2 \partial_y u  - u \partial_x b_1 - v \partial_y b_1,\\
        \left(\frac{U_0}{c}\right)^2
        \frac{\varepsilon^3}{ {\rm Pr_m}}
        \partial_{t}^2 b_2 + \varepsilon \partial_t b_2 - \frac{\varepsilon^3}{{\rm Pr}_m} \partial_{x}^2 b_2 - \frac{\varepsilon}{{\rm Pr}_m}\partial_{y}^2 b_2 = \varepsilon b_1 \partial_x v + \varepsilon b_2 \partial_y v  - \varepsilon u \partial_x b_2 - \varepsilon v \partial_y b_2,\\
        \partial_t b_1 + \partial_y e = 0,\\
        \varepsilon(\partial_t b_2 - \partial_x e) = 0,\\
        \partial_x b_1 + \partial_y b_2 = 0.\\
    \end{cases}
    \!
\end{equation*}
with boundary conditions
\begin{equation*}
    u(t ,x,0) = 0, 
    \quad 
    v(t ,x,0) = 0,
    \quad 
    b_1(t ,x,0) = 
    \mathbf{b}_1',
    \quad 
    b_2(t ,x,0) = 
    \mathbf{b}_2 
    \quad 
    \text{and}
    \quad 
    e(t ,x,0) =
    \mathbf{e}
    \in \mathbb{R}.
\end{equation*}
We now remark that the conditions on \eqref{conditions-Ha-U0-Prandtl} implies that ${\rm Ha}^2\varepsilon^4\to \mathbb H^2$ and $(U_0/c)^2\varepsilon^2 = (U_0/c)^2/{\rm Re} \to \kappa$ as $\varepsilon \to 0$. System \eqref{eq:main-system} appears therefore by multiplying the second equation in \eqref{eq:Maxwell+Navier-Stokes2-dimensionless} by $\varepsilon$, dividing the fifth equation by $\varepsilon$ and finally sending $\varepsilon$ towards $0$.
\end{proof}

\subsection{Boundary layers with thickness of Hartmann type}\label{sec:Hartmann}$\,$

\noindent 
In the modelling of Section \ref{sec:modelling-Prandtl}, we have imposed that the characteristic velocity $U_0$ blows up at high value of the Reynolds number (cf.~the third relation in \eqref{conditions-Ha-U0-Prandtl}). Being this a major drawback on the physics of the system, we can adjust this nonphysical scenario, by considering a different type of rescaling than the one introduced in \eqref{Prandtl-change-of-variables} and \eqref{new-variables-asymptotic}. The corresponding thickness of the boundary layer will now be of Hartmann type, since it is inversely proportional to {\rm Ha} (cf.~\eqref{new-variables-asymptotic-Hartmann} and \eqref{new-functions-Hartmann}). Since {\rm Ha} and {\rm Re} are also in this section proportional, 
the thickness behaves like $1/{\rm Re}$. This is much smaller than $1/\sqrt{\rm Re}$ as in \eqref{region-close-to-the-boundary}, so the nature of this thickness reminds the one of Hartmann layers (of course we have slightly abused the notation about Hartmann, since Hartmann layers usually occur when the magnetic field is oriented at some specific angles, whereas our modelling treat more general scenario).

\noindent 
Although we introduce a different scaling, the derived equations remain the same as in System \eqref{eq:main-system}. We exploit this aspect in the following statement.

\begin{theorem}\label{thm:main-modelling2Hartmann}
    Assume that the following relations between the dimensionless parameters in system \eqref{eq:Maxwell+Navier-Stokes2-dimensionless} are satisfied:
    \begin{equation}\label{conditions-Ha-U0-Hartmann}
        \lim_{{\rm Re}\to +\infty} \frac{{\rm Ha}}{{\rm Re}}= \mathbb{H}\in \mathbb{R} 
        \quad  
        \text{and}
        \quad
        {\rm Pr_m}:=\frac{{\rm Re_m}}{{\rm Re}},
        \;  \kappa:= \left( \frac{U_0}{c}\right)^2,
        \;
         \mathbb J := \kappa \mathcal{J}\text{ are all fixed}.
    \end{equation}
    Furthermore, assume that the initial data $ \mathbf{B}' = (\mathbf{B}_1',\mathbf{B}_2')^T$ in \eqref{initial-data-dimensionless-maxwell-navier-stokes} are such that 
    \begin{equation*}
       \mathbf{B}_2'= \mathbf{b}_2\in\mathbb R  \text{ is fixed, while} 
       \quad  
       \mathbf{B}_1' = \sqrt{\frac{\rm Ha}{\mathbb H}} \mathbf{b}_1,
       \quad 
       \mathbf{E}' =  \sqrt{\frac{\rm Ha}{\mathbb H}} \mathbf{e},
    \end{equation*}
    for some $ \mathbf{b}_1\in \mathbb{R}$ and $ \mathbf{e}\in \mathbb{R}$. Then system \eqref{eq:main-system} appears as boundary layer in the region
    \begin{equation}\label{region-close-to-the-boundary-Hartmann}
        (t',x',y') \in \left(0, \frac{\mathbb H}{\rm Ha}T'\right) \times \mathbb{R}\times  \left(0,\frac{\mathbb H}{\rm Ha}
        \right),
    \end{equation}
    when ${\rm Re}\to +\infty$ (and thus also when ${\rm Re}$, ${\rm Ha}\to +\infty$).
\end{theorem}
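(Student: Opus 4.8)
The plan is to repeat the strategy of the proof of Theorem \ref{thm:main-modelling-sec2}, only replacing the Prandtl rescaling \eqref{Prandtl-change-of-variables}--\eqref{new-variables-asymptotic} by an anisotropic, faster-contracting one whose vertical width is of Hartmann order. I introduce the small parameter $\delta:=\mathbb{H}/{\rm Ha}$, which under the hypotheses \eqref{conditions-Ha-U0-Hartmann} tends to $0$ as ${\rm Re}\to+\infty$ (since ${\rm Ha}/{\rm Re}\to\mathbb{H}$ forces ${\rm Ha}\to\infty$ proportionally to ${\rm Re}$). All the dimensionless groups are then read off asymptotically in terms of $\delta$ alone:
\begin{equation*}
    {\rm Ha}=\frac{\mathbb{H}}{\delta},\qquad \frac{1}{{\rm Re}}\sim\delta,\qquad \frac{{\rm Ha}^2}{{\rm Re}}={\rm Ha}\,\frac{{\rm Ha}}{{\rm Re}}\sim\frac{\mathbb{H}^2}{\delta},\qquad \frac{1}{{\rm Re_m}}=\frac{1}{{\rm Pr_m}{\rm Re}}\sim\frac{\delta}{{\rm Pr_m}},
\end{equation*}
and this is the only place where \eqref{conditions-Ha-U0-Hartmann} enters, recalling that $\kappa=(U_0/c)^2$, $\mathbb{J}=\kappa\mathcal{J}$ and ${\rm Pr_m}$ are now held fixed.

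First I would introduce the change of variables
\begin{equation}\label{new-variables-asymptotic-Hartmann}
    t=\frac{t'}{\delta}\in(0,T'),\qquad x=\frac{x'}{\sqrt{\delta}}\in\mathbb{R},\qquad y=\frac{y'}{\delta}\in(0,1),\qquad \delta:=\frac{\mathbb{H}}{{\rm Ha}},
\end{equation}
which maps the Hartmann region \eqref{region-close-to-the-boundary-Hartmann} onto the fixed slab $(0,T')\times\mathbb{R}\times(0,1)$, together with the rescaled unknowns
\begin{equation}\label{new-functions-Hartmann}
    u:=\sqrt{\delta}\,U_1',\quad v:=U_2',\quad b_1:=\sqrt{\delta}\,B_1',\quad b_2:=B_2',\quad e:=\sqrt{\delta}\,E',\quad p:=\delta\,P,
\end{equation}
each right-hand side being evaluated at $(t',x',y')=(\delta t,\sqrt{\delta}\,x,\delta y)$. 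The half-power amplitudes on $(U_1',B_1',E')$ are not free: they are dictated by the prescribed data $\mathbf{B}_1'=\sqrt{{\rm Ha}/\mathbb{H}}\,\mathbf{b}_1=\mathbf{b}_1/\sqrt{\delta}$ and $\mathbf{E}'=\mathbf{e}/\sqrt{\delta}$ in \eqref{initial-data-dimensionless-maxwell-navier-stokes}, so that the boundary traces collapse exactly to $(u,b_1,b_2,e)|_{y=0}=(0,\mathbf{b}_1,\mathbf{b}_2,\mathbf{e})$, while the no-slip condition $\overrightarrow U'|_{y'=0}=0$ yields $(u,v)|_{y=0}=0$.

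Next I would substitute \eqref{new-variables-asymptotic-Hartmann}--\eqref{new-functions-Hartmann} into the five equations of \eqref{eq:Maxwell+Navier-Stokes2-dimensionless} and normalise each of them so that the coefficient of its first-order time derivative becomes $1$ (this amounts to multiplying the $u$- and $b_1$-equations by $\delta^{3/2}$ and the $b_2$-equation by $\delta$), then let $\delta\to0$. The bookkeeping that must be checked is that the convective terms, the vertical diffusions $\partial_y^2$, and the Cattaneo/displacement terms $(\kappa\mathcal{J}/{\rm Re})\partial_{t'}^2$ and $(\kappa/{\rm Re_m})\partial_{t'}^2$ all retain an $O(1)$ coefficient (respectively $1$, $1/{\rm Pr_m}$, $\mathbb{J}$ and $\kappa/{\rm Pr_m}$), while the horizontal diffusion $\partial_x^2$ picks up the spurious factor $\delta^2/\lambda^2=\delta$ with $\lambda=\sqrt{\delta}$, and hence disappears. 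The forcing is where the hypotheses truly bite: the product $B_2'(B_1'U_2'-U_1'B_2'-E')$ carries a factor $\delta^{-1/2}$ from the amplitudes, which against ${\rm Ha}^2/{\rm Re}\sim\mathbb{H}^2/\delta$ and the normalisation $\delta^{3/2}$ produces exactly $\mathbb{H}^2(b_1b_2v-ub_2^2-b_2e)$. The vertical momentum equation is treated as in the Prandtl case: after multiplication by $\delta^2$ only the pressure gradient $\partial_y p$ and the forcing $\mathbb{H}^2(b_1b_2u-b_1^2v+b_1e)$ survive, every inertial and viscous term being $o(1)$. Finally, the two incompressibility constraints and the two components of Faraday's law are scale covariant under \eqref{new-variables-asymptotic-Hartmann}--\eqref{new-functions-Hartmann}: each reduces to a common prefactor times $\partial_x u+\partial_y v$, $\partial_x b_1+\partial_y b_2$, $\partial_t b_1+\partial_y e$ and $\partial_t b_2-\partial_x e$, and therefore passes to the limit unchanged. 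Collecting the surviving terms reproduces System \eqref{eq:main-system} on the fixed domain $(0,T')\times\mathbb{R}\times(0,1)$.

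The main obstacle I anticipate is not any single estimate but pinning down the correct anisotropic ansatz and checking its mutual consistency: the nonstandard horizontal contraction $x'=\sqrt{\delta}\,x$ together with the half-power amplitudes $\sqrt{\delta}$ on $(U_1',B_1',E')$ must be chosen so that, simultaneously, (i) they agree with the prescribed data scalings $\sqrt{{\rm Ha}/\mathbb{H}}$, (ii) all forcing monomials collapse onto the single coefficient $\mathbb{H}^2$, and (iii) the $\partial_x^2$-diffusion vanishes in the limit while the $\partial_y^2$-diffusion persists. Once \eqref{new-variables-asymptotic-Hartmann}--\eqref{new-functions-Hartmann} are fixed, the remaining work is the same routine term-by-term matching already performed for Theorem \ref{thm:main-modelling-sec2}.
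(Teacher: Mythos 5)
Your proposal is correct and follows essentially the same route as the paper's proof: the same $\delta=\mathbb{H}/{\rm Ha}$, the same anisotropic rescaling $t=t'/\delta$, $x=x'/\sqrt{\delta}$, $y=y'/\delta$ with amplitudes $(\sqrt{\delta}U_1', U_2', \sqrt{\delta}B_1', B_2', \sqrt{\delta}E', \delta P')$, and the same normalisation by $\delta^{3/2}$ (resp.\ $\delta$) before letting $\delta\to 0$. The only cosmetic difference is that you multiply the vertical momentum equation by $\delta^{2}$ rather than the paper's $\delta$, which makes the surviving balance $\partial_y p=\mathbb{H}^2(b_1b_2u-b_1^2v+b_1e)$ appear at order $O(1)$ instead of as the leading $O(\delta^{-1})$ term — an equivalent and arguably cleaner bookkeeping.
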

\begin{rem}
Some remarks are here in order:
\begin{itemize}
    \item We do not impose any condition on $U_0$ in terms of the Reynolds number, therefore this characteristic speed can range within physical values below the speed of light. 
    \item The interval $ [0, \mathbb H/{\rm Ha}]$ in \eqref{region-close-to-the-boundary-Hartmann} can be replaced by $ [0, 1/{\rm Re}]$, since our assumptions ensure that $ \mathbb H/{\rm Ha}\approx 1/{\rm Re}$ when $  {\rm Re}\gg 1 $. \item The domain in \eqref{region-close-to-the-boundary-Hartmann} represents a region close to the boundary of the domain $(t',x',y')\in \mathbb R\times\mathbb{R}^2_+$. From the scaling of the the new variables (cf.~\eqref{new-variables-asymptotic-Hartmann}) the domain of model \eqref{eq:main-system} shall be better understood as an asymptotic expansion of a different domain, namely $(t',x',y')\in [0, T/{\rm Ha}]\times [-\sqrt{X_0/ {\rm Ha}},  \sqrt{X_0/ {\rm Ha}}] \times  [0, \mathbb H/{\rm Ha}] $. In particular, the singular behaviour of the solutions given by \eqref{eq:main-system} appears close to the boundary of the domain $\mathbb R\times\mathbb{R}^2_+$ and close to the origin both in time $t'=0$ and in space $x'= 0$. This particular region of the domain is motivated by the fact that also the term $ ( U_0/c)^21/{\rm Re_m }\partial_{t't'}^2 \overrightarrow B'$ of the displacement current in \eqref{eq:Maxwell+Navier-Stokes2-dimensionless} is now vanishing. As a result, when ${\rm Re\gg 1}$, the limit system of \eqref{eq:Maxwell+Navier-Stokes2-dimensionless} requires different boundary conditions both in space (i.e.~in $y'= 0$) as well as in time  (i.e.~in $t'= 0$).
\end{itemize}
\end{rem}
\begin{proof}[Proof of Theorem \ref{thm:main-modelling2Hartmann}]
We denote by $\delta = \mathbb H/{\rm Ha}\ll 1$ the size of the region in which boundary layer occurs. We thus introduce the change of variables
\begin{equation}\label{new-variables-asymptotic-Hartmann}
    t= \frac{t'}{\delta }\in  \mathbb R_+,\qquad 
    x = \frac{x'}{\sqrt{\delta }} \in  \mathbb R,\qquad 
    \quad
    y = \frac{y'}{\delta} \in [0,1]
\end{equation}
and the functions
\begin{equation}\label{new-functions-Hartmann}
\begin{alignedat}{64}
    &u(t, x,y) 
    &&:= 
    \sqrt{\delta}\,
    U_1'(\delta t ,\sqrt{\delta} x,\delta y),
     \;
    &&&&
    v(t, x,y) 
    &&&&&&&&
    := 
    U_2'(\delta T ,\sqrt{\delta} x,\delta y),
    \;
    &&&&&&&&&&&&&&&&
    b_1(t, x,y)    
    &&&&&&&&&&&&&&&&
    := 
    \sqrt{\delta}
    B_1'(\delta t , \sqrt{\delta} x,\delta y),\\
    &b_2(\tau, X,Y) 
    &&:= 
    B_2'(\delta t ,\sqrt{\delta} x,\delta y),
    \quad 
    &&&&
    p(t, x, y) 
    &&&&&&&&
    := \delta  P'(\delta  t, \sqrt{\delta}x, \delta y),
    \hspace{0.2cm} 
    &&&&&&&&&&&&&&&&
    e(t, x, y) 
    &&&&&&&&&&&&&&&&
    :=
    \sqrt{\delta} E'(\delta t, \sqrt{\delta} x, \delta y).
\end{alignedat}
\end{equation}
Hence, by recasting system \eqref{eq:Maxwell+Navier-Stokes2} in terms of the new functions and variables, we deduce that $(u,v)$, $(b_1,b_2)$ and $e$ are solutions of
\begin{equation}\label{eq:final-hartmann}
    \begin{cases}
         \delta^{-\frac{3}{2}}
         \big(
         \mathbb J \frac{{\rm Ha}}{\mathbb H {\rm Re}}\partial_{t}^2 u+
         \partial_t   u + u \partial_x u + v \partial_y u\big)  - \frac{{\rm Ha}}{{\rm Re}\mathbb H}\delta^{-\frac 12}  \partial_{x}^2 u - 
         \frac{{\rm Ha}}{{\rm Re}\mathbb H}\delta^{-\frac 32 } \partial_{y}^2 u + \delta^{-\frac 32 }\partial_x p =\\
         \hspace{10cm}= 
         \frac{\rm Ha^2}{\rm Re}  \delta^{-\frac 12}  
         \big(b_1  b_2 v  - u  b_2^2 - b_2e\big),\\
         \delta^{-1}(\partial_t  v + u \partial_x v + v \partial_y v )- 
          \frac{{\rm Ha}}{{\rm Re}\mathbb H}
          \partial_{x}^2 v - 
          \frac{{\rm Ha}}{{\rm Re}\mathbb H}\delta^{-1}\partial_{y}^2 v  +  
          \delta^{-2}
         \partial_y p = 
        \frac{\rm Ha^2}{\rm Re}\delta^{-1}
        \big( b_2b_1 u - b_1^2 v + b_1 e \big),\\
        \delta^{-1}(\partial_x u + \partial_y v) = 0,\\
        \left(\frac{U_0}{c}\right)^2
        \frac{{\rm Ha}}{{\rm Re}\mathbb H}
        \frac{ \delta^{-\frac 32}}{ {\rm Pr_m}}
        \partial_{t}^2 b_1 
        \!+\!  
        \delta^{-\frac 32}\partial_t b_1 
        \!-\! 
        \frac{ \delta^{-\frac 12}}{{\rm Pr}_m}
        \frac{{\rm Ha}}{{\rm Re}\mathbb H}
        \partial_{x}^2 b_1 
        \!-\!
         \frac{ \delta^{-\frac 32}}{{\rm Pr}_m}
        \frac{{\rm Ha}}{{\rm Re}\mathbb H}
        \partial_{y}^2 b_1 \!=\!
        \delta^{-\frac 32}\big(
            b_1 \partial_x u \!+\! 
            b_2 \partial_y u \!-\! 
            u \partial_x b_1 \! - \!
            v \partial_y b_1
        \big),\\
        \left(\frac{U_0}{c}\right)^2
        \frac{{\rm Ha}}{{\rm Re}\mathbb H}
        \frac{ \delta^{-1}}{{\rm Pr}_m}
        \partial_{\tau\tau}^2 b_2 
        \!+\!
        \delta^{-1} \partial_t b_2 
        \!-\! 
        \frac{1}{{\rm Pr}_m} 
        \frac{{\rm Ha}}{{\rm Re}\mathbb H}
        \partial_{x}^2 b_2 
        \!-\! 
        \frac{ \delta^{-  1}}{{\rm Pr}_m}
        \frac{{\rm Ha}}{{\rm Re}\mathbb H}
        \partial_{y}^2 b_2 
        \!=\! 
        \delta^{-1}
        \big(
            b_1 \partial_x v \!+\! 
            b_2 \partial_y v \!-\!  
            u \partial_x b_2 \!-\! 
            v \partial_y b_2
        \big),\\
        \delta^{-\frac{3}{2}}\big(\partial_t b_1 + \partial_y e\big) = 0,\\
        \delta^{-1}\big(\partial_t b_2 - \partial_x e\big) = 0,\\
        \delta^{-1}\big(\partial_x b_1 + \partial_y b_2\big) = 0,\\
    \end{cases}
    \!
\end{equation}
with boundary conditions
\begin{equation*}
    u(t ,x,0) = 0, 
    \quad 
    v(t ,x,0) = 0,
    \quad 
    b_1(t ,x,0) = 
    \mathbb{b}_1,
    \quad 
    b_2(t ,x,0) = 
    \mathbf{b}_2' 
    \quad 
    \text{and}
    \quad 
    e(t ,x,0) =
    \mathbb{e}.
\end{equation*}
We thus multiply the first, fourth and sixth equations in \eqref{eq:final-hartmann} by $\delta^{3/2}$, as well as 
the second, third, fifth, seventh and eighth equations by $\delta$. Finally, remarking that
\begin{equation*}
    \lim_{{\rm Re} \to \infty} \frac{{\rm Ha}}{{\rm Re}\mathbb H} = 1,\qquad 
     \lim_{{\rm Re}\to \infty} \frac{{\rm Ha^2}}{{\rm Re}} = 
      \lim_{{\rm Re} \to \infty} \frac{{\rm Ha^2}}{{\rm Re}}\delta =
       \lim_{{\rm Re} \to \infty} \frac{{\rm Ha}}{{\rm Re}}\mathbb H = \mathbb H^2,
\end{equation*}
we finally derive the main system \eqref{eq:main-system}, by also denoting $\kappa = (U_0/c)^2$.
\end{proof}

\section{Analytic Solutions of System \eqref{eq:main-system}}\label{sec:analysis}

\noindent
The main goal of the present paragraph is to prove the existence of certain smooth solutions for the derived system \eqref{eq:main-system} (cf.~Theorem \ref{thm:existence-analytic-solutions}). The analysis of these equations owns similar challenges as the ones of classical Prandtl, in particular the fact that the system lacks of regularising effects on the horizontal variable $x\in \mathbb{R}$ (the dissipative mechanisms of the system reside indeed only on the variable $y\in (0,1)$). In order to cope with this difficulty, it is rather common to impose high regularities on the initial data, along such horizontal variable. Our work addresses in particular the case of analytic functions. Before stating our main result, we shall first clarify the definition of analytic solutions, that we will use throughout the next sections. Furthermore, we provide a suitable shortening of the overall system, that will simplify our forthcoming analysis.

\subsection{Analytic functions on the horizontal direction}\label{sec:analytic-fcts}$\,$

\noindent 
A function  $f = f(x)$ is analytic in $x\in \mathbb R$, if its Fourier transform $\mathcal{F}_x(f)(\xi) = \hat{f}(\xi)$ decays exponentially to zero as $e^{-\tau|\xi|}$, for some $\tau>0$, when the frequency $\xi \in \mathbb{R}$ diverges to $\pm \infty$. For a fixed $\tau>0$ (which stands for the radius of analyticity of $f$), this function space is indeed Banach. Between the several equivalent norms, we will make use of the one given by $\| e^{\tau |D_x|}f \|_{H^s(\mathbb{R})}$, where $H^s(\mathbb{R})$ is a Sobolev space with regularity $s>2$ and  $e^{\tau |D_x|}$ stands for the Fourier multiplier $ \mathcal{F}_x(e^{\tau |D_x|}f)(\xi) = e^{\tau|\xi|} \hat{f}(\xi) $.

\noindent 
Thus, we are interested in solutions $(u,v,b_1,b_2)$ which depend upon $(t,x,y)\in (0,T)\times \mathbb{R}\times (0,1)$ and are analytic in the variable $x\in \mathbb R$, as described by the following function space:
\begin{equation}\label{sec:exitence-function-space}
\begin{aligned}
    e^{\tau(t) |D_x|}u,\,
    e^{\tau(t) |D_x|}b_1,\,
    e^{\tau(t) |D_x|}b_2,\,
     &\in 
    C([0,T],H^{s+1,0}(\mathbb{R}\times (0,1)),
    \\ 
    e^{\tau(t) |D_x|}\partial_t u,\,
    e^{\tau(t) |D_x|}\partial_t b_1,\,
    e^{\tau(t) |D_x|}\partial_t b_2 ,\,
    e^{\tau(t) |D_x|}v,\,
    e^{\tau(t) |D_x|}e
    &\in 
    C([0,T],H^{s,0}(\mathbb{R}\times (0,1)).
\end{aligned}
\end{equation}
The space $C([0,T],H^{s,0}(\mathbb{R}\times (0,1))$ (which we will abbreviate from now on by $C([0,T],H^{s,0})$) is anisotropic in space, namely it has $H^s$ regularity in $x\in \mathbb{R}$ and only $L^2$-regularity in $y\in(0,1)$. The corresponding norm on a general function $g = g(t,x,y)$ is given by
\begin{equation*}
        \sup_{t\in (0,T)}
        \int_0^1 
        \int_\mathbb{R}
        (1+|\xi|)^{2s}
        \big|
            \hat{g}(t,\xi,y)
        \big|^2  
        d\xi
        dy
        < \infty,
        \quad 
        \text{where}
        \quad 
        \hat{g}(t,\xi,y)
        =
        \mathcal{F}_x(g)(t,\xi,y)
        := 
        \int_{\mathbb{R}}
        e^{-i x \xi}f(t,x,y)dx
\end{equation*}
stands for the Fourier transform in the horizontal variable $x\in \mathbb R$. 

\noindent
The radius of analyticity $\tau(t)>0$ of the solutions in \eqref{sec:exitence-function-space} is explicitly defined in Theorem \ref{thm:existence-analytic-solutions} (cf.~\eqref{radius-of-analyticity-thm:existence-of-solutions}). It depends on the radius of analyticity of the initial data and coincides with it at initial time $t= 0$. One shall furthermore remark that $\tau(t)$ decreases in $t\in (0,T)$, a fact that expresses (roughly speaking) the degrading mechanisms of the regularity in the horizontal variable.

\subsection{Reduced system}\label{sec:reduced-system}$\,$

\noindent 
We remark that system \eqref{eq:main-system} can be shortened, since both $v$ and $b_2$ in \eqref{eq:main-system} are determined by the divergence-free relations $\partial_x u + \partial_y v = 0$, $\partial_x b_1 + \partial_y b_2 = 0$ and the boundary conditions $v_{y=0} = 0$, $b_{2|y=0} = 0$:
\begin{equation}\label{formula-for-v-and-b2}
    v(t,x,y) = -\int_0^y\partial_x u(t,x,z) dz,
    \quad\text{and}\quad  
    b_2(t,x,y) = -\int_0^y\partial_x b_1(t,x,z) dz.
\end{equation}
The above identities are well-defined, since we coupe with solutions that are smooth in $x\in \mathbb R$ and the corresponding derivatives $\partial_x u(t,x,\cdot)$, $\partial_x b_1(t,x,\cdot)$ are $L^2$-integrable in $y\in (0,1)$ (for any $(t,x)\in (0,T)\times \mathbb{R}$). Moreover, for the sake of a compact presentation, we will shorten from now on the identities in \eqref{formula-for-v-and-b2} as $ v  = -\int_0^y\partial_x u $ and $b_1 =-\int_0^y\partial_x b_1$.

\noindent
Similarly, the magnitude $e$ of the electric field can be recasted just in terms of $b_1$, making use of the relation:
\begin{equation*}
    \begin{cases}
        \partial_t b_1 + \partial_y e = 0\quad 
        &\text{in }(0,T) \times \mathbb R \times (0,1),\\
        e = 0 &\text{on }(0,T) \times \mathbb R \times \{ 0 \},
    \end{cases}
    \quad \Longleftrightarrow \quad 
        e(t,x,y) = 
        -\int_0^y \partial_t b_1(t,x,z)dz
\end{equation*}
for any $(t,x,y)\in (0,T)\times \mathbb R \times (0,1)$. 

\noindent 
Because of these aspects, the equations of $b_2$ and $e$ are redundant in system \eqref{eq:main-system} and we can reduce the considered model uniquely in terms of $(u,\,b_1)$:
\begin{equation}\label{eq:MHD-Prandtl-without-e}
    \begin{cases}
        \mathbb{J}
        \partial_{tt} u
        +
        \partial_t u + 
        u \partial_x u +
        v \partial_y u - 
        \partial_{yy}^2 u + \partial_x p = 
        \mathbb{H}^2
        \Big\{
        b_1 
        b_2
        v  - 
        u 
        b_2^2
        +b_2
        \big( 
            \int_0^y
            \partial_t b_1 
        \big)
        \Big\},       
        \\
        \partial_y p = 
        \mathbb{H}^2 
        \Big\{  
            b_1 
            b_2 
            u 
            -
            b_1^2 v
        - 
        b_1
        \big( 
            \int_0^y
            \partial_t b_1 
        \big)
        \Big\}
        ,\\
        \frac{\kappa}{\rm Pr_m} \partial^2_{tt} b_1 
        +
        \partial_{t} b_1 
        + 
        u \partial_x b_1 
        +
        v
        \partial_y b_1
        -  
        \frac{1}{\rm Pr_m} \partial_{yy}^2 b_1 = b_1\partial_x u + 
        b_2\partial_y u,
    \end{cases}
\end{equation}
in $(t,x,y)\in (0,T) \times  \mathbb{R} \times  (0,1)$, coupled with \eqref{formula-for-v-and-b2} and the following initial and boundary conditions:
\begin{alignat*}{8}
    &u_{|t = 0} = \bar{u},\; 
    \partial_t u_{|t = 0} = \tilde{u},\; 
    b_{1|t= 0} = \bar b_{1},\; 
    \partial_t b_{1|t = 0} =\tilde b_1 
    &&\quad\text{ in }\mathbb{R}\times (0,1),
    \\
    &
    u_{|y= 0} = u_{|y= 1} 
    = b_{1|y= 0} = b_{1|y= 1} = 0
    &&\quad\text{ on }(0,T)\times \mathbb{R}.
\end{alignat*}

\subsection{Statement of the result}$\,$

\noindent
The function space being introduced, we can state our result, which asserts the existence of global-in-time analytic solutions with small initial data.
\begin{theorem} \label{thm:existence-analytic-solutions}
For any $s>2$, there exists a sufficiently small positive constant $\varepsilon_s\in [0,1)$ (which depends uniquely upon $s$),  such that the following result holds true. 
Let $\bar u$, $\bar b_{1}$, $\tilde u$ and $\tilde b_1$ be initial data that are  analytic in the variable $x\in \mathbb{R}$ with radius of analyticity $\tau_0>0$:
\begin{equation*}
\begin{alignedat}{4}
    e^{\tau_0 (1+|D_x|)} \bar u     
    \quad\text{and}\quad
    e^{\tau_0 (1+|D_x|)} \bar b_1    
    \quad 
    &\text{belong to}
    \quad 
    H^{s+1,1}(\mathbb R \times (0,1)),\\
    e^{\tau_0 (1+|D_x|)} \tilde u   
    \quad\text{and}\quad
    e^{\tau_0 (1+|D_x|)} \tilde b_1  
    \quad 
    &\text{belong to}
    \quad 
    H^{s,0}(\mathbb R \times (0,1)).\\
\end{alignedat}
\end{equation*} 
If the following smallness condition on the initial data holds true
\begin{equation}\label{thm:small-condition}
\begin{aligned}
        \|  
            &
            e^{\tau_0(1+|D_x|)} \bar u       
        \|_{H^{s+1,0}}
        \!+\!
        \|  
            e^{\tau_0(1+|D_x|)} 
            \partial_y \bar u       
        \|_{H^{s,0}}
        \!+\!
        \|  
            e^{\tau_0(1+|D_x|)} \tilde u       
        \|_{H^{s,0}}
        \!+\!
        \|  
            e^{\tau_0(1+|D_x|)}  
            \bar b_1     
        \|_{H^{s+1,0}}
        \!+\!
        \|  
            e^{\tau_0(1+|D_x|)} \partial_y \bar b_1     
        \|_{H^{s,0}}
        + \\
        &+
        \|  e^{\tau_0(1+|D_x|)}  \tilde b_1   
        \|_{H^{s,0}}
        \leq 
        \delta :=
        \bigg(
        \frac{\min \{  1, \mathbb J, \kappa/{\rm Pr}_m\}^\frac{3}{2}}
        {\max \{  1, \mathbb J, \kappa/{\rm Pr}_m\}^\frac{5}{2}}
        \frac{
        \min\{\tau_0,\tau_0^{-1}\}^\frac{3}{2}}{\max\{1, \mathbb{H}^2\}\max\{{\rm Pr_m}^{-1}, {\rm Pr}_m\}^\frac{1}{2}}
        \bigg)
        \varepsilon_s,
\end{aligned}
\end{equation}
then there exists a global-in-time analytic solution $(u, b_1)$ of \eqref{eq:MHD-Prandtl-without-e}, which has a decaying radius of analyticity $\tau:\mathbb R_+ \to (0,\tau_0]$ given by
\begin{equation}\label{radius-of-analyticity-thm:existence-of-solutions}
\begin{aligned}
    \tau(t)  := 
        \tau_0
            \exp\Big\{ 
                -\frac{t}{16 \max\{1, \mathbb J, \kappa/{\rm Pr}_m\} }
        \Big\}>0.
\end{aligned}
\end{equation}
Furthermore,  the analytic norms of the solution decay exponentially in time $t\in \mathbb R_+$ as follows:
\begin{equation}\label{thm-analysis:main-inequality}
\begin{aligned}
    \| &e^{\tau(t)(1+|D_x|)} u(t)             \|_{H^{s+1,0}}^2 + 
    \| e^{\tau(t)(1+|D_x|)} \partial_t u(t)   \|_{H^{s,0}}^2+ 
    \| e^{\tau(t)(1+|D_x|)} \partial_y u(t)   \|_{H^{s,0}}^2 +
    \\
    &+
    \| e^{\tau(t)(1+|D_x|)} b_1(t)              \|_{H^{s+1,0}}^2 + 
    \| e^{\tau(t)(1+|D_x|)} \partial_t b_1(t)   \|_{H^{s,0}}^2+ 
    \| e^{\tau(t)(1+|D_x|)} \partial_y b_1(t)   \|_{H^{s,0}}^2 
    \\
    &
    \leq 
    C({\mathbb J, \kappa, {\rm Pr_m},\tau_0})
   \bigg\{
    \| e^{\tau_0(1+|D_x|)} \bar u               \|_{H^{s+1,0}}^2 + 
    \| e^{\tau_0(1+|D_x|)}  \tilde  u           \|_{H^{s,0}}^2+ 
    \| e^{\tau_0(1+|D_x|)} \partial_y\bar  u    \|_{H^{s,0}}^2 +
    \\
    &+
    \| e^{\tau_0(1+|D_x|)} \bar b               \|_{H^{s+1,0}}^2 + 
    \| e^{\tau_0(1+|D_x|)}  \tilde  b           \|_{H^{s,0}}^2+ 
    \| e^{\tau_0(1+|D_x|)} \partial_y\bar  b    \|_{H^{s,0}}^2 
    \bigg\}
    \exp\bigg\{-\frac{t}{8\max\{1, \mathbb J, \kappa/{\rm Pr_m}\}}
    \bigg\}
    .
\end{aligned}    
\end{equation}
where the constant $ C({\mathbb J, \kappa, {\rm Pr_m},\tau_0})$ is defined by
\begin{equation*}
    C({\mathbb J, \kappa, {\rm Pr_m},\tau_0})
    =
    4^3
    \frac{
        \max\{1,\mathbb J, \kappa/{\rm Pr_m}\}^3
    }
    {
        \min\{1,\mathbb J, \kappa/{\rm Pr_m}\}^3
    }
    \max\big\{{\rm Pr_m}, {\rm Pr_m^{-1}}\big\}
    \max \{\tau_0, \tau_0^{-1} \}^2
\end{equation*}
\end{theorem}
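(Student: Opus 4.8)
The plan is to prove Theorem \ref{thm:existence-analytic-solutions} through a single a priori estimate carried out in the analytic framework of Section \ref{sec:analytic-fcts} (this is the content of Proposition \ref{prop:the-overall-final-estimate-in-eta}), which—together with a standard approximation and continuation scheme—delivers simultaneously the global existence and the decay bound \eqref{thm-analysis:main-inequality}. The central object is a weighted analytic energy adapted to the damped-wave structure of the reduced system \eqref{eq:MHD-Prandtl-without-e}. With $\tau(t)$ as in \eqref{radius-of-analyticity-thm:existence-of-solutions}, I would set
\begin{equation*}
\mathcal E(t) := \| e^{\tau(t)(1+|D_x|)} u\|_{H^{s+1,0}}^2 + \mathbb J\| e^{\tau(t)(1+|D_x|)}\partial_t u\|_{H^{s,0}}^2 + \| e^{\tau(t)(1+|D_x|)}\partial_y u\|_{H^{s,0}}^2,
\end{equation*}
together with the three analogous contributions for $b_1$ (the $\partial_t$-term now weighted by $\kappa/{\rm Pr_m}$ and the $\partial_y$-term by $1/{\rm Pr_m}$), and augmented by the lower-order cross terms $\langle \mathbb J\,\partial_t u, u\rangle$ and $\langle (\kappa/{\rm Pr_m})\partial_t b_1, b_1\rangle$ (weighted analogously) that are customary when extracting exponential decay from the damping $\partial_t u$, $\partial_t b_1$ via the Poincaré inequality in $y\in(0,1)$, valid because of the homogeneous Dirichlet conditions. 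The scalar factors $\mathbb J$ and $\kappa/{\rm Pr_m}$ weighting the second-order-in-time contributions are precisely what produces the quantities $\max\{1,\mathbb J,\kappa/{\rm Pr_m}\}$ and $\min\{1,\mathbb J,\kappa/{\rm Pr_m}\}$ that govern both the equivalence $\mathcal E(t)\simeq(\text{left-hand side of }\eqref{thm-analysis:main-inequality})$ and the decay rate.

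The decisive mechanism is the gain of one horizontal derivative furnished by the decay of the radius of analyticity: differentiating $\| e^{\tau(t)(1+|D_x|)} f\|^2$ in $t$ produces the nonpositive term $2\dot\tau(t)\,\| (1+|D_x|)^{1/2} e^{\tau(t)(1+|D_x|)} f\|^2$, and since $\dot\tau<0$ this provides control of one extra power of $|D_x|$. After applying $e^{\tau(t)(1+|D_x|)}$ to each equation in \eqref{eq:MHD-Prandtl-without-e} and pairing in $H^{s,0}$ against the natural multipliers (essentially $\partial_t u$ and $\partial_t b_1$, supplemented by the pairings yielding the $\partial_y$-dissipation and the cross-term decay), the linear part yields the damping $-c\,\mathcal E$ and the vertical-diffusion coercivity, while every nonlinear term loses exactly one $x$-derivative—which is exactly what the gain term can absorb.

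The nonlinear contributions split into the convective terms $u\partial_x u$, $v\partial_y u$, $u\partial_x b_1$, $v\partial_y b_1$, the magnetic and Lorentz sources $\mathbb H^2(b_1 b_2 v - u b_2^2 - b_2 e)$ and $b_1\partial_x u + b_2\partial_y u$, and the pressure gradient $\partial_x p$. I would estimate the first two groups with the anisotropic analytic product law of Lemma \ref{lemma:technical-lemma-product-law-Hs0}: this is where $s>2$ enters, ensuring that $H^s(\mathbb R)$ is an algebra and providing the trace and embedding estimates in $y$, and recalling that $v=-\int_0^y\partial_x u$ and $b_2=-\int_0^y\partial_x b_1$ carry one extra $x$-derivative but gain integrability in $y$, so that their contributions are bounded by $\sqrt{\mathcal E}$ times the gain term. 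The genuinely delicate object is $\partial_x p$, which must first be reconstructed from the second equation of \eqref{eq:MHD-Prandtl-without-e},
\begin{equation*}
p(t,x,y)=p(t,x,0)+\mathbb H^2\int_0^y\big(b_1 b_2 u - b_1^2 v + b_1 e\big)(t,x,z)\,dz,
\end{equation*}
while the $y$-independent integration constant $\partial_x p(t,x,0)$ is fixed by a compatibility condition obtained from integrating the $u$-momentum equation across $y\in(0,1)$ and using $u|_{y=0}=u|_{y=1}=0$. This produces genuinely trilinear terms in $(u,b_1,v,b_2,e)$, which—being one order more nonlinear than the bilinear terms of Prandtl-MHD \eqref{eq:MHD-Prandtl-literature}—are the main obstacle; I expect their control to consume most of the technical effort and to dictate the precise powers of $\min\{1,\mathbb J,\kappa/{\rm Pr_m}\}$ and $\max\{1,\mathbb J,\kappa/{\rm Pr_m}\}$ in the smallness threshold \eqref{thm:small-condition}.

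Collecting all contributions yields a differential inequality of the schematic form
\begin{equation*}
\frac{d}{dt}\mathcal E(t) + \frac{c}{\max\{1,\mathbb J,\kappa/{\rm Pr_m}\}}\,\mathcal E(t) \le \Big(C(\mathbb H,{\rm Pr_m},\kappa,\mathbb J)\sqrt{\mathcal E(t)} - |\dot\tau(t)|\Big)\,\mathcal G(t),
\end{equation*}
where $\mathcal G(t)\ge 0$ denotes the gain quantity controlling one extra horizontal derivative. The choice of $\tau(t)$ in \eqref{radius-of-analyticity-thm:existence-of-solutions} makes $|\dot\tau(t)|$ comparable to the linear decay rate, and a continuity (bootstrap) argument shows that if the data obey \eqref{thm:small-condition}, so that $\sqrt{\mathcal E(0)}$ lies below the threshold at which $C\sqrt{\mathcal E}\le|\dot\tau|$, then the bracket remains nonpositive for all time. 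The nonlinear terms are thereby absorbed, leaving $\frac{d}{dt}\mathcal E+\frac{c}{\max\{1,\mathbb J,\kappa/{\rm Pr_m}\}}\mathcal E\le 0$, i.e. the exponential decay recorded in \eqref{thm-analysis:main-inequality}. Global existence itself then follows by running this uniform estimate on a sequence of approximate (for instance Friedrichs-mollified or Galerkin) solutions, whose local-in-time solvability is standard, and passing to the limit; the a priori bound prevents blow-up, so the maximal existence time is infinite and the constant $C({\mathbb J,\kappa,{\rm Pr_m},\tau_0})$ emerges from tracking the equivalence constants between $\mathcal E(t)$ and the norms appearing in \eqref{thm-analysis:main-inequality}.
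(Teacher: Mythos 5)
Your overall strategy coincides with the paper's: a weighted analytic energy built on the damped-wave structure (with the cross terms $\|\mathbb J(\partial_t u)_\eta+u_\eta\|^2$, $\|(\kappa/{\rm Pr_m})(\partial_t b_1)_\eta+b_{1,\eta}\|^2$ and Poincar\'e in $y$ to extract exponential decay), the half-derivative gain coming from the shrinking radius of analyticity, the anisotropic product law of Lemma \ref{lemma:technical-lemma-product-law-Hs0}, and a bootstrap that keeps the nonlinear coefficient below $|\dot\tau|$. Two points deserve comment. First, your schematic inequality with a single gain quantity $\mathcal G$ undersells what the trilinear terms actually require: in the paper the pressure contributions are bounded by $\mathcal E_s\sqrt{\mathcal D_{s+1/2}\,\mathcal D_{s+3/2}}$, so one needs a whole hierarchy of dissipations up to regularity $s+\tfrac32$, weighted by $\eta'$, $(\eta')^2$ and $(\eta')^3$ respectively (Proposition \ref{prop:the-overall-final-estimate-in-eta}); a single half-derivative gain does not close the trilinear estimates, and this hierarchy is exactly what forces the specific powers of $\min\{\tau_0,\tau_0^{-1}\}$ and of $\min\{1,\mathbb J,\kappa/{\rm Pr_m}\}$ in \eqref{thm:small-condition}.

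Second, and more substantively, your treatment of the pressure diverges from the paper's and, as described, would run into a genuine obstruction. You propose to reconstruct $p$ as $p(t,x,0)+\int_0^y\partial_y p$ and to fix $\partial_x p(t,x,0)$ by integrating the momentum equation over $y\in(0,1)$. That compatibility relation involves $\partial_y u|_{y=0}$ and $\partial_y u|_{y=1}$ (from $\int_0^1\partial_y^2u\,dy$), and these traces are not controlled by the anisotropic energy, which only gives $\partial_y u_\eta\in H^{s,0}$, i.e.\ $L^2$ in $y$ with no room for a trace. Moreover the resulting $y$-independent forcing $\partial_x p(t,x,0)$ does not pair to zero against the multiplier $2\mathbb J(\partial_t u)_\eta+u_\eta$. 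The paper avoids all of this: it never reconstructs $p$, but integrates by parts in $x$, uses $\partial_xu=-\partial_yv$, and integrates by parts in $y$ to convert $-\langle\partial_xp_\eta,\,2\mathbb J(\partial_tu)_\eta+u_\eta\rangle_{H^{s,0}}$ into $\langle\partial_yp_\eta,\,2\mathbb J(\partial_tv)_\eta+v_\eta\rangle_{H^{s,0}}$, and then substitutes $\partial_yp$ directly from the second equation of \eqref{eq:MHD-Prandtl-without-e} (see \eqref{terms-related-to-the-pressure}). You should adopt this route (or justify the boundary traces by other means) before the pressure step can be considered closed; the rest of your outline matches the paper.
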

\noindent 
In what follows, we shall first describe the main idea for the proof and postpone the detailed estimates (that are rather involved) to the subsequent sections.

\subsection{Proof of Theorem \ref{thm:existence-analytic-solutions}}\label{sec:proof-of-main-thm}$\,$

\noindent
Our approach is entirely performed in terms of a-priori estimates. Indeed, without loss of generality, we can assume that the regular initial data generates a local-in-time analytic solution, whose largest lifespan is denoted by $T>0$. If $T<+\infty$, the norms on the left-hand side of \eqref{thm-analysis:main-inequality} would blow up, thus our aim is to prolong the smallness condition of the initial data to any time $t \in (0,T)$. This implies in particular that $T = +\infty$ and thus the local solution is in reality global in time.

\noindent
The core of the proof relies on a suitable transformation of the state variables $(u, b_1)$ in system \eqref{eq:MHD-Prandtl-without-e}, which aims to highlight the time behaviour of the underlying radius of analyticity. More precisely, for a general non decreasing function $\eta: \mathbb R_+:=[0,+\infty) \to [0, \tau_0]$ in $ C^2(\mathbb{R}_+)$, with $\eta(0) = 0$, we introduce the transformation $f\to f_\eta$ to a general function $f = f(t,x,y)$, by applying the Fourier multiplier $e^{(\tau_0-\eta(t))(1+|D_x|)}$ :
\begin{equation} \label{eq:ubphi}
	\begin{aligned}
		&f_{\eta}(t,x,y) := e^{(\tau_0-\eta(t))(1+|D_x|)} f(t,x,y), \quad
		\text{i.e.} \quad 
		\mathcal{F}_x(f_{\eta})(t,\xi,y) = 
		e^{(\tau_0-\eta(t)) (1+|\xi|)}
		\mathcal{F}_x(f)(t,\xi,y).
	\end{aligned}
\end{equation}
Here $\mathcal{F}_x$ stands for the Fourier transform uniquely on the variable $x\in \mathbb R$, while $\tau_0$ is the radius of analyticity of the initial data. 

\noindent
The Fourier multiplier and the related transformation \eqref{eq:ubphi} are well defined, as long as $f$ is analytic in $x\in \mathbb{R}$ for fixed $(t,y)\in (0,T) \times (0,1)$, with radius of analyticity given by $\tau_0 -\eta(t)$ (or larger). In particular this positive radius degrades (or stays constant) as time $t \in \mathbb R_+$ increases, since we restrict the function $\eta$ within the interval  $[0,\tau_0)$.

\noindent 
Our approach is to apply the mentioned transformation to both $u$ and $b_1$ and eventually to determine an optimal function $\eta$ in \eqref{eq:ubphi},  such that the new functions $u_\eta$ and $b_{1, \eta}$ fulfill a specific dissipative energy estimate (cf.~Proposition \ref{prop:the-overall-final-estimate-in-eta} and \eqref{def-lambda} for the final form of $\eta$). This energy  controls higher-order Sobolev norms of the transformed state variables  $u_\eta$ and $b_{1, \eta}$ and thus of the analytic norms of the solutions  $u$ and $b_{1}$ themselves. 

\noindent 
Roughly speaking, for a general function $\eta$, the transformation produces some additional damping mechanisms and dissipations to the system, but at the same time introduces further nonlinearities that could complicate the overall analysis. Our goal is therefore to select a suitable function $\eta$, so that the damping mechanism are indeed predominant. In this regime, a suitable ``high-order" energy occurs that allows to control the $H^{s,0}$-norms of $u_\eta$ and $b_{1, \eta}$ (or equivalently of $e^{(\tau_0-\eta(t))|D_x|} u$ and  $e^{(\tau_0-\eta(t))|D_x|} b_1$, for the analyticity).

\noindent
We proceed now to formalise the described strategy and we begin with, by stating the following proposition, that provides the mentioned energy inequality (with higher-order Sobolev norms) for a general function $\eta\in C^2(\mathbb R_+)$.
\begin{prop}\label{prop:the-overall-final-estimate-in-eta}
Denote by $\mathfrak{m}=\mathfrak{m}(\mathbb J, \kappa, {\rm Pr}_m )$, 
$\mathfrak{M}=\mathfrak{M}(\mathbb J, \kappa, {\rm Pr}_m )$ and by $\mathcal{R} =\mathcal{R} (\mathbb J, \kappa, {\rm Pr}_m )$ the following three parameters:
\begin{equation*}
    \mathfrak{m} 
    := 
    \min 
    \Big\{ 
        1,\mathbb J, \frac{\kappa}{{\rm Pr}_m} 
    \Big\},\qquad 
    \mathfrak{M} 
    := 
    \max 
    \Big\{ 
        1,\mathbb J, \frac{\kappa}{{\rm Pr}_m} 
    \Big\},
    \qquad 
    \R 
    :=
    \frac{1}{4\mathfrak{M}} = 
    \frac{1}{4\max \{ 1, \mathbb J, \kappa/{\rm Pr}_m\}},
    \qquad 
    .
\end{equation*}
There exists a constant $D_s\geq 1$, which depends uniquely on the regularity $s>2$, such that the following inequality holds true for any function $\eta \in C^2(\mathbb R)$, with $\eta(0) = 0$:
\begin{equation}\label{ineq-proposition-Es}
\begin{aligned} 
    &
    \underbrace{
    e^{\R t}
    \Big(
    \mathcal{E}_{s}
    +
    \mathfrak{m}\,
    \eta'
    \mathcal{E}_{s+\frac{1}{2}}
    +
    \mathfrak{m}^2
    (\eta')^2
    \mathcal{E}_{s+1}
    \Big) 
    (t)
    }_{\text{Energy}}
    \!+\!
    \underbrace{
    \int_0^t
    \!\!\!
    e^{\R \tilde t}
    \Big(
    \mathcal{D}_{ s}
    \!+\!
    \eta'
    \mathcal{D}_{s+\frac{1}{2}}
    \!+\!
    \mathfrak{m}
    (\eta')^2
    \mathcal{D}_{s+1}
    \!+\!
    \mathfrak{m}^2
    (\eta')^3
    \mathcal{D}_{s+\frac{3}{2}}
    \Big)
    (\tilde{t})
    d\tilde{t}
    }_{\text{Dissipation}}
    + \\
    &\!-\!
    \underbrace{
    \int_0^t
    \!\!\!
    e^{\R \tilde t}
    \bigg\{
    \!
    \Big(
        \mathbb J
        \!+\! 
        \frac{\kappa}{{\rm Pr}_m}
    \Big)
    \eta'' 
    \mathcal{E}_{s+\frac 12}
    \!\!+\!2
    \Big(
        \mathbb J^2
        \!+\!
        \frac{\kappa^2}{{\rm Pr}_m^2}
    \Big)\eta'\eta''
    \mathcal{E}_{s+1}
    \!
    \bigg\}
    (\tilde t)
    d\tilde t
    }_{\text{Rest due to the hyperbolicity of the system}}
    \!\leq \!
    \underbrace{
     \mathcal{E}_s(0)
     \!+\!
     \mathfrak{M} 
     \eta'(0)
     \mathcal{E}_{s+\frac{1}{2}}(0)
     \!+\!
     \mathfrak{M}^2
     \eta'(0)^2
     \mathcal{E}_{s+1}(0)
     }_{\text{Initial energy}}
     \!+\\ 
     &+
    D_s
    \max\{1,\mathbb H^2\}
    \Bigg\{
     \underbrace{
     \max
     \Big\{
     \frac{1}{\sqrt{\mathbb J}},
     \frac{{\rm Pr}_m}{\sqrt{\kappa}}
     \Big\}
     \int_0^t
     e^{\R \tilde t}
    \sqrt{\mathcal{E}_{s}(\tilde t)}
     \mathcal{D}_{ s+\frac{1}{2}}(\tilde{t})
     d\tilde t
    +
    \int_0^t
    e^{\R \tilde t}
    \mathcal{E}_{s}(\tilde t)
     \mathcal{D}_{ s+\frac{1}{2}}(\tilde{t})
     d\tilde t
    }_{\text{Estimate of the bilinear terms} }
    +\\
    &\hspace{7cm}+
    \underbrace{
    \max
     \Big\{
     \frac{1}{\sqrt{\mathbb J}},
     \frac{{\rm Pr}_m}{\sqrt{\kappa}}
     \Big\}
    \int_0^t
    e^{\R \tilde t}
    \mathcal{E}_{ s}(\tilde{t})
    \sqrt{ \mathcal{D}_{ s+\frac{1}{2}}(\tilde{t}) \mathcal{D}_{ s+\frac{3}{2}}(\tilde{t})}
     d\tilde t
    }_{\text{Estimate of the trilinear terms}}
    \Bigg\}.
\end{aligned}
\end{equation}
The functionals $\mathcal{E}_{s}$, $\mathcal{E}_{s+1/2}$ and 
$\mathcal{E}_{s+1}$ are defined in terms of the following Sobolev norms for the transformed solution $(u_\eta,b_{1,\eta})$:
\begin{equation}\label{def:Eetas}
    \begin{alignedat}{128}
        &\mathcal{E}_{s}
        \!:=\! 
        \frac{\mathbb J^2}{2}
        \| 
        (\partial_t u)_{\eta}   
        \|_{H^{s,0}}^2 
        \!+\!
        \frac{1}{2}
        \|  
        \mathbb J
        (\partial_t u)_{\eta}   
        \!+\!
        u_\eta
        \|_{H^{s,0}}^2 
        \!+\!
        \mathbb J
        \| 
        \partial_y u_{\eta}   
        \|_{H^{s,0}}^2 \\
        &\hspace{3.27cm}+
        \frac{\kappa^2}{2{\rm Pr_m}^2}
        \| 
        (\partial_t b_1)_{\eta}   
        \|_{H^{s,0}}^2 
        \!+\!
        \frac{1}{2}
        \Big\|  
        \frac{\kappa}{{\rm Pr_m}}
        (\partial_t b_1)_{\eta}   
        \!+\!
        b_{1,\eta}
        \Big\|_{H^{s,0}}^2 
        \!+\!
        \frac{\kappa}{{\rm Pr_m^2}}
        \| 
         \partial_y b_{1,\eta}   
        \|_{H^{s,0}}^2,
        \\
        &\mathcal{E}_{s+\frac 12} 
        := 
        \frac{1}{2}
        \| 
             u_{\eta}  
        \|_{H^{s+\frac{1}{2},0}}^2 
        +
        \frac{1}{2}
        \|  
             b_{1,\eta}   
        \|_{H^{s+\frac{1}{2},0}}^2,\qquad
        \mathcal{E}_{s+1} 
        := 
        \| 
            u_{\eta}   
        \|_{H^{s+1,0}}^2 
        +
        \| 
             b_{1,\eta}   
        \|_{H^{s+1,0}}^2.
\end{alignedat}
\end{equation} 
Furthermore the dissipative functionals $\mathcal{D}_{s}$, $\mathcal{D}_{s+1/2}$, 
$\mathcal{D}_{s+1}$ and $\mathcal{D}_{s+\frac{3}{2}}$ are defined by
\begin{equation}\label{def:Detas}
\begin{aligned}
        \mathcal{D}_{s}
        &:= 
        \frac{1}{2}        
        \big\| \partial_y u_{\eta}   \|_{H^{s,0}}^2
        +
        \frac{1}{2}
        \big\| (\partial_t u)_{\eta} \|_{H^{s,0}}^2
        +
        \frac{1}{2}
        \big\| \partial_y b_{1,\eta} \|_{H^{s,0}}^2
         +
        \frac{1}{2}
        \big\| (\partial_t b)_{1,\eta} \|_{H^{s,0}}^2,
        \\
        \begin{matrix}
        \mathcal{D}_{s+\frac{1}{2}}\\
        $\vspace{0.2cm}$
        \end{matrix}
        &
        \begin{matrix}
        \hspace{0.125cm}:=\\
        $\vspace{0.2cm}$
        \end{matrix}
        \hspace{-0.19cm}
        \begin{alignedat}{64}        
            &\frac{1}{2}
            \| 
            \,(\partial_t u)_{\eta}   
            &&
            \|_{H^{s+\frac{1}{2},0}}^2 
            \!+\!
            \frac{1}{2}
            \|  
            \,\,(\partial_t u)_{\eta}   
            \!+\!
            u_\eta
            &&&&
            \|_{H^{s+\frac{1}{2},0}}^2 
            \!+\!
            2
            \| 
            \,\partial_y u_{\eta}   
            &&&&&&&& 
            \|_{H^{s+\frac{1}{2},0}}^2 
            \!+\!
            \| 
            \,
            \partial_t(u_{\eta})   
            &&&&&&&&&&&&&&&&
            \|_{H^{s+\frac{1}{2},0}}^2 
            \!+\!
            \frac{3}{8}
            \| 
            \,u_{\eta}   
            &&&&&&&&&&&&&&&&&&&&&&&&&&&&&&&&    
            \|_{H^{s+\frac{1}{2},0}}^2 
            \!+\!
            \\ 
            \!+
            &
            \frac{1}{2}
            \| 
            (\partial_t b_1)_{\eta}   
            &&
            \|_{H^{s+\frac{1}{2},0}}^2 
            \!+\!
            \frac{1}{2}
            \|  
            (\partial_t b_1)_{\eta}   
            \!+\!
            b_{1,\eta}
            &&&&
            \|_{H^{s+\frac{1}{2},0}}^2 
            \!+\!
            2
            \| 
             \partial_y b_{1,\eta}   
            &&&&&&&&
            \|_{H^{s+\frac{1}{2},0}}^2 +
            \| 
             \partial_t(b_{1,\eta})  
            &&&&&&&&&&&&&&&&
            \|_{H^{s+\frac{1}{2},0}}^2 
            \!+\!
            \frac{3}{8}
            \| 
                b_{1,\eta}   
            &&&&&&&&&&&&&&&&&&&&&&&&&&&&&&&&      
            \|_{H^{s+\frac{1}{2},0}}^2,
        \end{alignedat}\\
        \mathcal{D}_{s+1}
        &:= 
        \frac{3}{4}
        \| 
             u_{\eta}  
        \|_{H^{s+\frac{1}{2},0}}^2 
        +
        \frac{3}{4}
        \|  
         \partial_t b_{1,\eta}   
        \|_{H^{s+\frac{1}{2},0}}^2,\qquad
        \mathcal{D}_{s+\frac{3}{2}}(t)
        := 
        \| 
            u_{\eta}   
        \|_{H^{s+\frac{3}{2},0}}^2 
        +
        \| 
             b_{1,\eta}   
        \|_{H^{s+\frac{3}{2},0}}^2.
\end{aligned}
\end{equation} 

\end{prop}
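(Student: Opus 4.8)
The plan is to perform a weighted energy estimate directly on the transformed variables $u_\eta$ and $b_{1,\eta}$ defined in \eqref{eq:ubphi}, exploiting the damping generated by the analyticity multiplier together with the dissipative structure of the hyperbolic system \eqref{eq:MHD-Prandtl-without-e}. First I would apply the Fourier multiplier $e^{(\tau_0-\eta(t))(1+|D_x|)}$ to each equation and commute it past the time derivatives. Writing $\Lambda := 1+|D_x|$ and using the elementary identity $(\partial_t f)_\eta = \partial_t f_\eta + \eta'\Lambda f_\eta$, the second-order term transforms as
\[
(\partial_{tt} u)_\eta = \partial_{tt} u_\eta + 2\eta' \Lambda \partial_t u_\eta + \eta'' \Lambda u_\eta + (\eta')^2 \Lambda^2 u_\eta,
\]
and analogously for $b_1$. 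The three genuinely new contributions $2\eta'\Lambda\partial_t u_\eta$, $(\eta')^2\Lambda^2 u_\eta$ and $\eta''\Lambda u_\eta$ are precisely the source of, respectively, the extra dissipation, the derivative gain encoded in $\mathcal D_{s+\frac12},\mathcal D_{s+1},\mathcal D_{s+\frac32}$, and the $\eta''$-rest terms appearing on the left of \eqref{ineq-proposition-Es}.

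Next I would pair the transformed $u_\eta$ equation, in the $H^{s,0}$ inner product, against the multipliers dictated by the shape of $\mathcal E_s$ in \eqref{def:Eetas} — namely $(\partial_t u)_\eta$, the combination $\mathbb J(\partial_t u)_\eta + u_\eta$, and $-\partial_{yy} u_\eta$ — and do the same for $b_{1,\eta}$ with weight $\kappa/{\rm Pr}_m$. The quantity $\tfrac{\mathbb J^2}{2}\|(\partial_t u)_\eta\|^2 + \tfrac12\|\mathbb J(\partial_t u)_\eta + u_\eta\|^2$ is the standard Lyapunov functional for the damped wave operator $\mathbb J\partial_{tt}+\partial_t-\partial_{yy}$; its time derivative produces the base dissipation $\mathcal D_s$ (controlling $\|\partial_y u_\eta\|$, $\|(\partial_t u)_\eta\|$ and their $b_1$-analogues), the no-slip boundary conditions together with the Poincar\'e inequality in $y$ making $\mathcal D_s$ coercive over $\mathcal E_s$ up to the constant $\mathfrak M$. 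Reinserting $(\partial_t u)_\eta = \partial_t u_\eta + \eta'\Lambda u_\eta$ upgrades the $H^{s,0}$ norms by factors of $\eta'\Lambda$, which is exactly how the fractional dissipations $\mathcal D_{s+\frac12}$ and $\mathcal D_{s+\frac32}$ with their $\eta'$ and $(\eta')^2$ weights arise. I would then assemble the three-tier functional $\mathcal E_s + \mathfrak m \eta' \mathcal E_{s+\frac12} + \mathfrak m^2(\eta')^2\mathcal E_{s+1}$, multiply by $e^{\mathcal R t}$ with $\mathcal R = 1/(4\mathfrak M)$, and differentiate in time: the factor $e^{\mathcal R t}$ turns the coercive damping into exponential decay, while differentiating the $\eta'$- and $(\eta')^2$-weighted tiers generates precisely the $\eta''$ and $\eta'\eta''$ contributions collected as the hyperbolic rest. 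Choosing $\mathfrak m,\mathfrak M$ as the min/max of $\{1,\mathbb J,\kappa/{\rm Pr}_m\}$ keeps every coercivity constant uniform in the physical parameters.

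The final and most delicate step is the estimate of the nonlinear forcing. The convective terms $u\partial_x u$, $v\partial_y u$, $u\partial_x b_1$, $v\partial_y b_1$ are bilinear, and after substituting $v=-\int_0^y\partial_x u$ and $b_2=-\int_0^y\partial_x b_1$ from \eqref{formula-for-v-and-b2} they cost one horizontal derivative; this loss is absorbed by pairing against the dissipation, via the tame product estimate of Lemma \ref{lemma:technical-lemma-product-law-Hs0} (this is where $s>2$ enters), producing bounds of the form $\sqrt{\mathcal E_s}\,\mathcal D_{s+\frac12}$ and $\mathcal E_s\,\mathcal D_{s+\frac12}$. The genuinely hard part is the right-hand side of the momentum equation together with the pressure: since $\partial_y p$ is itself a trilinear expression in $(u,v,b_1,b_2,\int_0^y\partial_t b_1)$ and $p$ is recovered by a further vertical integration, the term $\partial_x p$ couples all unknowns and loses an extra horizontal derivative. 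I expect to control these trilinear contributions only by spending the top dissipation, obtaining the mixed bound $\mathcal E_s\sqrt{\mathcal D_{s+\frac12}\,\mathcal D_{s+\frac32}}$ that appears on the right of \eqref{ineq-proposition-Es}; managing the anisotropy (an $L^\infty_y$ or $L^2_y$ control on the vertically-integrated quantities) without wasting the available derivative gain is the main obstacle. Collecting the energy identity, the sign-definite dissipations, the $\eta''$-rest, and the nonlinear bounds, and integrating from $0$ to $t$, then yields \eqref{ineq-proposition-Es} with a constant $D_s$ depending only on $s$.
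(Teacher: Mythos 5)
Your proposal follows essentially the same route as the paper's proof: the conjugation by $e^{(\tau_0-\eta(t))(1+|D_x|)}$ with the commutation identity $(\partial_t f)_\eta=\partial_t f_\eta+\eta'(1+|D_x|)f_\eta$, the damped-wave Lyapunov pairing against $(\partial_t u)_\eta$ and $\mathbb J(\partial_t u)_\eta+u_\eta$ (and the $\kappa/{\rm Pr}_m$-weighted analogue for $b_1$) together with Poincar\'e in $y$, the three-tier weighted functional whose differentiation produces the $\eta''$ rest, the product law of Lemma \ref{lemma:technical-lemma-product-law-Hs0} for the bilinear convective terms, and the transfer of $\partial_x p_\eta$ onto $v_\eta$ to reach the trilinear pressure bound $\mathcal E_s\sqrt{\mathcal D_{s+\frac12}\mathcal D_{s+\frac32}}$. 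The only differences are organizational (you expand $(\partial_{tt}u)_\eta$ fully at the outset, while the paper keeps $(\partial_t u)_\eta$ as an intermediate variable and expands only where the higher-order norms are needed), so the plan is sound and matches the paper's argument.
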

\noindent 
Since the proof of this Proposition is rather technical, we postpone it to the forthcoming sections and we focus this paragraph to the remaining steps to prove Theorem \ref{thm:existence-analytic-solutions}. We shall however first provide some remarks on the main inequality \eqref{ineq-proposition-Es}, and highlight in particular the dissipative mechanisms due to $\eta$, as well as the more challenging terms, that we are indeed left to estimate.   

\noindent 
An explicit relation on the  constant $D_s\geq 1$ is formally determined later on (cf.~\eqref{Ds-explicit-determined}), which we shall nevertheless here outline :
\begin{equation}\label{Ds-explicit-not-proven}
    D_s = \frac{2^{2s+6}}{s-2}
    \left(
        1+\frac{s-2}{\sqrt{s-1}}
    \right)\geq 1. 
\end{equation}
By assuming that Proposition \ref{prop:the-overall-final-estimate-in-eta} holds true, the proof of Theorem \ref{thm:existence-analytic-solutions} follows with some straightforward steps. Indeed, our main goal is to determine a suitable function $\eta\in \mathcal{C}^2(\mathbb R_+)$ in \eqref{ineq-proposition-Es} and a small parameter $\varepsilon_s>0$ for the initial condition \eqref{thm:small-condition}, that ensure the following relations:
\begin{enumerate}[(a)]
    \item the terms on the left-hand side of \eqref{ineq-proposition-Es} are all non-negative and thus ``support'' the $H^{s,0}$-energy inequality,
    \item the right-hand side of \eqref{ineq-proposition-Es} can eventually be absorbed by some of the positive terms of the left-hand side, under a suitable smallness condition on the initial data.
\end{enumerate}
For what concerns part $(a)$, the only term that (for a general $\eta$) could reach negative values is the first integral at the second line of  \eqref{ineq-proposition-Es}, namely
\begin{equation}\label{integral-related-to-eta''}
    -
    \int_0^t
    e^{\R \tilde t}
    \bigg\{
    \Big(
        \mathbb J
        \!+\! 
        \frac{\kappa}{{\rm Pr}_m}
    \Big)
    \eta'' 
    \mathcal{E}_{s+\frac 12}
    +2
    \Big(
        \mathbb J^2
        \!+\!
        \frac{\kappa^2}{{\rm Pr}_m^2}
    \Big)\eta'\eta''
    \mathcal{E}_{s+1}
    \bigg\}
    (\tilde t)
    d\tilde t.
\end{equation}
The sign of this integral is entangled with the sign of the weights $-\eta''(t)$ and $-2\eta'(t)\eta''(t)$, $t \in \mathbb{R}$. It is natural therefore to calibrate the function $\eta\in \mathcal{C}^2(\mathbb R)$, in such a way that this integral provides a positive dissipation or at least vanishes. In other words, we shall seek for a function $\eta \in \mathcal{C}^2(\mathbb R_+)$ such that
\begin{equation}\label{cond-positivity-eta''}
     \tau_0-\eta(t)>0,\quad 
     \eta'(t)\geq 0,\quad 
     -\eta''( t)\geq 0,\qquad 
     \text{for any}\quad t\in \mathbb{R}_+.
\end{equation}
Among the several functions satisfying \eqref{cond-positivity-eta''}, we consider a specific family of the form $\eta(t):= \tau_0(1-e^{- \lambda t})$, where $\lambda$ is (momentarily) an arbitrary positive constant (the exact value of $\lambda$ for our analysis will be shortly be determined in \eqref{def-lambda}). Indeed, we remark that
\begin{equation*}
     \tau_0-\eta(t)
     = \tau_0e^{-\lambda t}>0 ,
     \quad 
     \eta'(t) =
     \lambda 
     \tau_0e^{- \lambda t} >0,
     \quad 
     -\eta''(t) = 
     \lambda^2 
     \tau_0e^{- \lambda t} >0,
     \qquad 
     \text{for any}\quad t\in \mathbb{R}_+.
\end{equation*}
In doing so, we can recast the main inequality \eqref{ineq-proposition-Es} into
\begin{equation}\label{main-estimate-with-lambda}
\begin{aligned} 
    &e^{\R t}
    \mathcal{E}_{s}(t)
    +
    \mathfrak{m}
    \lambda 
    \tau_0
    e^{(\R -\lambda)t}
    \mathcal{E}_{s+\frac{1}{2}}
    +
    \mathfrak{m}^2
    \lambda^2
    \tau_0^2
    e^{(\R -2\lambda) t}
    \mathcal{E}_{s+1}
    (t)
    \!+\!
    \int_0^t
    \!\!\!
    e^{\R \tilde t}
    \mathcal{D}_{ s}
    (\tilde t)
    d\tilde t
    \!+\!
    \tau_0
    \lambda
    \int_0^t
    e^{ (\R -\lambda )\tilde  t}
    \mathcal{D}_{s+\frac{1}{2}}
    (\tilde t)
    d\tilde t
    +\\
    &+
    \mathfrak{m}
    \tau_0^2
    \lambda^2
    \int_0^t
    \!\!\!
    e^{(\R - 2\lambda) \tilde  t} 
    \mathcal{D}_{s+1}
    (\tilde t)
    d\tilde t
    +
    \mathfrak{m}^2
    \tau_0^3
    \lambda^3
    \int_0^t
    \!\!\!
    e^{(\R - 3\lambda) \tilde  t}     
    \mathcal{D}_{s+\frac{3}{2}}
    (\tilde{t})
    d\tilde{t} 
    \leq 
    \mathcal{E}_s(0)
    +
    \mathfrak{M}
    \tau_0^2
    \lambda
    \mathcal{E}_{s+\frac{1}{2}}(0)
    +\\
    &+
    \mathfrak{M}^2
    \tau_0
    \lambda^2
    \mathcal{E}_{s+1}(0)
    +
     D_s
     \max\big\{1,\mathbb{H}^2\big\}
     \Bigg\{
     \max
     \bigg\{
     \frac{1}{\sqrt{\mathbb J}},
     \frac{{\rm Pr}_m}{\sqrt{\kappa}}
     \bigg\}
     \int_0^t
     e^{\R \tilde t}
     \sqrt{\mathcal{E}_{s}(\tilde t)}
     \mathcal{D}_{ s+\frac{1}{2}}(\tilde{t})
     d\tilde t
    +\\&
    \hspace{2cm}+
    \int_0^t
    e^{\R \tilde t}
    \mathcal{E}_{s}(\tilde t)
    \mathcal{D}_{ s+\frac{1}{2}}(\tilde{t})
     d\tilde t
    +
    \max
     \bigg\{
     \frac{1}{\sqrt{\mathbb J}},
     \frac{{\rm Pr}_m}{\sqrt{\kappa}}
     \bigg\}
    \int_0^t
    e^{\R \tilde t}
    \mathcal{E}_{s}(\tilde t)
    \sqrt{ \mathcal{D}_{ s+\frac{1}{2}}(\tilde{t}) \mathcal{D}_{ s+\frac{3}{2}}(\tilde{t})}
     d\tilde t
     \Bigg\},
\end{aligned}
\end{equation}
where we have dropped the integral \eqref{integral-related-to-eta''} on the left-hand side of the inequality, since it is positive and furthermore it does not support the next steps of our analysis. Our goal is therefore to determine a suitable $\lambda>0$ in \eqref{main-estimate-with-lambda} and a suitable parameter $\varepsilon_s>0$ at the smallness condition \eqref{thm:small-condition} of the initial data, such that all integrals in the third line of \eqref{main-estimate-with-lambda} can be absorbed by the dissipative terms of the inequality. Consequently, we shall first reformulate these integrals in accordance with the dissipative terms. First
\begin{equation}\label{estimate-of-the-first-two-integrals}
\begin{alignedat}{4}
    &
     \int_0^t
     e^{\R \tilde t}
     \sqrt{\mathcal{E}_{s}(\tilde t)}
     \mathcal{D}_{ s+\frac{1}{2}}(\tilde{t})
     d\tilde t
     &&\leq 
    \sup_{\tilde t\in (0,t)}
     \Big(
        e^{2\lambda\tilde  t}\mathcal{E}_{s}(\tilde t)
    \Big)^\frac{1}{2}
     \int_0^t
     e^{(\R  -\lambda )\tilde t}
     \mathcal{D}_{ s+\frac{1}{2}}(\tilde{t})
     d\tilde t,\\
     &\int_0^t
      e^{\R \tilde t}
     \mathcal{E}_{s}(\tilde t)
     \mathcal{D}_{ s+\frac{1}{2}}(\tilde{t})
     d\tilde t
     &&\leq 
     \sup_{\tilde t\in (0,t)}
     \Big(
        e^{\lambda t}\mathcal{E}_{s}(\tilde t)
    \Big)
     \int_0^t
     e^{(\R  -\lambda )\tilde t}
     \mathcal{D}_{ s+\frac{1}{2}}(\tilde{t})
     d\tilde t.
\end{alignedat}
\end{equation}
We hence remark that we can bound $ e^{2\lambda t}\mathcal{E}_{s}(\tilde t)$ and $e^{\lambda t}\mathcal{E}_{s}(\tilde t)$ with the energy $e^{\R t}\mathcal{E}_s(t)$ in \eqref{main-estimate-with-lambda}, as long as $\lambda>0$ is considered within the range  $\lambda\leq \R/2 = 1/(8\max\{ 1, \mathbb J, \kappa/{\rm Pr}_m\})$.
The major difficulties arise however from the last integrals at the fourth line of \eqref{main-estimate-with-lambda}, since this term involves the dissipation $\mathcal{D}_{s+3/2}$, which has indeed the highest Sobolev regularity. We deal with this integral, by observing that
\begin{equation}\label{estimate-of-the-third-integral} 
\begin{aligned}
     \max
     \bigg\{
    & \frac{1}{\sqrt{\mathbb J}},
     \frac{{\rm Pr}_m}{\sqrt{\kappa}}
     \bigg\}
    \int_0^t
    e^{\R \tilde t}\mathcal{E}_{s}(\tilde t)
    \sqrt{ \mathcal{D}_{ s+\frac{1}{2}}(\tilde{t}) 
    \mathcal{D}_{ s+\frac{3}{2}}(\tilde{t})}
    d\tilde t
    = \\
    &=
    \max
     \bigg\{
     \frac{1}{\sqrt{\mathbb J}},
     \frac{{\rm Pr}_m}{\sqrt{\kappa}}
     \bigg\}
    \int_0^t
    e^{2\lambda \tilde t}\mathcal{E}_{s}(\tilde t)
    \sqrt{e^{(\R- \lambda) \tilde t} \mathcal{D}_{ s+\frac{1}{2}}(\tilde{t}) }
    \sqrt{
        e^{(\R- 3\lambda) \tilde t} 
        \mathcal{D}_{ s+\frac{3}{2}}(\tilde{t})
    }
    d\tilde t,\\
    &\leq
    \sup_{\tilde t\in (0,t)}
     \Big(
        e^{2\lambda t}\mathcal{E}_{s}(\tilde t)
    \Big)
    \Bigg\{
    \int_0^t
    e^{(\R- \lambda) \tilde t} 
    \mathcal{D}_{ s+\frac{1}{2}}(\tilde{t}) 
    d\tilde t
    +
    \frac{1}{4}
    \max
     \bigg\{
     \frac{1}{ \mathbb J},
     \frac{{\rm Pr}_m^2}{\kappa}
     \bigg\}
    \int_0^t
        e^{(\R- 3\lambda) \tilde t} 
        \mathcal{D}_{ s+\frac{3}{2}}(\tilde{t})
    d\tilde t
    \Bigg\},
\end{aligned}
\end{equation}
where we can still bound $ e^{2\lambda t}\mathcal{E}_{s}(\tilde t)$ with the energy $e^{\R t}\mathcal{E}_s(t)$, as long as $\lambda\leq \R/2$.

\noindent 
We are now in the condition to set a specific value of $\lambda$, namely
\begin{equation}\label{def-lambda}
    \lambda := \frac{\mathcal R}{4} 
     = \frac{1}{16\max \{ 1, \mathbb J, \kappa/{\rm Pr}_m\}},
     \quad \text{namely}
     \quad 
     \eta(t) = \tau_0\Big(1 - e^{-\frac{t}{16\max \{ 1, \mathbb J, \kappa/{\rm Pr}_m\}}}\Big)
\end{equation}
(we do not consider the threshold $\lambda = \R/2$, since the exponential function in front of $\mathcal{E}_{s+1}$ in \eqref{main-estimate-with-lambda} would in that case vanish, not allowing us to derive an exponential decay of the related norms). In particular, coupling \eqref{estimate-of-the-first-two-integrals} and \eqref{estimate-of-the-third-integral} together with \eqref{main-estimate-with-lambda}, we obtain
\begin{equation}\label{final-estimate-lambda=1/12}
\begin{aligned} 
    &e^{\R t}
    \mathcal{E}_{s}(t)
    +
    \frac{\mathfrak{m}\R \tau_0 }{4}
    e^{\frac{3}{4}\R t}
    \mathcal{E}_{s+\frac{1}{2}}(t)
    +
    \frac{ \mathfrak{m}^2\R^2\tau_0}{4^2}
    e^{\frac{\R}{2}t}
    \mathcal{E}_{s+1}(t)
    +\\ 
    &+
    \bigg(
    \frac{\tau_0\R}{4}
    -
    D_s
    \max\big\{1,\mathbb{H}^2\big\}
    \sup_{\tilde t \in (0,t)}
    \bigg\{
    \max
     \bigg\{
     \frac{1}{\sqrt{\mathbb J}},
     \frac{{\rm Pr}_m}{\sqrt{\kappa}}
     \bigg\}
    \Big(
     e^{\R \tilde t}\mathcal{E}_{s}(\tilde t)
     \Big)^\frac{1}{2}
     \!\!\!+
     2
     e^{\R \tilde t}\mathcal{E}_{s}(\tilde t)
     \bigg\}
    \bigg)
    \int_0^t
    e^{\frac{3\R}{4} \tilde t }
    \mathcal{D}_{s+\frac{1}{2}}
    (\tilde t)
    d\tilde t
    +\\ 
    &+\!
    \bigg(
    \!
    \frac{\mathbf m^2\tau_0^3\R^3}{4^3}
    \!-\!
    \frac{D_s}{4}  \max\big\{1,\!\mathbb{H}^2\big\}
    \max
     \bigg\{
     \frac{1}{\mathbb J},
     \frac{{\rm Pr}_m^2}{ \kappa}
     \bigg\}
    \sup_{\tilde t \in (0,t)}
    \!\! e^{\R t}\mathcal{E}_{s}(\tilde t)
    \!
    \bigg)
    \int_0^t
    \!\!\!
    e^{\frac{\R}{4}\tilde t}     
    \mathcal{D}_{s+\frac{3}{2}}
    (\tilde{t})
    d\tilde{t} 
    \\ 
    &\leq 
     \mathcal{E}_{s}(0)
    \!+\!
    \frac{\mathfrak{M}\R \tau_0 }{4}
   \mathcal{E}_{s+\frac{1}{2}}(0)
    \!+\!
    \frac{ \mathfrak{M}^2\R^2\tau_0^2}{4^2}
    \mathcal{E}_{s+1}(0),
\end{aligned}
\end{equation}
where we have omitted to write the positive dissipative integrals in $\mathcal{D}_s$ and $\mathcal{D}_{s+1}$ on the left-hand side.
Now, we remark that $\mathcal{E}_{s}(0)
    \!+\!
    \frac{\mathfrak{M}\R \tau_0 }{4}
   \mathcal{E}_{s+\frac{1}{2}}(0)
    \!+\!
    \frac{ \mathfrak{M}^2\R^2\tau_0^2}{4^2}
    \mathcal{E}_{s+1}(0)$  can be estimated in terms of the initial data and the smallness condition \eqref{thm:small-condition}. Indeed, recalling the definition of $\mathcal{E}_s$, $\mathcal{E}_{s+1/2}$ and  $\mathcal{E}_{s+1}$ in \eqref{def:Eetas}, we observe that
\begin{align*}
    \mathcal{E}_s(0) 
    &\leq 
    \max 
    \Big\{
        1, \mathbb J^2, \Big(\frac{\kappa}{{\rm Pr_m}}\Big)^2
    \Big\}
    \max 
    \Big\{
        1,\frac{1}{{\rm Pr_m}}
    \Big\}
    \bigg(
        \frac{3}{2}
        \| \tilde u \|_{H^{s,0}}^2+ 
        \frac{1}{2}
        \|\bar u \|_{H^{s,0}}^2+ \\
        &\hspace{3cm}
        +
        \|\partial_y \bar u \|_{H^{s,0}}^2+
        \frac{3}{2}
        \| \tilde b_1 \|_{H^{s,0}}^2+ 
        \frac{1}{2}
        \|\bar b_1 \|_{H^{s,0}}^2+ 
        \|\partial_y \bar b_1 \|_{H^{s,0}}^2
    \bigg)\\
    &\leq 
    2
    \max 
    \Big\{
        1, \mathbb J, \frac{\kappa}{{\rm Pr_m}}
    \Big\}^2
    \max 
    \Big\{
        1,\frac{1}{{\rm Pr_m}}
    \Big\}
    \Big(
        \| \tilde u \|_{H^{s,0}}+ 
        \|\bar u \|_{H^{s+1,0}}+ \\
        &\hspace{3cm}
        +
        \|\partial_y \bar u \|_{H^{s,0}}+
        \| \tilde b_1 \|_{H^{s,0}}+ 
        \|\bar b_1 \|_{H^{s+1,0}}+ 
        \|\partial_y \bar b_1 \|_{H^{s,0}}
    \Big)^2\\
    &\leq 
    2
    \max 
    \Big\{
        1, \mathbb J ,  \frac{\kappa}{{\rm Pr_m}} 
    \Big\}^2
    \max 
    \Big\{
        1,\frac{1}{{\rm Pr_m}}
    \Big\}
    \delta^2,
\end{align*}
where we recall that $\delta>0 $ is the small parameter bounding the norms of the initial data and it is defined in \eqref{thm:small-condition} by
\begin{equation*}
    \delta :=
        \bigg(
        \frac{\min \{  1, \mathbb J, \kappa/{\rm Pr}_m\}^\frac{3}{2}}
        {\max \{  1, \mathbb J, \kappa/{\rm Pr}_m\}^\frac{5}{2}}
        \frac{
        \min\{\tau_0,\tau_0^{-1}\}^\frac{3}{2}}{\max\{1, \mathbb{H}^2\}\max\{{\rm Pr_m^{-1}}, {\rm Pr}_m\}^\frac{1}{2}}
        \bigg)
        \varepsilon_s,
\end{equation*}
for a small parameter $\varepsilon_s$ that depends only on $s>2$ (and that we have not determined, yet).
Similarly, we have that $
    \mathcal{E}_{s+1/2}(0)
    \leq (1/2)\delta^2$ and $ 
    \mathcal{E}_{s+1}(0)
    \leq 
    \delta^2$,
Hence, we obtain the following estimate of the right-hand side in \eqref{final-estimate-lambda=1/12}:
\begin{equation}\label{estimate-E(0)}
\begin{aligned}
    \mathcal{E}_{s}(0)
    &+
    \frac{\mathfrak{M}\R \tau_0 }{4}
    \mathcal{E}_{s+\frac{1}{2}}(0)
    +
    \frac{ \mathfrak{M}^2\R^2\tau_0^2}{4^2}
    \mathcal{E}_{s+1}(0)
    \leq 
    \mathcal{E}_s(0)
    +
    \frac{\tau_0}{4^2}
    \mathcal{E}_{s+\frac{1}{2}}(0)
    +
    \frac{\tau_0^2}{4^4}
    \mathcal{E}_{s+1}(0)\\
    &\leq 
    \max\{1, \tau_0\}^2
    \bigg(
        2
    \max 
    \Big\{
        1, \mathbb J, \frac{\kappa}{{\rm Pr_m}}
    \Big\}^2
    \max 
    \Big\{
        1,\frac{1}{{\rm Pr_m}}
    \Big\}
    \delta ^2 + \frac{1}{4^2}\delta ^2 + \frac{1}{4^4}\delta ^2 
    \bigg)
    \\
    &\leq 
    4\max\{1, \tau_0\}^2
    \max 
    \Big\{
        1, \mathbb J, \frac{\kappa}{{\rm Pr_m}}
    \Big\}^2
    \max 
    \Big\{
        1,\frac{1}{{\rm Pr_m}}
    \Big\}
    \delta^2\\
    &\leq 
    \frac{
        4 
        \max\{1, \tau_0\}^2 
        \min\{\tau_0,\tau_0^{-1}\}^3\min\{1, \mathbb J, \kappa/{\rm Pr_m}\}^3}
        {
        \max\{1, \mathbb H^2\}^2
        \max\{1, \mathbb J, \kappa/{\rm Pr_m}\}^3
        \max\{{\rm Pr_m^{-1}}, {\rm Pr_m}\}
        \min\{1, {\rm Pr_m}\}}
     \varepsilon_s^2\\
    &\leq 
    \frac{
        4 
        \min\{1, \tau_0\}^3\min\{1, \mathbb J, \kappa/{\rm Pr_m}\}^3}
        {
        \max\{1, \mathbb H^2\}^2
        \max\{1, \mathbb J, \kappa/{\rm Pr_m}\}^3
        \max\{1, {\rm Pr_m}\}}
     \varepsilon_s^2.
    \end{aligned}
\end{equation}
where, in the last line, we have used the identity $ \max\{{\rm Pr_m^{-1}}, {\rm Pr_m}\}
        \min\{1, {\rm Pr_m}\} = \max\{ 1,  {\rm Pr_m}\} $, as well as the inequality
$\max\{1, \tau_0\} ^2
        \min\{\tau_0,\tau_0^{-1}\}^3\leq \min\{1,\tau_0\}^2$.
Inequality \eqref{final-estimate-lambda=1/12} together with \eqref{estimate-E(0)} allow us to conclude the proof by means of a bootstrap method. Indeed, by introducing the small parameter $\varepsilon_s>0$ in \eqref{thm:small-condition} and a maximal time $T^*\in (0,T)$, such that
\begin{equation}\label{def-esp_s-T*}
\begin{aligned}
    \varepsilon_s^2 
    &:= \frac{1}{ 2^{16}  D_s^2}
    \qquad
    \bigg(
    \text{thus }\varepsilon_s := 
      \frac{s-2}{2^{2s+14}}
      \frac{1}{1+\frac{s-2}{\sqrt{s-1}}},
      \text{ because of \eqref{Ds-explicit-not-proven}}
    \bigg)  
    ,
    \\
    T^* 
    &:= 
    \sup 
    \bigg\{t\in (0,T)\,:\, e^{\R t}\mathcal{E}_s(t) 
    \leq 
    \frac{1}{2^{12}}
     \frac{\min\{1, \tau_0\}^3\min\{1, \mathbb J, \kappa/{\rm Pr_m}\}^3}{D_s^2 \max\{1, \mathbb H^2\}^2
     \max\{1, \mathbb J, \kappa/{\rm Pr_m}\}^3\max\{1, {\rm Pr_m}\}}<1
    \bigg\},
\end{aligned}    
\end{equation}
we have that, for any time $t \in (0,T^*)$, the constants in front of the dissipative terms of \eqref{final-estimate-lambda=1/12} are indeed positive:
\begin{align*}
    &
    \frac{\tau_0\R}{4}
    -
    D_s
    \max\big\{1,\mathbb{H}^2\big\}
    \sup_{\tilde t \in (0,t)}
    \bigg\{
    \max
     \bigg\{
     \frac{1}{\sqrt{\mathbb J}},
     \frac{{\rm Pr}_m}{\sqrt{\kappa}}
     \bigg\}
    \Big(
     e^{\R \tilde t}\mathcal{E}_{s}(\tilde t)
     \Big)^\frac{1}{2}
     \!\!\!+
    2
     e^{\R \tilde t}\mathcal{E}_{s}(\tilde t)
     \bigg\}=\\
     &=
    \frac{\tau_0}{4^2\max\{1, \mathbb J, \kappa/{\rm Pr_m}\}}
    -
    D_s
    \max\big\{1,\mathbb{H}^2\big\}
    \sup_{\tilde t \in (0,t)}
    \bigg\{
     \Big(
     e^{\R \tilde t}\mathcal{E}_{s}(\tilde t)
     \Big)^\frac{1}{2}
    \bigg(
    \max
     \bigg\{
     \frac{1}{\sqrt{\mathbb J}},
     \frac{{\rm Pr}_m}{\sqrt{\kappa}}
     \bigg\}
     +
    2
    \Big(
     e^{\R \tilde t}\mathcal{E}_{s}(\tilde t)
     \Big)^\frac{1}{2}
     \bigg)
     \bigg\}\\
     &\geq 
    \frac{\tau_0}{4^2\max\{1, \mathbb J, \kappa/{\rm Pr_m}\}}
    -
    D_s
    \max\big\{1,\mathbb{H}^2\big\}
    \sup_{\tilde t \in (0,t)}
    \bigg\{
     \Big(
     e^{\R \tilde t}\mathcal{E}_{s}(\tilde t)
     \Big)^\frac{1}{2}
    \bigg\}
    \bigg(
    \max
     \bigg\{
     \frac{1}{\sqrt{\mathbb J}},
     \frac{{\rm Pr}_m}{\sqrt{\kappa}}
     \bigg\}
     +
     2
    \bigg)\\
     &\geq 
    \frac{\tau_0}{4^2\max\{1, \mathbb J, \kappa/{\rm Pr_m}\}}
    -
    D_s
    \max\big\{1,\mathbb{H}^2\big\}
    \sup_{\tilde t \in (0,t)}
    \bigg\{
     \Big(
     e^{\R \tilde t}\mathcal{E}_{s}(\tilde t)
     \Big)^\frac{1}{2}
    \bigg\}
   \frac{4\max\{1, {\rm Pr_m}\}^\frac{1}{2}}{\min\{1,  {\mathbb J},  {\kappa/{\rm Pr_m}}\}^\frac{1}{2}}\\
     &\geq 
    \frac{\tau_0}{4^2\max\{1, \mathbb J, \kappa/{\rm Pr_m}\}}
    -
     \frac{
     D_s
    \max\big\{1,\mathbb{H}^2\big\}
     \min\{1, \tau_0\}^\frac{3}{2}\min\{1, \mathbb J, \kappa/{\rm Pr_m}\}^\frac{3}{2}
     }
     {
     2^{6}
     D_s \max\{1, \mathbb H^2\}
     \max\{1, \mathbb J, \kappa/{\rm Pr_m}\}^\frac{3}{2}
     \max\{1, {\rm Pr_m}\}^\frac{1}{2}
     }
   \frac{4\max\{1, {\rm Pr_m}\}^\frac{1}{2}}{\min\{1,  {\mathbb J},  {\kappa/{\rm Pr_m}}\}^\frac{1}{2}}
   \\
    &\geq 
    \frac{\min\{1, \tau_0\}}{\max\{1, \mathbb J, \kappa/{\rm Pr_m}\}}
    \bigg(
    \frac{1}{4^2}
    -
    \frac{1}{2^{4}}
    \frac{\min\{1, \tau_0\}^\frac{1}{2}\min\{1, \mathbb J, \kappa/{\rm Pr_m}\} }{
     \max\{1, \mathbb J, \kappa/{\rm Pr_m}\}^\frac{1}{2}}
     \bigg)
     \geq 0
\end{align*}
and
\begin{align*}
    &\frac{\mathbf m^2\tau_0^3\R^3}{4^3} 
    \!-\!
    \frac{D_s}{4}  \max\big\{1,\!\mathbb{H}^2\big\}
    \max
    \bigg\{\frac{1}{\mathbb J}, \frac{\rm Pr_m^2}{\kappa}
    \bigg\}
    \sup_{\tilde t \in (0,t)}
    \!\! e^{\R t}\mathcal{E}_{s}(\tilde t)
    \\
    &
    \geq 
    \frac{\tau_0^3\min\{1, \mathbb J, \kappa/{\rm Pr_m}\}^2}{4^6\max\{1, \mathbb J, \kappa/{\rm Pr_m}\}^3}
    \!-\!
    \frac{D_s\max\big\{1,\!\mathbb{H}^2\big\}\max\big\{1,\! {\rm Pr_m}\big\}}{4\min\{1, \mathbb J, \kappa/{\rm Pr_m}\}}
     \frac{\min\{1, \tau_0\}^3\min\{1, \mathbb J, \kappa/{\rm Pr_m}\}^3}{2^{12}D_s^2 \max\{1, \mathbb H^2\}^2
     \max\{1, \mathbb J, \kappa/{\rm Pr_m}\}^3\max\{1, {\rm Pr_m}\}}
    \\
    &\geq 
    \frac{\tau_0^3\min\{1, \mathbb J, \kappa/{\rm Pr_m}\}^2}{4^6\max\{1, \mathbb J, \kappa/{\rm Pr_m}\}^3}
    -  
    \frac{
        \min\{1, \tau_0\}^3\min\{1, \mathbb J, \kappa/{\rm Pr_m}\}^2
    }
    {
        2^{14}\max\{1, \mathbb J, \kappa/{\rm Pr_m}\}^3
    }
    \geq 0.
\end{align*}
Hence the energy inequality \eqref{final-estimate-lambda=1/12} and the estimate \eqref{estimate-E(0)} at initial time $t=0$ imply that the functional $e^{t/4}\mathcal{E}_{s}(t)$ stays small for any $t\in (0,T^*)$:
\begin{align*} 
     e^{\R t}
    \mathcal{E}_{s}(t)
    \leq 
    \frac{1}{2^{14}}
     \frac{\min\{1, \tau_0\}^3\min\{1, \mathbb J, \kappa/{\rm Pr_m}\}^3}{
     D_s^2\max\{1, \mathbb H^2\}^2
     \max\{1, \mathbb J, \kappa/{\rm Pr_m}\}^3\max\{1, {\rm Pr_m}\}}
    .
\end{align*}
From the definition of $T^*$ in \eqref{def-esp_s-T*}, we finally deduce that $T^* = T$, which must be $\infty$ since the analytic norm does not blow up at this time. Accordingly, the solution $(u,b_1)$ 
is indeed global in time.

\noindent 
Finally, inequality \eqref{thm-analysis:main-inequality} about the exponential decay of the norms of the solution follows directly from the estimate \eqref{final-estimate-lambda=1/12}, which implies 
\begin{equation*}
\begin{aligned}
    e^{\frac{\R}{2} t}
    \min
    &\Big\{
        1, \frac{\mathfrak{m}\R}{4}
    \Big\}^2
    \min\{1, \tau_0\}
    \Big(
     \mathcal{E}_s(t) + 
     \mathcal{E}_{s+1/2}(t)
     +
     \mathcal{E}_{s+1}(t)
    \Big)
    \leq \\
    &\leq
    \max\{1, \tau_0\}^2
    \Big(
     \mathcal{E}_s(0) + 
     \mathcal{E}_{s+1/2}(0)
     +
     \mathcal{E}_{s+1}(0)
    \Big).
\end{aligned}
\end{equation*}
We hence achieve \eqref{thm-analysis:main-inequality}, by manipulating $\mathfrak{m} = \min \{ \mathbb J, \kappa/{\rm Pr_m} \}$ and $\R = 1/(4\max\{1, \mathbb J, \kappa/{\rm Pr_m} \})$, as well as by recalling the definition of the functions $ \mathcal{E}_s$ $\mathcal{E}_{s+1/2}$ and  $\mathcal{E}_{s+1}$ in \eqref{def:Eetas}.
This concludes the proof of Theorem~\ref{thm:existence-analytic-solutions}. 

\section{Proof of Proposition \ref{prop:the-overall-final-estimate-in-eta}}
\label{sec:proof-proposition-est}$\,$

\noindent
The core of our approach being showed, it remains to prove the ``high-order'' energy estimate described by Proposition \ref{prop:the-overall-final-estimate-in-eta}. 

\subsection{Estimates related to the equation of $u$}$\,$

\noindent 
In this section we deal with the momentum equation of $u$ in the main system \eqref{eq:MHD-Prandtl-without-e}, which satisfies
\begin{equation}\label{eq:u-sec:final-estimates}
\begin{aligned}
    \mathbb{J}
        \partial_{tt} u
        +
        \partial_t u + 
        u \partial_x u +
        v \partial_y u - 
        \partial_{yy}^2 u + \partial_x p = 
        \mathbb{H}^2
        \Big\{
        b_1 
        b_2
        v  - 
        u 
        b_2^2
        +b_2
        \big( 
            \smallint_0^y
            \partial_t b_1 
        \big)
        \Big\}.
\end{aligned}
\end{equation}
Recalling the definition of $u_{\eta}(t, \cdot)  = e^{(\tau_0-\eta(t) )(1+|D_x|)}u(t,\cdot)$ and the value  $\mathcal{R} =
1/ (4\max\{1, \mathbb J, \kappa/{\rm Pr_m}\})$, we remark that the function $e^{\mathcal{R}t/2} u_{\eta}(t,x,y) $ is solution of
\begin{equation}\label{eq:ueta}
\begin{aligned}
    \mathbb{J}
    \partial_t
    \big( 
        e^{\frac{\mathcal{R}}{2}t}
        (\partial_t u )_\eta
    \big)
    &
    +
    \mathbb{J}
    e^{\frac{\mathcal{R}}{2}t}
    \eta'(t) (1+|D_x|)
    (\partial_t  u)_{\eta}
    +
    e^{\frac{\mathcal{R}}{2}t}
    (\partial_t  u)_{\eta} 
    +
    e^{\frac{\mathcal{R}}{2}t}
     (u\partial_xu)_{\eta}
     +\\
     &+
    e^{\frac{\mathcal{R}}{2}t}
    (v\partial_yu)_{\eta}+
    e^{\frac{\mathcal{R}}{2}t}
    \partial_xp_{\eta}-\pa_y^2\big(
    e^{\frac{\mathcal{R}}{2}t}
    u_{\eta}
    \big)
    =
    \frac{\mathbb J \R }{2}
    e^{\frac{\mathcal{R}}{2}t}
    (\partial_t u)_{\eta}
    +
    e^{\frac{\mathcal{R}}{2}t}
    F_{\eta},
\end{aligned}
\end{equation}
where the forcing term $F_\eta$ in \eqref{eq:MHD-Prandtl-without-e} is generated by applying the Fourier multiplier $ e^{(\tau_0-\eta(t) )(1+|D_x|)}$ to the right-hand side of \eqref{eq:u-sec:final-estimates}:
\begin{equation}\label{Feta}
    F_{\eta} = 
        \mathbb H^2
        (
        b_1 
        b_2 
        v
        )_\eta 
        - 
        \mathbb H^2
        (
        u 
        b_2^2
        )_\eta 
        +
        \mathbb H^2
        \big(
        b_2
        \big( 
            \smallint_0^y
            \partial_t b_1 
        \big)
         \big)_\eta.
\end{equation}
We can further derive an equivalent form of this equation, by developing the time derivative $e^{\mathcal R t/2} (\partial_t u )_\eta$ in the third term of \eqref{eq:ueta}  by means of $e^{\mathcal Rt/2} (\partial_t (u _\eta)+\eta'(t)(1+|D_x|)u_\eta)$. Thus equation \eqref{eq:ueta} can also be recasted as
\begin{equation}\label{eq:ueta2}
\begin{aligned}
    \partial_t
    \big( 
    e^{\frac{\R }{2}t}
    \big(
    \mathbb{J}
    (\partial_t u )_\eta + u_\eta
    \big)
    \big)
    &+
    e^{\frac{\R }{2}t}
    \eta'(t) 
    (1+|D_x|)
    \big(
    \mathbb J
        (\partial_t  u)_{\eta}
        + u_\eta
    \big)
    +
    e^{\frac{\R }{2}t}
    (u\partial_xu)_{\eta}
    +\\
    &+
    e^{\frac{\R }{2}t}
    (v\partial_yu)_{\eta}+
    e^{\frac{\R }{2}t}
    \partial_xp_{\eta}-\pa_y^2\big(
    e^{\frac{\R }{2}t}
    u_{\eta}
    \big)
    =
    \frac{\R }{2}
    e^{\frac{\R }{2}t}
    \big(
        \mathbb J
        (\partial_t u)_{\eta}
        +
        u_\eta
    \big)
    +
    e^{\frac{\R }{2}t}
    F_{\eta}.
\end{aligned}
\end{equation}
Next, we take the $H^{s, 0}$-inner product between \eqref{eq:ueta} and $ e^{\R t/2}  \mathbb J( \partial_t u)_\eta$ and adding the result with the $H^{s, 0}$-inner product between \eqref{eq:ueta2} and $e^{\R t/2}  (\mathbb J( \partial_t u)_\eta+u_\eta)$, we gather that
\begin{equation}\label{energy-id-lemma-ub}
\begin{aligned}
    \frac{d}{dt}
    \Big[
        e^{\R t}
        \Big(
        \frac{\mathbb J^2}{2}
        \| 
        (\partial_t u)_{\eta}   
        \|_{H^{s,0}}^2 
        +
        \frac{1}{2}
        \|  
        \mathbb J (\partial_t u)_{\eta}   
        \!+\!
        u_\eta
        \|_{H^{s,0}}^2 
        +
        \mathbb J
        \| 
         \partial_y u_{\eta}   
        \|_{H^{s,0}}^2 
    \Big)
    \Big] 
    \!+\!
    e^{\R t}
    \Big(
    \|  \pa_y   u_{\eta}      \|_{H^{s,0}}^2
    \!+\!
    \mathbb J
    \| 
        (\partial_t u)_{\eta}   
    \|_{H^{s,0}}^2
    \Big)+\\
    +
    \eta'(t)
    e^{\R t}
    \Big\{
        \mathbb J^2
        \| 
            (\partial_t u)_{\eta}       
        \|_{H^{s+\frac12,0}}^2
        + 
        \| 
            \mathbb J (\partial_t u)_{\eta} + u_\eta       
        \|_{H^{s+\frac12,0}}^2 
        +
    2\mathbb J
     \| \partial_y u_{\eta} \|_{H^{s+\frac12, 0}}^2
    \Big\}
    =
    \R
    e^{\R t}
    \Big[ 
    \frac{\mathbb J^2}{2}
    \| (\partial_t u)_{\eta} \|_{H^{s, 0}}^2
    +\\+
    \frac{1}{2}
    \|   \mathbb J(\partial_t u)_{\eta} +  u_{\eta} \|_{H^{s, 0}}^2
    + 
    \mathbb J
    \| \partial_y u_{\eta} \|_{H^{s, 0}}^2
    \Big]
    +
    e^{\R t}
    \Big\{
    -\psca{ (u\pa_xu)_\eta,
    2 \mathbb J(\partial_t u)_\eta \!+\! u_\eta}_{H^{s,0}}
    +\\-
      \langle 
     ( 
        v
        \pa_yu
     )_\eta, 
   2\mathbb J(\partial_t u)_\eta \!+\! u_\eta\rangle_{H^{s,0}}
    \!+\!
     \psca{F_{\eta},
    2 \mathbb J(\partial_t u)_\eta \!+\! u_\eta}_{H^{s,0}}
     \!-\!
    \psca{\pa_x p_{\eta},
    2\mathbb J (\partial_t u)_\eta \!+\!u_\eta}_{H^{s,0}}\!
    \Big\}.
\end{aligned}
\end{equation}
We begin with by observing that the term in the square brackets of the right-hand side in \eqref{energy-id-lemma-ub} can be absorbed by the dissipation $e^{\R t}\big(\|  \pa_y   u_{\eta}      \|_{H^{s,0}}^2 \!+\! \mathbb J  \|  (\partial_t u)_{\eta}    \|_{H^{s,0}}^2\big)$ of the left-hand side. Indeed, since the value of $\mathcal R $ is smaller than $\min \{1/4, 1/(4\mathbb J)\}$, we have that
\begin{align*}
    \R
    e^{\R t}
    \Big[ 
    \frac{\mathbb J^2}{2}
    \| (\partial_t u)_{\eta} \|_{H^{s, 0}}^2
    &
    \!+\!
    \frac{1}{2}
    \|   \mathbb J(\partial_t u)_{\eta}\! + \! u_{\eta} \|_{H^{s, 0}}^2
    \!+\! 
    \mathbb J
     \| \partial_y u_{\eta} \|_{H^{s, 0}}^2
    \Big]
    \!\!\leq\!
    \R
    e^{\R t}
    \Big[ 
    \frac{3\mathbb J^2}{2}
    \| (\partial_t u)_{\eta} \|_{H^{s, 0}}^2
    \!+\!
    \|  u_{\eta} \|_{H^{s, 0}}^2
    \! \!+\! 
    \mathbb J
     \| \partial_y u_{\eta} \|_{H^{s, 0}}^2
    \Big]\\
    &\leq
    \frac{3\mathbb J}{8}
    e^{\R t}
    \| (\partial_t u)_{\eta} \|_{H^{s, 0}}^2
    +
    \frac{1}{4}
    e^{\R t}
    \|  u_{\eta} \|_{H^{s, 0}}^2
    + 
    \frac{1}{4}
    e^{\R t}
    \| \partial_y u_{\eta} \|_{H^{s, 0}}^2.
\end{align*}
To absorb this last term, we shall then invoke the Poincar\'e inequality in $y\in (0,1)$: $\| u_{\eta} \|_{H^{s, 0}} \leq \| \partial_y u_{\eta} \|_{H^{s, 0}} $ (here the homogeneous boundary condition $u = 0$ in $y = 0$ comes into play). Thus 
\begin{equation*}
    \begin{aligned}
    \R
    e^{\R t}
    \Big[ 
    \frac{\mathbb J^2}{2}
    \| (\partial_t u)_{\eta} \|_{H^{s, 0}}^2
    \!+\!
    \frac{1}{2}
    \|   \mathbb J(\partial_t u)_{\eta}\! + \! u_{\eta} \|_{H^{s, 0}}^2
    \!+\! 
    \mathbb J
     \| \partial_y u_{\eta} \|_{H^{s, 0}}^2
    \Big]
    \leq 
    e^{\R t}
    \Big(
    \frac{\mathbb J}{2}
    \| 
        (\partial_t u)_{\eta}   
    \|_{H^{s,0}}^2
    \!+\!
    \frac 12 
    \|  \pa_y   u_{\eta}      \|_{H^{s,0}}^2
    \Big),
    \end{aligned}
\end{equation*}
which corresponds to half dissipation on the left-hand side of \eqref{energy-id-lemma-ub}.

\noindent 
We can summarise what obtained with the following estimate:
\begin{equation}\label{energy-id-lemma-u}
\begin{aligned}
    \frac{d}{dt}
    \Big[
        e^{\R t}
        \Big(
        \frac{\mathbb J^2}{2}
        \| 
        (\partial_t u)_{\eta}   
        \|_{H^{s,0}}^2 
        \!+\!
        \frac{1}{2}
        \|  
        \mathbb J (\partial_t u)_{\eta}   
        \!+\!
        u_\eta
        \|_{H^{s,0}}^2 
        +
        \mathbb J
        \| 
         \partial_y u_{\eta}   
        \|_{H^{s,0}}^2 
    \Big)
    \Big] 
    \!\!+\!
    \frac{1}{2}
    e^{\R t}
    \Big(
    \|  \pa_y   u_{\eta}      \|_{H^{s,0}}^2
    \!+\!
    \mathbb J\| 
        (\partial_t u)_{\eta}   
    \|_{H^{s,0}}^2
    \Big)+\\
    +
    \eta'(t)
   e^{\R t}
     \Big\{
        \mathbb J^2
        \| 
            (\partial_t u)_{\eta}       
        \|_{H^{s+\frac12,0}}^2
        + 
        \| 
            \mathbb J (\partial_t u)_{\eta} + u_\eta       
        \|_{H^{s+\frac12,0}}^2 
        +
     2\mathbb J\| \partial_y u_{\eta} \|_{H^{s+\frac12, 0}}^2
    \Big\}
    \leq \\
    \leq 
    e^{\R t}
    \Big\{
    -\psca{ (u\pa_xu)_\eta,
    2 \mathbb J(\partial_t u)_\eta \!+\! u_\eta}_{H^{s,0}}
    +
      \langle 
    (
        v
        \pa_yu
    )_\eta, 
   2\mathbb J(\partial_t u)_\eta \!+\! u_\eta\rangle_{H^{s,0}}
    \!+\\
    +
     \psca{F_{\eta},
    2 (\partial_t u)_\eta \!+\! u_\eta}_{H^{s,0}}
     \!-\!
    \psca{\pa_x p_{\eta},
    2\mathbb J (\partial_t u)_\eta \!+\!u_\eta}_{H^{s,0}}\!
    \Big\}.
\end{aligned}
\end{equation}
The left-hand side of \eqref{energy-id-lemma-u} already provides information on several norms of the solution. However these norms are of Sobolev regularities lower than $s+1/2$, while our final estimate \eqref{ineq-proposition-Es} in Proposition \ref{prop:the-overall-final-estimate-in-eta} incorporates higher regularities, such as $H^{s+3/2, 0}$ in $\mathcal{D}_{s+3/2}$. We shall therefore perform a further development of the above inequality. To this end, we first isolate
$\frac{\eta'(t)}{2}e^{\R t}(\mathbb J^2\big\|  (\partial_t u )_\eta  \big\|_{H^{s+1/2, 0}}^2+\big\| \mathbb J (\partial_t u )_\eta+ u_\eta \big\|_{H^{s+1/2, 0}}^2)$ 
at the second line of 
\eqref{energy-id-lemma-u} and, recalling the formula  
$(\partial_t u )_\eta = \partial_t (u_\eta) + \eta'(t) (1+|D_x|) u_\eta$, we remark that
\begin{align*}
    &
    \frac{\eta'(t)}{2}
    e^{\R t}
    \Big\{
    \mathbb J^2
    \big\| 
        (\partial_t u )_\eta  
    \big\|_{H^{s+\frac{1}{2}, 0}}^2 
    +
    \| 
        \mathbb J (\partial_t u)_{\eta} + u_{\eta}       
    \|_{H^{s+\frac12,0}}^2\Big\} = \\
    &=
   \frac{\eta'(t)}{2}
   e^{\R t}
    \Big\{
    \mathbb J^2
    \big\| 
        \partial_t (u_{\eta} ) + \eta'(t) (1+|D_x|)u_{\eta}   
    \big\|_{H^{s+\frac{1}{2}, 0}}^2
     +
    \| 
       \mathbb J \partial_t (u_{\eta}) 
        +
        \mathbb J
        \eta'(t)
        (1+|D_x|)u_{\eta}
        + u_{\eta}       
    \|_{H^{s+\frac12,0}}^2\Big\}
    \\
    &=
   \frac{\eta'(t)}{2}
   e^{\R t}
    \bigg\{
    \mathbb J^2
    \big\| 
        \partial_t (u_{\eta})
    \big\|_{H^{s+\frac{1}{2}, 0}}^2 
    +
    \mathbb J^2
    \eta'(t)^2
    \big\| 
        u_{\eta}
    \big\|_{H^{s+\frac{3}{2}, 0}}^2   
    + 
    \mathbb J^2
    2\eta'(t) 
    \langle 
        \partial_t (u_{\eta}), 
        u_{\eta} 
    \rangle_{H^{s+1, 0}}
    +
    \mathbb J^2
    \big\| 
        \partial_t (u_{\eta})
    \big\|_{H^{s+\frac{1}{2}, 0}}^2
    +
    \\
    &+
    \mathbb J^2
    \eta'(t)^2
    \big\| 
        u_{\eta}
    \big\|_{H^{s+\frac{3}{2}, 0}}^2
    +
    \| u_{\eta}\|_{H^{s+\frac{1}{2}, 0}}^2
    +
    \mathbb J^2
    2\eta'(t) 
    \langle 
        \partial_t (u_{\eta}), 
         u_{\eta} 
    \rangle_{H^{s+1, 0}}
    +
    \mathbb J
    2
    \langle 
        \partial_t (u_{\eta}), 
        u_{\eta} 
    \rangle_{H^{s+\frac{1}{2}, 0}}
    \!+\!
    \mathbb J
    2\eta'(t)
    \|u_{\eta} \|_{H^{s+1, 0}}^2
    \bigg\}.
\end{align*}
In this last identity, we have used the relation $\| (1+|D_x|)u_{\eta} \|_{H^{s+\frac{1}{2}, 0}} = \| u_{\eta} \|_{H^{s+\frac{3}{2}, 0}}$, as well as the inner products  $\langle \partial_t (u_{\eta}), (1\!+\!|D_x|)u_{\eta} \rangle_{H^{s+\frac 12, 0}} = \langle \partial_t (u_{\eta}), u_{\eta} \rangle_{H^{s+1, 0}}$ and
$\langle (1 \!+\!|D_x|)u_{\eta}, \,u_{\eta} \rangle_{H^{s+\frac 12, 0}} =  
\|u_{\eta} \|_{H^{s+1, 0}}^2$. Thus, the isolated term satisfies
\begin{align*}
   \frac{\eta'(t)}{2}
   e^{\R t}
    &\Big\{
    \mathbb J^2
    \big\| 
        (\partial_t u )_\eta  
    \big\|_{H^{s+\frac{1}{2}, 0}}^2 
    +
    \| 
        \mathbb J (\partial_t u)_{\eta} + u_{\eta}       
    \|_{H^{s+\frac12,0}}^2\Big\} = 
    \mathbb J^2
    e^{\R t}
    \eta'(t)^2
    \frac{d}{dt}
     \big\| 
         u_{\eta}
    \big\|_{H^{s+1, 0}}^2
    \!+\!
    \frac{\mathbb J}{2}
    e^{\R t}
    \eta'(t)
    \frac{d}{dt}
     \big\| 
         u_{\eta}
    \big\|_{H^{s+\frac{1}{2}, 0}}^2+
    \\
    &+ 
    e^{\R t}
    \bigg\{
    \mathbb J^2
    \eta'(t)
    \big\| 
        \partial_t (u_{\eta})
    \big\|_{H^{s+\frac{1}{2}, 0}}^2 
    +
    \mathbb J^2
    \eta'(t)^3
    \big\| 
         u_{\eta} 
    \big\|_{H^{s+\frac{3}{2}, 0}}^2
    +
    \frac{\eta'(t)}{2}
    \big\| 
         u_{\eta} 
    \big\|_{H^{s+\frac{1}{2}, 0}}^2
    +
    \mathbb J
    \eta'(t)^2
    \big\| 
         u_{\eta} 
    \big\|_{H^{s+1, 0}}^2
    \bigg\}.
\end{align*}
and by bringing the time derivative in front of $\mathbb J e^{\R t}\eta'(t)^2$ and $\mathbb J\eta'(t)/2$ also
\begin{equation}\label{Hs+3inb1}
\begin{aligned}
    \frac{\eta'(t)}{2}
    e^{\R t}
    \!
    &\Big\{
    \mathbb J^2
    \big\| 
        (\partial_t u )_\eta  
    \big\|_{H^{s+\frac{1}{2}, 0}}^2 
    \!\!+\!
    \| 
        \mathbb J (\partial_t u)_{\eta} \!+\! u_{\eta}       
    \|_{H^{s+\frac12,0}}^2\Big\}
    \!=\!\!
    \frac{d}{dt}
    \Big[
    \mathbb J^2
   \eta'(t)^2
    e^{\R t}
     \big\| 
         u_{\eta}
    \big\|_{H^{s+1, 0}}^2
    \!\!+\!
    \frac{\mathbb J}{2}
    \eta'(t)
    e^{\R t}
    \big\| 
        u_{\eta}
    \big\|_{H^{s+\frac{1}{2}, 0}}^2
    \Big]\\ 
    &
    -
    2
    \mathbb J^2
    e^{\R t}
    \eta'(t)\eta''(t)
     \big\| 
         u_{\eta}
    \big\|_{H^{s+1, 0}}^2
    -
    \frac{\mathbb J}{2}
    \eta''(t)
    e^{\R t}
    \big\| 
         u_{\eta}
    \big\|_{H^{s+\frac{1}{2}, 0}}^2
    +
     e^{\R t}
    \bigg\{
    \mathbb J^2
    \eta'(t)
    \big\| 
        \partial_t (u_{\eta})
    \big\|_{H^{s+\frac{1}{2}, 0}}^2 
    +\\ 
    &
    \hspace{1cm}+
    \mathbb J^2
    \eta'(t)^3
    \big\| 
         u_{\eta} 
    \big\|_{H^{s+\frac{3}{2}, 0}}^2
    +
    \Big(
        \frac{1}{2}
        -
        \frac{\mathbb J \R}{2}
    \Big)
    \eta'(t)
    \big\| 
         u_{\eta} 
    \big\|_{H^{s+\frac{1}{2}, 0}}^2
    +
    \big(
        \mathbb J
        -
        \R 
        \mathbb J^2
    \big)
    \eta'(t)^2
    \big\| 
         u_{\eta} 
    \big\|_{H^{s+1, 0}}^2
    \bigg\}.
\end{aligned}
\end{equation}
The Sobolev norm of $H^{s+3/2, 0}$ has now appeared (first term of the third line). Furthermore, recalling that $\mathcal{R}\leq (1 / 4)\min\{1, \mathbb J^{-1}\} $ both $(1/2-\mathbb J \R/2 ) \geq 3/8$ and 
$ \mathbb J -\R \mathbb J^2\geq 3\mathbb J/4$ are positive. Needless to say, this high regularity comes with a price, namely the appearance of certain terms which depend on the second time derivative $\eta''(t)$ (and are also in our main estimate \eqref{ineq-proposition-Es}). By coupling \eqref{energy-id-lemma-u} together with \eqref{Hs+3inb1}, we eventually gather the estimate \vspace{-0.2cm}
\begin{equation}\label{energy-id-lemma-uc}
\begin{aligned}
    \frac{d}{dt}
    \Big[
         e^{\R t}
        \Big(
        \underbrace
        {
         \frac{\mathbb J^2}{2}
        \| 
        (\partial_t u)_{\eta}   
        \|_{H^{s,0}}^2 
        +
         \frac{1}{2}
        \|  
        \mathbb J
        (\partial_t u)_{\eta}   
        \!+\!
        u_\eta
        \|_{H^{s,0}}^2 
        +
        \mathbb J
        \| 
         \partial_y u_{\eta}   
        \|_{H^{s,0}}^2 \!
        }_{\text{$u$-terms in }\mathcal{E}_{s}(t)}
        +
        \eta'(t)
        \hspace{-0.2cm}
        \underbrace{
        \frac{\mathbb J}{2}
        \| 
             u_{\eta}
        \|_{H^{s+\frac{1}{2}, 0}}^2
        }_{\text{$u$-term in }\mathcal{E}_{s+\frac{1}{2}}(t)}
        \hspace{-0.3cm}
        +
        \eta'(t)^2
        \hspace{-0.1cm}
        \underbrace{
        \mathbb J^2
        \| 
             u_{\eta}
        \|_{H^{s+1, 0}}^2
         }_{\text{$u$-term in }\mathcal{E}_{s+1}(t)}
    \Big)
    \Big]+\\ 
   +
    \eta'(t)
     e^{\R t}
     \bigg\{ 
     \underbrace{
        \frac{\mathbb J^2}{2}
        \| 
            (\partial_t u)_{\eta}       
        \|_{H^{s+\frac12,0}}^2
        \!\!+\!\! 
        \frac{1}{2}
        \| 
           \mathbb J (\partial_t u)_{\eta} \!+\! u_\eta       
        \|_{H^{s+\frac12,0}}^2 
        \!\!+\!\!
        2\mathbb J
     \| \partial_y u_{\eta}     \|_{H^{s+\frac12, 0}}^2
     \!\!+\!\!
     \mathbb J^2
     \| \partial_t (u_{\eta})   \|_{H^{s+\frac12,0}}^2 
     \!\!+\!\! 
     \frac{3}{8}
     \| u_\eta \|_{H^{s+\frac12,0}}^2
     }_{\text{$u$-terms in }\mathcal{D}_{s+\frac{1}{2}}(t)}
    \bigg\}
    +
    \\
    +
     \mathbb J
     \eta'(t)^2
     e^{\R t}
     \underbrace{
     \frac{3}{4}
     \| u_\eta \|_{H^{s+1,0}}^2
     }_{\text{$u$-term in }\mathcal{D}_{s+1}(t)}
     \hspace{-0.2cm}
     +
    \mathbb J^2
    \eta'(t)^3
    e^{\R t}
    \hspace{-0.2cm}
    \underbrace{
    \| u_\eta \|_{H^{s+\frac{3}{2},0}}^2
    }_{\text{$u$-terms in }\mathcal{D}_{s+\frac{3}{2}}(t)}
    \hspace{-0.4cm}
      -
    2
    \mathbb J^2
    e^{\R t}
    \eta'(t)\eta''(t)
    \hspace{-0.2cm}
     \underbrace{
     \big\| 
         u_{\eta}
    \big\|_{H^{s+1, 0}}^2
    }_{ \text{$u$-term in }\mathcal{E}_{s+1}(t)}
    \hspace{-0.3cm}
    -
    \mathbb J
    e^{\R t}
    \eta''(t)
    \hspace{-0.2cm}
    \underbrace{
    \frac{1}{2}
     \big\| 
         u_{\eta}
    \big\|_{H^{s+\frac{1}{2}, 0}}^2
    }_{ \text{$u$-term in }\mathcal{E}_{s+\frac{1}{2}}(t)}
    +\\
    +
    \frac{ e^{\R t}}{2}
    \underbrace{
    \Big(
    \|  \pa_y   u_{\eta}      \|_{H^{s,0}}^2
    \!+\!
    \mathbb J
    \| 
        (\partial_t u)_{\eta}   
    \|_{H^{s,0}}^2
    \Big)
    }_{ \text{$u$-term in }\mathcal{D}_{s}(t)}
    \leq  
    e^{\R t}
    \Big\{
    -\psca{ (u\pa_xu)_\eta,
    2 \mathbb J(\partial_t u)_\eta \!+\! u_\eta}_{H^{s,0}}
    +\\
    -
    \langle 
    ( 
        v
        \pa_yu
    )_\eta, 
   2\mathbb J(\partial_t u)_\eta \!+\! u_\eta\rangle_{H^{s,0}}
    \!+\!
     \psca{F_{\eta},
    2\mathbb J (\partial_t u)_\eta \!+\! u_\eta}_{H^{s,0}}
     \!-\!
    \psca{\pa_x p_{\eta},
    2\mathbb J (\partial_t u)_\eta \!+\!u_\eta}_{H^{s,0}}\!
    \Big\}.
\end{aligned}
\end{equation}
We next proceed to estimate each term on the right-hand side of \eqref{energy-id-lemma-uc}. For each estimated term, we will determine a suitable lower bound of the constant $D_s\geq 1$ in the main inequality \eqref{ineq-proposition-Es} of Proposition \ref{prop:the-overall-final-estimate-in-eta}. 
This lower bound will increase at any step. The last term will therefore provide the exact form of $D_s$. 

\noindent
Throughout the forthcoming analysis, we will repeatedly use the following estimates, which recast the $H^s$-Sobolev norm of $u_\eta$ and $b_{1,\eta}$ in terms of $\mathcal{E}_s$.
\begin{equation}\label{Hs-bounded-by-Es}
\begin{aligned}
    \| u_\eta \|_{H^{s,0}} 
    &\leq 
    \|  u_\eta \!+\! \mathbb J (\partial_t u)_\eta \|_{H^{s,0}}
    \!+\!
    \mathbb J\| (\partial_t u)_\eta \|_{H^{s,0}}
    \\
    &\leq 
    2
    \Big(
        \frac{\mathbb J^2}{2}
        \| (\partial_t u)_\eta \|_{H^{s,0}}^2
        \!+\!
        \frac{1}{2}
        \|\mathbb J (\partial_t u)_\eta 
        \!+\! u_\eta \|_{H^{s,0}}^2
    \Big)^\frac{1}{2}
    \!\!
    \leq 
    2
    \sqrt{\mathcal{E}_s},
    \\
    \| b_{1,\eta} \|_{H^{s,0}} 
    &\leq 
    \Big\| 
        b_{1,\eta} +
        \frac{\kappa}{{\rm Pr}_m}
        (\partial_t b_1)_\eta 
    \Big\|_{H^{s,0}}
    \!+\!
    \frac{\kappa}{{\rm Pr}_m}
    \| (\partial_t b_1)_\eta \|_{H^{s,0}}
    \\
    &\leq 
    2
    \Big(
        \frac{\kappa^2}{2{\rm Pr}_m^2}
        \| (\partial_t b_1)_\eta \|_{H^{s,0}}^2
        \!+\!
        \frac{1}{2}
        \Big\| 
        \frac{\kappa}{{\rm Pr}_m}
        (\partial_t b_1)_\eta + b_{1,\eta} 
    \Big\|_{H^{s,0}}^2
    \Big)^\frac{1}{2}\!\!
    \leq 
    2
    \sqrt{\mathcal{E}_s}.
\end{aligned}
\end{equation}
Similarly, we can connect the $H^{s+1/2}$-norms of $u_\eta$ and $b_{1,\eta}$ in terms of $\mathcal{D}_{s+\frac{1}{2}}$:
\begin{equation}\label{Hs+1/2-bounded-by-Ds}
\begin{alignedat}{8}
    \| u_\eta \|_{H^{s+\frac{1}{2},0}} 
    \leq 
    2\sqrt{\mathcal{D}_{s+\frac{1}{2}}},
    \qquad
    \qquad
    \| b_{1,\eta} \|_{H^{s+\frac{1}{2},0}} 
    \leq 
    2\sqrt{\mathcal{D}_{s+\frac{1}{2}}} .
\end{alignedat}
\end{equation}

\noindent
The first term on the right-hand side of \eqref{energy-id-lemma-uc}, that we deal with, is the convection
\begin{equation}\label{secu:first-estimate}
\begin{aligned}
    &\big|
       e^{\R t}
       \psca{ (u\pa_xu)_\eta,2\mathbb J(\partial_t u)_\eta + u_{\eta}}_{H^{s,0}}
    \big|
    \leq
    e^{\R t}
    \| (u\pa_xu)_\eta   \|_{H^{s-\frac{1}{2},0}}
    \Big(
    \mathbb J
     \|
     (\partial_t u)_\eta         \|_{H^{s+\frac{1}{2},0}}
     \!\!\!+\!
    \| \mathbb J(\partial_t u)_\eta \!+\! u_{\eta}        \|_{H^{s+\frac{1}{2},0}}
    \Big)\\
    &\leq
    e^{\R t}
    \| (u\pa_xu)_\eta   \|_{H^{s-\frac{1}{2},0}}
    2
    \Big(
    \frac{\mathbb J^2}{2}
     \|
     (\partial_t u)_\eta         \|_{H^{s+\frac{1}{2},0}}^2
     \!+\!
     \frac{1}{2}
    \| \mathbb J (\partial_t u)_\eta \!+\! u_{\eta}        \|_{H^{s+\frac{1}{2},0}}^2
    \Big)^\frac{1}{2}
    \!\leq
    2
    e^{\R t}
    \| (u\pa_xu)_\eta   \|_{H^{s-\frac{1}{2},0}}
    \sqrt{\mathcal{D}_{s+\frac{1}{2}}(t)}.
\end{aligned}
\end{equation}
In order to cope with $(u\pa_xu)_\eta $ in $H^{s-1/2,0}$, we shall transfer the $\eta$-transformation (i.e.~the Lagrangian multiplier $e^{(\tau_0-\eta(t))(1+|D_x|)}$) to each component $u$ and $\partial_x u$. Of course, $(u\pa_xu)_\eta\neq u_\eta \pa_xu_\eta$ in general. However, we are here controlling a Sobolev norm and not the functions themselves, pointwise. The following product law therefore allows us to transfer the mentioned Lagrangian multiplier in terms of pure Sobolev estimates:
\begin{lemma}\label{lemma:technical-lemma-product-law-Hs0}
    Let  $f,\,g:\mathbb R\times (0,1)\to \mathbb R$ be two functions such that $f_\eta$, $g_\eta$ and $\partial_y f_\eta$ belong to $H^{\sigma_1,0}(\mathbb R\times (0,1))$ with $\sigma_1>1/2$ (and thus also to $H^{\sigma_2,0}(\mathbb R\times (0,1))$, for any $\sigma_2\leq \sigma_1$). 
    Furthermore, assume that $f\equiv 0$ in $y = 0$ in the sense of trace. 
    Then
    \begin{equation*}
    \begin{aligned}
        \| (f g)_\eta \|_{H^{\sigma_1, 0}} 
        \leq
            \frac{2^{\sigma_1-\frac{1}{2}}}{\sqrt{\sigma_2 - \frac{1}{2}}}
            \bigg(
            \| \partial_y f_\eta \|_{H^{\sigma_1, 0}}
            \|  g_\eta           \|_{H^{\sigma_2, 0}}
            +
            \| \partial_y f_\eta \|_{H^{\sigma_2, 0}}
            \| g_\eta           \|_{H^{\sigma_1, 0}}
            \bigg)
    \end{aligned}
    \end{equation*}
     for any regularities $\sigma_2 \in (1/2, \sigma_1]$.
\end{lemma}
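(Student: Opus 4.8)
The plan is to prove this anisotropic product law in two stages: first reduce the $\eta$-transformed product to a genuine Sobolev product estimate for functions with \emph{nonnegative} Fourier transforms, and then establish that estimate by a high--low frequency decomposition in $x$ combined with a fundamental-theorem-of-calculus argument in $y$ that exploits the boundary condition $f|_{y=0}=0$.

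For the first stage I would pass to the partial Fourier transform in the horizontal variable, writing $\hat{\cdot}$ for the transform in $x$. The product becomes a convolution, $\widehat{fg}(\xi,y)=\int_{\mathbb R}\hat f(\xi-\zeta,y)\,\hat g(\zeta,y)\,d\zeta$, and the multiplier $e^{(\tau_0-\eta)(1+|D_x|)}$ inserts the weight $e^{(\tau_0-\eta)(1+|\xi|)}$. The key elementary fact is the subadditivity $1+|\xi|\le(1+|\xi-\zeta|)+(1+|\zeta|)$, which, since $\tau_0-\eta\ge 0$, yields $e^{(\tau_0-\eta)(1+|\xi|)}\le e^{(\tau_0-\eta)(1+|\xi-\zeta|)}\,e^{(\tau_0-\eta)(1+|\zeta|)}$. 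Taking absolute values inside the convolution then gives the pointwise bound $|\widehat{(fg)_\eta}(\xi,y)|\le\big(|\widehat{f_\eta}|\ast|\widehat{g_\eta}|\big)(\xi,y)$, where $\ast$ denotes convolution in $x$. Introducing auxiliary functions $F,G$ determined by $\hat F=|\widehat{f_\eta}|$ and $\hat G=|\widehat{g_\eta}|$, one has $\|F\|_{H^{a,0}}=\|f_\eta\|_{H^{a,0}}$ and $\|\partial_y F\|_{H^{a,0}}=\|\partial_y f_\eta\|_{H^{a,0}}$ for every $a$ (the multiplier acts only in $x$, hence commutes with $\partial_y$ and preserves the trace, so $F|_{y=0}=0$), and similarly for $G$. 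Thus it suffices to prove the stated inequality for $F,G$ with nonnegative Fourier transform.

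For the second stage I would exploit the anisotropy by always placing the factor carrying the boundary condition in $L^\infty_y$. Concretely, $F(x,y)=\int_0^y\partial_y F(x,z)\,dz$ gives, for each frequency $\xi$, the bound $|\hat F(\xi,y)|\le\|\partial_y\hat F(\xi,\cdot)\|_{L^2(0,1)}$ by Cauchy--Schwarz, whence $\sup_y\|F(\cdot,y)\|_{H^{a}_x}\le\|\partial_y F\|_{H^{a,0}}$; this is exactly why the right-hand side features $\partial_y f_\eta$ rather than $f_\eta$. Next I would split the convolution according to which frequency dominates, using $1+|\xi|\le 2(1+|\xi-\zeta|)$ on $\{|\zeta|\le|\xi-\zeta|\}$ and $1+|\xi|\le 2(1+|\zeta|)$ on $\{|\zeta|>|\xi-\zeta|\}$, so that $(1+|\xi|)^{\sigma_1}$ is dominated by the weight on the higher frequency up to a power of $2$. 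On the piece where the high frequency falls on $F$, Young's inequality $\|a\ast b\|_{L^2_\xi}\le\|a\|_{L^2_\xi}\|b\|_{L^1_\zeta}$ with $a=(1+|\xi|)^{\sigma_1}\hat F(\cdot,y)$ and $b=\hat G(\cdot,y)$, followed by the weighted Cauchy--Schwarz bound $\|\hat G(\cdot,y)\|_{L^1}\le\big(\int_{\mathbb R}(1+|\zeta|)^{-2\sigma_2}\,d\zeta\big)^{1/2}\|G(\cdot,y)\|_{H^{\sigma_2}_x}$, produces exactly the factor $(\sigma_2-\tfrac12)^{-1/2}$, since $\int_{\mathbb R}(1+|\zeta|)^{-2\sigma_2}d\zeta=\tfrac{2}{2\sigma_2-1}$ converges precisely because $\sigma_2>\tfrac12$. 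Finally, integrating in $y$ through $\int_0^1\phi\,\psi\,dy\le\|\phi\|_{L^\infty_y}\|\psi\|_{L^1_y}$, with $F$ in $L^\infty_y$ and $G$ in $L^2_y$, yields the contribution $\|\partial_y F\|_{H^{\sigma_1,0}}\|G\|_{H^{\sigma_2,0}}$; the symmetric piece, with the high frequency on $G$ and $F$ again in $L^\infty_y$ at regularity $\sigma_2$, gives $\|\partial_y F\|_{H^{\sigma_2,0}}\|G\|_{H^{\sigma_1,0}}$.

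The routine but delicate part is the bookkeeping: one must verify that in each of the two frequency regimes it is always $F$ that is placed in $L^\infty_y$ (so that the boundary condition is used and $\partial_y F$ appears), while the weighted $L^1$-in-$\zeta$ estimate is always applied to the low-frequency factor at regularity $\sigma_2$. Collecting the two regimes and tracking the powers of $2$ and the factor $(\sigma_2-\tfrac12)^{-1/2}$ through Young's and Cauchy--Schwarz inequalities then produces the stated prefactor $2^{\sigma_1-\frac12}(\sigma_2-\tfrac12)^{-1/2}$. I expect the only genuine subtlety to be the need to keep the trace $F|_{y=0}=0$ intact after applying the horizontal multiplier and after passing from $f_\eta$ to the majorant $|\widehat{f_\eta}|$, which is immediate here because $e^{(\tau_0-\eta)(1+|D_x|)}$ acts only in the $x$-variable and therefore leaves the $y$-trace unchanged.
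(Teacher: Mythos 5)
Your proposal is correct and follows essentially the same route as the paper: pass to the Fourier transform in $x$, use the subadditivity $1+|\xi|\le(1+|\xi-\zeta|)+(1+|\zeta|)$ to distribute the exponential weight over the convolution, apply Young's inequality with the weighted Cauchy--Schwarz bound $\|\hat g_\eta(\cdot,y)\|_{L^1_\xi}\le(\sigma_2-\tfrac12)^{-1/2}\|g_\eta(\cdot,y)\|_{H^{\sigma_2}_x}$, and place the factor vanishing at $y=0$ in $L^\infty_y$ via the fundamental theorem of calculus so that $\partial_y f_\eta$ appears. The one structural difference is how the polynomial weight is split: the paper works with the squared norm and the algebraic inequality $(1+|\xi|)^{2\sigma_1}\le 2^{2\sigma_1-1}\big[(1+|\xi-\zeta|)^{2\sigma_1}+(1+|\zeta|)^{2\sigma_1}\big]$, which after taking the square root yields the prefactor $2^{\sigma_1-1/2}$, whereas your high--low frequency domain decomposition with $1+|\xi|\le 2\max\{1+|\xi-\zeta|,1+|\zeta|\}$ and the triangle inequality in $L^2_\xi$ gives $2^{\sigma_1}$, i.e.\ a harmless extra factor of $\sqrt2$ relative to the stated constant (this only rescales $D_s$ downstream, so nothing breaks, but your claim of recovering the exact prefactor $2^{\sigma_1-1/2}$ is not quite delivered by the splitting as described). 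One further small point: for the majorants $F,G$ with $\hat F=|\widehat{f_\eta}|$ you assert $\|\partial_y F\|_{H^{a,0}}=\|\partial_y f_\eta\|_{H^{a,0}}$; taking the modulus can only decrease the $y$-derivative pointwise ($|\partial_y|w||\le|\partial_y w|$ a.e.), so this should be an inequality $\le$ — which is the direction you need, so the argument stands.
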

\noindent 
We postpone the technical proof of this lemma \ref{lemma:technical-lemma-product-law-Hs0} to the appendix  (cf.~Lemma \ref{appx:lemma-product-eta}). Addressing our original estimate \eqref{secu:first-estimate}, we are in the position to apply Lemma \ref{lemma:technical-lemma-product-law-Hs0} with the regularities $\sigma_1=\sigma_2 = s-1/2>1/2$ and the functions  $f(\cdot) = u(t,\cdot),\,g(\cdot) = \partial_x u (t,\cdot)$. We deduce therefore that
\begin{align*}
    \| (u\pa_xu)_\eta  \|_{H^{s-\frac{1}{2},0}}
    &\leq  
    \frac{2^{s-1}}{\sqrt{s - 1}}
    2\| \partial_y u_\eta \|_{H^{s-\frac{1}{2},0}}
    \| \partial_x u_\eta \|_{H^{s-\frac{1}{2},0}}
    \leq 
    \frac{2^{s}}{\sqrt{s - 1}}
    \| \partial_y u_\eta \|_{H^{s,0}}
    \| u_\eta \|_{H^{s+\frac{1}{2},0}}
    \\
    &\leq 
    \frac{2^{s+1}}{\sqrt{s - 1}}
    \| \partial_y u_\eta \|_{H^{s,0}}
    \sqrt{\mathcal{D}_{s+\frac{1}{2}}(t)},
\end{align*}
where in the last inequality we have indeed applied \eqref{Hs+1/2-bounded-by-Ds}.
Plugging this inequality to the original estimate \eqref{secu:first-estimate}, we eventually gather that the convective term is bounded by
\begin{equation*}
    \begin{aligned}
        \big|
            e^{\R t}
            \psca{ (u\pa_xu)_\eta,u_{\eta}}_{H^{s,0}}
        \big|
        \leq
        \frac{2^{s+2}}{\sqrt{s - 1}}
        e^{\R t}
        \frac{1}{\sqrt{\mathbb J}}
        \sqrt{\mathcal{E}_{s}(t)}
       \mathcal{D}_{s+\frac{1}{2}}(t).
    \end{aligned}
\end{equation*}
We shall now remark that the above right-hand side is indeed in the first integrand of the third line of our main inequality \eqref{ineq-proposition-Es} of Proposition \ref{prop:the-overall-final-estimate-in-eta}. 
A necessary condition for the validity of this Proposition is that the related constant $D_s$ must satisfy $D_s\geq 2^{s+2}/\sqrt{s-1}$.

\noindent
Next, we deal with the second term on the right-hand side of \eqref{energy-id-lemma-u}, more precisely
\begin{align*}
      e^{\R t}
    \big|
    \big\langle 
    ( 
        v
        \pa_yu
    )_\eta,\
    2\mathbb J(\partial_t u)_\eta + u_{\eta}
    \big\rangle_{H^{s,0}}
    \big|
    &\leq 
    e^{\R t}
    \big\|
        ( 
            v\pa_yu
        )_\eta
    \big\|_{H^{s-\frac{1}{2},0}}
    \Big(
        \mathbb J\|(\partial_t u)_\eta \|_{H^{s+\frac{1}{2},0}}
        +
        \| \mathbb J(\partial_t u)_\eta \!+\! u_{\eta} \|_{H^{s+\frac{1}{2},0}} 
    \Big)\\
    &\leq 
    e^{\R t}
    \big\|
        ( 
            v\pa_yu
        )_\eta
    \big\|_{H^{s-\frac{1}{2},0}}
    2\sqrt{D_{s+\frac{1}{2}}(t)}.
\end{align*}
We apply  Lemma \ref{lemma:technical-lemma-product-law-Hs0} once more with regularities $\sigma_1=\sigma_2 = s-1/2>1/2$, but with functions  $f =v$ and $g  = \partial_y u$. Hence
\begin{align*}
    e^{\R t}
    \big|
    \langle 
    & 
    (
         v\pa_yu
    )_\eta,\,
    2\mathbb J(\partial_t u)_\eta + u_{\eta}
    \rangle_{H^{s,0}}
    \big|
    \leq 
    \frac{2^{s-1}}{\sqrt{s - 1}}
    e^{\R t}
    2
    \| 
        \partial_y v_\eta 
    \|_{H^{s-\frac{1}{2},0}} 
    \|
        \partial_y u_\eta
    \|_{H^{s-\frac{1}{2},0}}
    2\sqrt{D_{s+\frac{1}{2}}(t)}
     \\
    &\leq 
    \frac{2^{s+1}}{\sqrt{s - 1}}
    e^{\R t}
    \big\| 
        \partial_x  u_\eta 
    \big\|_{H^{s-\frac{1}{2},0}} 
    \|
        \partial_y u_\eta
    \|_{H^{s-\frac{1}{2},0}}
   \sqrt{D_{s+\frac{1}{2}}(t)} \\
    &\leq 
    \frac{2^{s+1}}{\sqrt{s - 1}}
    e^{\R t}
    \|
        \partial_y u_\eta
    \|_{H^{s,0}}
    \| u_\eta \|_{H^{s+\frac{1}{2},0}}
    \sqrt{D_{s+\frac{1}{2}}(t)}
    \\
    &\leq 
    \frac{2^{s+1}}{\sqrt{s - 1}}
    e^{\R t}
    \|
        \partial_y u_\eta
    \|_{H^{s,0}}
    \mathcal{D}_{s+\frac{1}{2}}(t)
    \leq 
    \frac{2^{s+2}}{\sqrt{s - 1}}
    e^{\R t}
    \frac{1}{\sqrt{\mathbb J}}
    \sqrt{\mathcal{E}_{s}(t)}
    \mathcal{D}_{s+\frac{1}{2}}(t),
\end{align*}
which is in the first integrand of the third line of our energy inequality \eqref{ineq-proposition-Es}. Hence $D_s$ must satisfy at least $D_s\geq 2^{s+3}/\sqrt{s-1}$.

\noindent 
Next, we aim to estimate each component of the function $F_\eta$ in \eqref{energy-id-lemma-u} (see also \eqref{Feta}). We begin with
\begin{align*}
    e^{\R t}
    \big|
    \big\langle 
    (
       b_1 
       b_2
       v
    )_\eta 
    ,\,2\mathbb J(\partial_t u)_\eta + u_{\eta}
    \big\rangle_{H^{s,0}}
    \big|
    &\leq 
    e^{\R t}
    \big\| 
       (
        b_1 
        b_2 
        v
    )_\eta 
    \big\|_{H^{s-\frac{1}{2},0}}
    \Big(
    \|
       \mathbb J (\partial_t u)_\eta + u_{\eta}
    \|_{H^{s+\frac{1}{2},0}}
    +
    \mathbb J
    \| (\partial_t u)_\eta \|_{H^{s+\frac{1}{2},0}}
    \Big)\\
    &\leq 
    e^{\R t}
    \big\| 
       (
        b_1 
        b_2 
        v
    )_\eta 
    \big\|_{H^{s-\frac{1}{2},0}}
    2
    \sqrt{\mathcal{D}_{s+\frac{1}{2}}(t)}
\end{align*}    
Thanks to Lemma \ref{lemma:technical-lemma-product-law-Hs0}, with $\sigma_1 =s-1/2$, $\sigma_2 = s-1$, $f = v$ and $g = b_1b_2 $, we deduce that
\begin{align*}
     &
    e^{\R t}
    \big|
    \big\langle 
    (
       b_1 
       b_2
       v
    )_\eta 
    ,\,
    2\mathbb J(\partial_t u)_\eta + u_{\eta}
    \big\rangle_{H^{s,0}}
    \big|\\
    &\leq 
    e^{\R t}
    \frac{2^{s-1}}{\sqrt{s - 3/2}}
    \Big(
    \| 
        \partial_y v_\eta 
    \|_{H^{s-\frac{1}{2},0}}
    \|
        \big(
         b_{1} 
         b_2
         \big)_\eta
    \|_{H^{s-1,0}}
    +
    \| 
        \partial_y v_\eta 
    \|_{H^{s-1,0}}
    \|
        (
         b_{1} 
         b_2
         )_\eta
    \|_{H^{s-\frac{1}{2},0}}
    \Big)
    2
    \sqrt{\mathcal{D}_{s+\frac{1}{2}}(t)}\\
    &\leq 
    e^{\R t}
    \frac{2^{s}}{\sqrt{s - 3/2}}
    \Big(
    \| 
          u_\eta 
    \|_{H^{s+\frac{1}{2},0}}
    \|
        (
         b_{1} 
         b_2
        )_\eta
    \|_{H^{s-1,0}}
    +
    \| 
          u_\eta 
    \|_{H^{s,0}}
    \|
        (
         b_{1} 
         b_2
        )_\eta
    \|_{H^{s-\frac{1}{2},0}}
    \Big)
    \sqrt{\mathcal{D}_{s+\frac{1}{2}}(t)}
\end{align*}
Next, we apply Lemma \ref{lemma:technical-lemma-product-law-Hs0} twice, first with regularities $\sigma_1 = \sigma_2 = s-1$ and functions $f = b_2$, $g= b_1$ (to deal with the term $\| (b_1b_2)_\eta \|_{H^{s-1}}$), secondly with  regularities $\sigma_1 = \sigma_2 = s-1/2$ and same functions $f = b_2$, $g= b_1$ (to deal with the term $\| (b_1b_2)_\eta \|_{H^{s-1/2}}$).
By recalling the divergence-free condition $\partial_y b_{2,\eta} = -\partial_x b_{1,\eta}$, we gather
\begin{align*}
    &
     e^{\R t}
    \big|
    \big\langle 
    (
       b_1 
       b_2
       v
    )_\eta 
    ,\,
    2\mathbb J(\partial_t u)_\eta + u_{\eta}
    \big\rangle_{H^{s,0}}
    \big|
    \leq 
    e^{\R t}
    \frac{2^{s}}{\sqrt{s - 3/2}}
    \bigg\{
    \| u_\eta \|_{H^{s+\frac{1}{2},0}}
    \frac{2^{s-\frac{3}{2}}}{\sqrt{s-3/2}}
    2
    \| b_{1,\eta} \|_{H^{s-1,0}} 
    \|
        \partial_x 
          b_{1,\eta} 
    \|_{H^{s-1,0}} 
    +\\ 
    &\hspace{2cm}+
    \frac{2^{s-1}}{\sqrt{s-1}}
    2
    \| u_\eta   \|_{H^{s,0}}
    \| b_{1,\eta} \|_{H^{s-\frac{1}{2},0}} 
    \|
        \partial_x 
          b_{1,\eta} 
    \|_{H^{s-\frac{1}{2},0}} 
    \bigg\}
    \sqrt{\mathcal{D}_{s+\frac{1}{2}}(t)}
    \\
    &\leq 
    \frac{2^{2s}}{s - 3/2}
     e^{\R t}
    \bigg\{
    \| b_{1,\eta} \|_{H^{s,0}}^2
    \| u_\eta \|_{H^{s+\frac{1}{2},0}}
    +
   \| u_\eta   \|_{H^{s,0}}
    \| b_{1,\eta} \|_{H^{s,0}}
    \|
        b_{1,\eta} 
    \|_{H^{s+\frac{1}{2},0}} 
    \bigg\}
    \sqrt{\mathcal{D}_{s+\frac{1}{2}}(t)}.
\end{align*}
Thus, by recalling \eqref{Hs-bounded-by-Es} and \eqref{Hs+1/2-bounded-by-Ds}, we deduce that
\begin{align*} 
    & e^{\R t}
    \big|
    \big\langle 
    (
       b_1 
       b_2
       v
    )_\eta 
    ,\,
    2\mathbb J(\partial_t u)_\eta + u_{\eta}
    \big\rangle_{H^{s,0}}
    \big|
    \leq \\
    &\leq 
     \frac{2^{2s}}{s - 3/2}
     e^{\R t}
    \bigg\{
    2^2\mathcal{E}_s(t)
    2\sqrt{D_{s+\frac{1}{2}}(t)}
    +
    2^2
    \mathcal{E}_s(t)
    2
    \sqrt{D_{s+\frac{1}{2}}(t)}
    \bigg\}
    \sqrt{D_{s+\frac{1}{2}}(t)}
    \\
    &
    \leq 
    \frac{2^{2s+4}}{s - 3/2}
     e^{\R t}
    \mathcal{E}_{s}(t)
    \mathcal{D}_{s+\frac{1}{2}}(t),
\end{align*}    
which is one of integral in the third line of \eqref{lemma:technical-lemma-product-law-Hs0}. 
Hence $D_s$ must satisfy $D_s\geq 2^{s+3}/\sqrt{s-1} + 2^{2s+4}/(s - 3/2)$.

\noindent 
The remaining components of $F_\eta$ are dealt with an analogous procedure. Thanks to Lemma \ref{lemma:technical-lemma-product-law-Hs0}, we have indeed
\begin{align*}
      e^{\R t}
    \big|
    \big\langle 
    (
       b_2^2
       u
    )_\eta 
    ,\,2\mathbb J(\partial_t u)_\eta + u_{\eta}
    \big\rangle_{H^{s,0}}
    \big|
    &\leq 
     e^{\R t}
    \big\| 
      (
       b_2^2
       u
      )_\eta  
    \big\|_{H^{s-\frac{1}{2},0}}
    \big(
    \|
        \mathbb J(\partial_t u)_\eta + u_{\eta}
    \|_{H^{s+\frac{1}{2},0}}
    +
    \mathbb J\| (\partial_tu)_\eta \|_{H^{s+\frac{1}{2},0}}
    \big)\\
    &\leq 
     e^{\R t}
    \big\| 
       \big(
       b_2^2
       u
    \big)_\eta 
    \big\|_{H^{s-\frac{1}{2},0}}
    2
    \sqrt{\mathcal{D}_{s+\frac{1}{2}}(t)}.
\end{align*}    
We invoke once more Lemma \ref{lemma:technical-lemma-product-law-Hs0}, with $\sigma_1 =s-1/2$, $\sigma_2 = s-1$, $f = b_2$ and $g = u b_2 $. Thanks to the divergence-free condition $\partial_y b_{2,\eta} = -\partial_x b_{1,\eta}$, we deduce that
\begin{align*}
     &  e^{\R t}
    \big|
    \big\langle 
    (
       b_2^2
       u
    )_\eta 
    ,\,2\mathbb J(\partial_t u)_\eta + u_{\eta}
    \big\rangle_{H^{s,0}}
    \big|\\
    &\leq 
     e^{\R t}
    \frac{2^{s-1}}{\sqrt{s - 3/2}}
    \Big(
    \| 
        \partial_y b_{2, \eta}
    \|_{H^{s-\frac{1}{2},0}}
    \|
        \big(
         u 
         b_2
         \big)_\eta
    \|_{H^{s-1,0}}
    +
   \| 
        \partial_yb_{2, \eta}
   \|_{H^{s-1,0}}
    \|
        \big(
         u 
         b_2
         \big)_\eta
    \|_{H^{s-\frac{1}{2},0}}
    \Big)
    2
    \sqrt{\mathcal{D}_{s+\frac{1}{2}}(t)}\\
    &\leq 
    \frac{2^s}{\sqrt{s - 3/2}}
     e^{\R t}
    \Big(
    \| 
           b_{1,\eta} 
    \|_{H^{s+\frac{1}{2},0}}
    \|
        (
         u
         b_2
         )_\eta
    \|_{H^{s-1,0}}
    +
    \| 
          b_{1,\eta} 
    \|_{H^{s,0}}
    \|
        (
         u 
         b_2
         )_\eta
    \|_{H^{s-\frac{1}{2},0}}
    \Big)
    \sqrt{\mathcal{D}_{s+\frac{1}{2}}(t)}.
\end{align*}
We are now in the position to apply Lemma \ref{lemma:technical-lemma-product-law-Hs0} to cope with 
$ \|(u b_2)_\eta\|_{H^{s-1,0}}$ and $ \|(u b_2)_\eta\|_{H^{s-1/2,0}}$. We first consider regularities $\sigma_1 = \sigma_2 = s-1$ and functions $f = b_2$, $g= u$ and secondly regularities $\sigma_1 = \sigma_2 = s-1/2$, with functions $f = b_2$, $g= u$:
\begin{align*}
    & e^{\R t}
    \big|
    \big\langle 
    (
       b_2^2
       u
    )_\eta 
    ,\,2\mathbb J(\partial_t u)_\eta + u_{\eta}
    \big\rangle_{H^{s,0}}
    \big|
    \leq 
    \frac{2^s }{\sqrt{s - 3/2}}
     e^{\R t}
    \bigg\{
    \| b_{1,\eta} \|_{H^{s+\frac{1}{2},0}}
    \frac{2^{s-\frac{3}{2}}}{\sqrt{s-3/2}}
    2
    \|
        \partial_x 
          b_{1,\eta} 
    \|_{H^{s-1,0}} 
    \| u_\eta \|_{H^{s-1,0}} 
    +\\ 
    &\hspace{2cm}+
    \frac{2^{s-1}}{\sqrt{s-1}}
    2
    \|  b_{1,\eta}   \|_{H^{s,0}}
    \|
        \partial_x 
          b_{1,\eta} 
    \|_{H^{s-\frac{1}{2},0}} 
    \| u_\eta \|_{H^{s-\frac{1}{2},0}} 
    \bigg\}
    \sqrt{\mathcal{D}_{s+\frac{1}{2}}(t)}
    \\
    &\leq 
    \frac{2^{2s}}{s - 3/2}
     e^{\R t}
    \bigg\{
    \| b_{1,\eta} \|_{H^{s+\frac{1}{2},0}}
    \|  b_{1,\eta}   \|_{H^{s,0}}
    \| u_\eta \|_{H^{s,0}} 
    + 
    \| b_{1,\eta} \|_{H^{s,0}}
    \|  b_{1,\eta}   \|_{H^{s+\frac{1}{2},0}}
    \| u_\eta \|_{H^{s,0}}
    \bigg\}
    \sqrt{\mathcal{D}_{s+\frac{1}{2}}(t)}
    \\
    &\leq 
    \frac{2^{2s+1}}{s - 3/2}
     e^{\R t}
    \|  b_{1,\eta}   \|_{H^{s+\frac{1}{2},0}}
    \| b_{1,\eta} \|_{H^{s,0}}
    \| u_\eta \|_{H^{s,0}}
    \sqrt{\mathcal{D}_{s+\frac{1}{2}}(t)}.
\end{align*}
Thus, recalling \eqref{Hs-bounded-by-Es} and \eqref{Hs+1/2-bounded-by-Ds}, we obtain
\begin{align*}
     e^{\R t}
    \big|
    \big\langle 
    (
       b_2^2
       u
    )_\eta 
    ,\,2\mathbb J(\partial_t u)_\eta + u_{\eta}
    \big\rangle_{H^{s,0}}
    \big|
    &\leq 
    \frac{2^{2s+1}}{s - 3/2}
     e^{\R t}
    \bigg\{
    2
    \sqrt{\mathcal{D}_{s+\frac{1}{2}}(t)}
    2^2
    \mathcal{E}_{s}(t)
    \bigg\}
    \sqrt{\mathcal{D}_{s+\frac{1}{2}}(t)}
    = 
    \frac{2^{2s+4}}{s - 3/2}
     e^{\R t}
    \mathcal{E}_{s}(t)
    \mathcal{D}_{s+\frac{1}{2}}(t).
\end{align*}    
We need therefore to impose $D_s \geq 2^{s+3}/\sqrt{s-1} + 2^{2s+5}/(s - 3/2)$. 

\noindent 
The last term of $F_\eta$ is finally estimated as follows:
\begin{align*}
    \big|
     e^{\R t}
    \big\langle 
        \big(
        b_2
        \big( 
            \smallint_0^y
            \partial_t b_1 
        \big)
         \big)_\eta ,
        2\mathbb J(\partial_t u)_\eta + u_{\eta}
    \big\rangle_{H^{s,0}}
    \big|
    &\leq 
     e^{\R t}
    \big\|
        \big(
        b_2
        \big( 
            \smallint_0^y
            \partial_t b_1 
        \big)
         \big)_\eta 
    \big\|_{H^{s-\frac{1}{2},0}}
    \|
       2\mathbb J(\partial_t u)_\eta + u_{\eta}
    \|_{H^{s+\frac{1}{2},0}}\\
    &\leq 
    \frac{2^{s}}{\sqrt{s - 1}}
     e^{\R t}
    \| \partial_x b_{1,\eta} \|_{H^{s-\frac{1}{2}, 0}}
    \big\|    
    \smallint_0^y
        (\partial_t b_{1})_\eta
    \big\|_{H^{s-\frac{1}{2},0}}
    2\sqrt{\mathcal{D}_{s+\frac{1}{2}}(t)}\\
    &\leq 
    \frac{2^{s+1}}{\sqrt{s - 1}}
     e^{\R t}
    2\sqrt{\mathcal{D}_{s+\frac{1}{2}}(t)}
    \big\|    
    \smallint_0^y
        (\partial_t b_{1})_\eta
    \big\|_{H^{s-\frac{1}{2},0}}
    \sqrt{\mathcal{D}_{s+\frac{1}{2}}(t)}.
\end{align*}
Since $y\in (0,1)$, one has that $ \| \smallint_0^y (\partial_t b_{1})_\eta
\big\|_{H^{s-\frac{1}{2},0}}\leq \| (\partial_t b_{1})_\eta
\big\|_{H^{s-\frac{1}{2},0}}\leq 
 \| (\partial_t b_{1})_\eta
\big\|_{H^{s,0}}\leq (2/\sqrt{\mathbb J})\sqrt{\mathcal{E}_s(t)}$, hence
\begin{align*}
    \big|
     e^{\R t}\big\langle 
        \big(
        b_2
        \big( 
            \smallint_0^y
            \partial_t b_1 
        \big)
         \big)_\eta ,
        2\mathbb J(\partial_t u)_\eta + u_{\eta}
    \big\rangle_{H^{s,0}}
    \big|
    &\leq 
    \frac{2^{s+2}}{\sqrt{s - 1}}
     e^{\R t}
    \frac{1}{\sqrt{\mathbb J}}
    \sqrt{\mathcal{E}_{s}(t)}
    \mathcal{D}_{s+\frac{1}{2}}(t),
\end{align*}
which is indeed in the second integral of \eqref{lemma:technical-lemma-product-law-Hs0}. We shall impose 
$D_s \geq 3\cdot 2^{s+2}/\sqrt{s-1} + 2^{2s + 5}/(s - 3/2)$.

\subsection{Estimates of the pressure}$\,$

\noindent 
To complete the estimates of the momentum equation, we shall finally address the term in \eqref{energy-id-lemma-uc} related to the pressure. 
First, we remark that
\begin{align*}
    -
    \psca{\pa_x p_{\eta},
    2\mathbb{J} (\partial_t u)_\eta \!+\!u_\eta
    }_{H^{s,0}}
    &=
    \psca{\pa_x p_{\eta},
    2\mathbb{J} (\partial_t\partial_x u)_\eta \!+\!\partial_x u_\eta
    }_{H^{s,0}}=
    -
    \psca{p_{\eta},2\mathbb{J} (\partial_t \partial_y v)_\eta \!+\!\partial_y v_\eta}_{H^{s,0}} 
    \\
    &=
     e^{\R t}
    \psca{\pa_y p_{\eta}, 
     2\mathbb{J} (\partial_t v)_\eta \!+\!v_\eta}_{H^{s,0}}.
\end{align*}  
Furthermore, making use of the second equation in \eqref{eq:MHD-Prandtl-without-e}, we can decompose this term as
\begin{equation}\label{terms-related-to-the-pressure}
\begin{aligned}
    e^{\R t}
    \psca{\pa_y p_{\eta}, 
     2\mathbb{J} (\partial_t v)_\eta \!+\!v_\eta}_{H^{s,0}} 
    \!=\! 
     e^{\R t}
    \Big\{
    \big\langle\big(
            b_1 
            b_2 
            u 
            \big)_\eta ,  
            2 \mathbb{J}(\partial_t v)_\eta \!+\!v_\eta
    \big\rangle_{H^{s, 0}}
     +
    \big\langle 
    (b_1^2 v)_\eta ,   
    2 \mathbb{J}(\partial_t v)_\eta \!+\!v_\eta
    \big\rangle_{H^{s, 0}}
    +\\
    -
    \big\langle 
         \big(
        b_1
        \big( 
           \smallint_0^y
            \partial_t b_1 
        \big)
        \big)_\eta
        ,   
        2 \mathbb{J}(\partial_t v)_\eta \!+\!v_\eta
    \big\rangle_{H^{s, 0}}
    \Big\}.
\end{aligned}
\end{equation}
We begin with by estimating the inner product $\langle ( b_1 b_2 u )_\eta ,  2 \mathbb{J}(\partial_t v)_\eta \!+\!v_\eta
\rangle_{H^{s, 0}}$, where we can first localise a dissipation of the form $\mathcal{D}_{s+1/2}(t)^{1/2}$:
\begin{equation}\label{first-estimate-first-trilinear-term}
\begin{aligned}
    \big| 
    e^{\R t}
    \big\langle
            (b_1 b_2 u )_\eta ,   
    &         2\mathbb{J} (\partial_t v)_\eta \!+\!v_\eta
    \big\rangle_{H^{s, 0}}
    \big|
    \leq 
     e^{\R t}
    \big\| 
            (b_1 b_2 u )_\eta
    \big\|_{H^{s+\frac{1}{2}, 0}}
    \|  2\mathbb{J} (\partial_t v)_\eta \!+\!v_\eta \|_{H^{s-\frac{1}{2}, 0}}\\
    &\leq 
    e^{\R t}
    \big\| 
            (b_1 b_2 u )_\eta
    \big\|_{H^{s+\frac{1}{2}, 0}}
    \Big(
    \mathbb{J}
    \big\| 
        \smallint_0^y \partial_x (\partial_t u)_\eta 
    \big\|_{H^{s-\frac{1}{2}, 0}}
    +
    \big\| 
        \mathbb{J} \smallint_0^y \partial_x (\partial_t u)_\eta + 
        \smallint_0^y
    \partial_x u_\eta \big\|_{H^{s-\frac{1}{2}, 0}}
    \Big)
    \\
    &\leq 
    e^{\R t}
    \big\| 
            (b_1 b_2 u )_\eta
    \big\|_{H^{s+\frac{1}{2}, 0}}
    \Big(
    \mathbb{J}
    \big\| 
         \partial_x (\partial_t u)_\eta 
    \big\|_{H^{s-\frac{1}{2}, 0}}
    +
    \big\| 
        \mathbb{J}\partial_x (\partial_t u)_\eta + 
        \partial_x u_\eta \big\|_{H^{s-\frac{1}{2}, 0}}
    \Big)
    \\
    &\leq 
    e^{\R t}
    \big\| 
            (b_1 b_2 u )_\eta
    \big\|_{H^{s+\frac{1}{2}, 0}}
    \Big(
    \mathbb{J}
    \big\| 
           (\partial_t u)_\eta 
    \big\|_{H^{s+\frac{1}{2}, 0}}
    +
    \big\| 
        \mathbb{J} (\partial_t u)_\eta + 
         u_\eta \big\|_{H^{s+\frac{1}{2}, 0}}
    \Big)
    \\
    &\leq 
    e^{\R t}
    \big\| 
            (b_1 b_2 u )_\eta
    \big\|_{H^{s+\frac{1}{2}, 0}}
    2
    \sqrt{\mathcal{D}_{s+\frac{1}{2}}(t)}
\end{aligned}
\end{equation}
We shall now address the trilinear term $\| (b_1 b_2 u )_\eta \|_{H^{s+1/2, 0}}$ and we apply Lemma \ref{lemma:technical-lemma-product-law-Hs0} with $\sigma_1 =s+1/2  $, $\sigma_2 = s-3/2$, $f = b_{2}$ and $g = b_1 u$:
\begin{equation}\label{est1-where-we-need-s>2}
\begin{aligned}
    \big\| 
            (b_1 b_2 u )_\eta
    \big\|_{H^{s+\frac{1}{2}, 0}}
    &\leq 
    \frac{2^{s}}{\sqrt{s-2}}
    \bigg\{
    \| 
        \partial_y  b_{2, \eta}
    \|_{H^{s+\frac{1}{2}, 0}}
    \| (u b_1 )_\eta   \|_{H^{s-\frac{3}{2}, 0}}
    +
    \| 
        \partial_y  b_{2, \eta}
    \|_{H^{s-\frac{3}{2}, 0}}
    \| (u b_1 )_\eta   \|_{H^{s+\frac{1}{2}, 0}}
    \Big\}\\
    &\leq 
    \frac{2^{s}}{\sqrt{s-2}}
    \bigg\{
    \| 
        \partial_x  b_{1, \eta}
    \|_{H^{s+\frac{1}{2}, 0}}
    \| (u b_1 )_\eta   \|_{H^{s-\frac{3}{2}, 0}}
    +
    \| 
        \partial_x  b_{1, \eta}
    \|_{H^{s-\frac{3}{2}, 0}}
    \| (u b_1 )_\eta   \|_{H^{s+\frac{1}{2}, 0}}
    \Big\}
    \\
    &\leq 
    \frac{2^{s}}{\sqrt{s-2}}
    \bigg\{
    \| 
        b_{1, \eta}
    \|_{H^{s+\frac{3}{2}, 0}}
    \| (u b_1 )_\eta   \|_{H^{s-\frac{3}{2}, 0}}
    +
    \| 
        b_{1, \eta}
    \|_{H^{s-\frac{1}{2}, 0}}
    \| (u b_1 )_\eta   \|_{H^{s+\frac{1}{2}, 0}}
    \Big\}\\
    &\leq 
    \frac{2^{s}}{\sqrt{s-2}}
    \bigg\{
    2
    \sqrt{\mathcal{D}_{s+\frac{3}{2}}(t)}
    \| (u b_1 )_\eta   \|_{H^{s-\frac{3}{2}, 0}}
    +
    2
    \sqrt{\mathcal{E}_s(t)}
    \| (u b_1 )_\eta   \|_{H^{s+\frac{1}{2}, 0}}
    \bigg\}.
\end{aligned}
\end{equation}
Next, we apply twice Lemma \ref{lemma:technical-lemma-product-law-Hs0}, in order to deal with $\| (ub_1 )_\eta \|_{H^{s-3/2, 0}}$ and $\| ( u b_1)_\eta \|_{H^{s+1/2, 0}}$. For both cases we consider functions $f = u$ and $g = b_1$, however the regularities are considered $\sigma_1 =s-3/2 = \sigma_2 $ and 
$\sigma_1 =s+1/2$, $\sigma_2=s$, respectively. We gather 
\begin{align*}
    \big\| 
            (b_1 b_2 u )_\eta
    \big\|_{H^{s+\frac{1}{2}, 0}}
    &\leq 
    \frac{2^s}{\sqrt{s-2}}
    \bigg\{
    2\sqrt{D_{s+\frac{3}{2}}(t)} 
    \frac{2^{s-2}}{\sqrt{s-2}}
    \| \partial_y u_\eta       \|_{H^{s-\frac{3}{2}, 0}}
    \| b_{1,\eta}              \|_{H^{s-\frac{3}{2}, 0}}
    +\\
    &
    +
    2\sqrt{\mathcal{E}_s(t)}
    \frac{2^s}{\sqrt{s-1/2}}
    \bigg(
    \| \partial_y u_\eta   \|_{H^{s+\frac{1}{2}, 0}}
    \| b_{1,\eta}          \|_{H^{s, 0}}
    +
    \| \partial_y u_\eta   \|_{H^{s, 0}}
    \| b_{1,\eta}          \|_{H^{s+\frac{1}{2}, 0}}
    \bigg)
    \bigg\}
    \\
    &\leq 
    \frac{2^{s}}{\sqrt{s-2}}
    \bigg\{
    \frac{2^{s+1}}{\sqrt{s-2}}
    \sqrt{\mathcal{D}_{s+\frac{3}{2}}(t)}
    \frac{1}{\sqrt{\mathbb J}}
    \sqrt{\mathcal{E}_s(t)}
    2
    \sqrt{\mathcal{E}_s(t)}
    + \\ 
    &+
     \frac{2^{s+1}}{\sqrt{s-1/2}}
    \sqrt{\mathcal{E}_{s}(t)}
    \bigg(
    \frac{1}{\sqrt{\mathbb J}}
    \sqrt{\mathcal{D}_{s+\frac{1}{2}}(t)}
    2
    \sqrt{\mathcal{E}_{s}(t)}
    + 
    \frac{1}{\sqrt{\mathbb J}}
    \sqrt{\mathcal{E}_{s}(t)}
    2
    \sqrt{\mathcal{D}_{s+\frac{1}{2}}(t)}
    \bigg)
    \bigg\}\\
    & \leq 
    \frac{2^{2s+3}}{s-2}
    \frac{1}{\sqrt{\mathbb J}}
    \bigg\{
        \sqrt{\mathcal{D}_{s+\frac{3}{2}}(t)}
        +
        \sqrt{\mathcal{D}_{s+\frac{1}{2}}(t)}
    \bigg\}
    \mathcal{E}_s(t).
\end{align*}
Plugging the last inequality into \eqref{first-estimate-first-trilinear-term}, we eventually obtain
\begin{equation}\label{last-estimate-first-trilinear-term}
     \Big| 
           e^{\R t}
    \big\langle
            (b_1 b_2 u )_\eta ,   
             2
             \mathbb J
             (\partial_t v)_\eta \!+\!v_\eta
    \big\rangle_{H^{s, 0}}
    \Big|
    \leq 
    \frac{2^{2s+4}}{s-2}
    \frac{1}{\sqrt{\mathbb J}}
    \sqrt{\mathcal{D}_{s+\frac{3}{2}}(t)\mathcal{D}_{s+\frac{1}{2}}(t)}
    \mathcal{E}_s(t)
    +
    \frac{2^{2s+4}}{s-2}
    \frac{1}{\sqrt{\mathbb J}}
    \mathcal{E}_s(t)
    \mathcal{D}_{s+\frac{1}{2}}(t).
\end{equation}
We shall thus impose 
$D_s \geq  3\cdot 2^{s+2}/\sqrt{s-1} + 2^{2s + 5}/(s - 3/2) + 2^{2s+4}/(s-2)$.

\noindent 
Coming back to \eqref{terms-related-to-the-pressure}, we infer that a similar approach as the one used to show \eqref{last-estimate-first-trilinear-term} implies that
\begin{equation*}
    \big| 
           e^{\R t}
           \big\langle
           (b_1^2 v )_\eta ,   
           2 (\partial_t v)_\eta \!+\!v_\eta
    \big\rangle_{H^{s, 0}}
    \big|
   \leq 
    \frac{2^{2s+4}}{s-2}
    \frac{1}{\sqrt{\mathbb J}}
   \sqrt{\mathcal{D}_{s+\frac{3}{2}}(t)\mathcal{D}_{s+\frac{1}{2}}(t)}
    \mathcal{E}_s(t)
    +
    \frac{2^{2s+4}}{s-2}
    \frac{1}{\sqrt{\mathbb J}}
    \mathcal{E}_s(t)
    \mathcal{D}_{s+\frac{1}{2}}(t),
\end{equation*}
thus we must impose $D_s \geq  3\cdot 2^{s+2}/\sqrt{s-1} + 2^{2s + 5}/(s - 3/2) + 2^{2s+5}/(s-2)$.

\noindent 
To conclude the estimates of the momentum equation, we consider
$\langle (b_1 ( \smallint_0^y \partial_t b_1 ))_\eta,2 \mathbb{J}(\partial_t v)_\eta \!+\!v_\eta \rangle_{H^{s, 0}}$, the last term of \eqref{terms-related-to-the-pressure}.
\begin{align*}
     e^{\R t}
    \big\langle 
        & \big(
        b_1
        \big( 
           \smallint_0^y
            \partial_t b_1 
        \big)
        \big)_\eta
        ,   
        2 \mathbb{J}(\partial_t v)_\eta \!+\!v_\eta
    \big\rangle_{H^{s, 0}}
    \leq 
    e^{\R t}
    \big\| 
        \big(
        b_1
        \big( 
           \smallint_0^y
            \partial_t b_1 
        \big)
        \big)_\eta
    \big\|_{H^{s+\frac{1}{2}, 0}}
    2\sqrt{D_{s+\frac{1}{2}(t)}}\\
    &\leq 
    e^{\R t}
    \frac{2^{s}}{\sqrt{s-1/2}}
    \Big(
    \|     b_{1,\eta} \|_{H^{s+\frac{1}{2},0}}
    \|   \partial_t  b_{1,\eta}\|_{H^{s,0}}
    +
    \|     b_{1,\eta} \|_{H^{s,0}}
    \|   \partial_t  b_{1,\eta}\|_{H^{s+\frac{1}{2},0}}
    \Big)
    2
    \sqrt{D_{s+\frac{1}{2}(t)}}\\
    &\leq 
    e^{\R t}
    \frac{2^{s+1}}{\sqrt{s-1/2}}
    \Big(
    2\sqrt{\mathcal{D}_{s+\frac{1}{2}}(t)}
    \frac{2{\rm Pr}_m}{\kappa}
     \sqrt{\mathcal{E}_{s}(t)}
    +
    2
    \sqrt{\mathcal{E}_{s}(t)}
    \frac{2{\rm Pr}_m}{\kappa}
    \sqrt{\mathcal{D}_{s+\frac{1}{2}}(t)}
    \Big)
    \sqrt{D_{s+\frac{1}{2}(t)}}\\
    &\leq 
    e^{\R t}
    \frac{2^{s+4}}{\sqrt{s-1/2}}
    \frac{{\rm Pr}_m}{\kappa}
    \sqrt{\mathcal{E}_{s}(t)}
    D_{s+\frac{1}{2}}(t).
\end{align*}
We shall impose $D_s \geq  3\cdot 2^{s+2}/\sqrt{s-1} + 2^{2s + 5}/(s - 3/2) + 2^{2s+5}/(s-2)+ 2^{2s+4}/\sqrt{s-1/2}$. This concludes the estimates related to the momentum equation.

\subsection{Estimates related to the equation of $b_1$}$\,$

\noindent 
In this section we cope with the equation of $b_1$ in the main system \eqref{eq:MHD-Prandtl-without-e}, more precisely we deal with
\begin{equation}\label{eq:b1-sec:final-estimates}
\begin{aligned}
    \frac{\kappa}{{\rm Pr}_m}
    \partial_{tt} b_1 
    +
    \partial_t b_1 
    + u \partial_x b_1 
    &+
    v
    \partial_y b_1 
    - 
    \frac{1}{{\rm Pr}_m}
    \partial_{yy}^2 b_1 
     = 
    b_1\partial_x u + 
    b_2\partial_y u.
\end{aligned}
\end{equation}
By coupling the definition of $b_{1,\eta}  = e^{(\tau_0-\eta )(1+|D_x|)}b_1$ and equation \eqref{eq:b1-sec:final-estimates}, we remark that the function $(t,x,y)\in (0,T)\times \mathbb{R}\to  e^{\R t/2} b_{1,\eta}(t,x,y)$ is solution of the following equation
\begin{equation}\label{eq:b1eta}
\begin{aligned}
     \frac{\kappa}{{\rm Pr}_m}
    \partial_t
    \big( 
        e^{\frac{\R}{2} t}
        &(\partial_t b_1 )_\eta
    \big)
    +
    e^{\frac{\R}{2} t}
    (\partial_t  b_1)_{\eta} 
    +
    \eta'(t) (1+|D_x|)\big(
    e^{\frac{\R}{2} t}
    (\partial_t  b_1)_{\eta}
    \big)
    +
     e^{\frac{\R}{2} t}
     (u\partial_xb_1)_{\eta}
     +\\
     &+
     e^{\frac{\R}{2} t}
     (v\partial_yb_1)_{\eta}
     -
    e^{\frac{\R}{2} t}
    \partial_y^2
    \big(
       e^{\frac{\R}{2} t}
        b_{1,\eta}
    \big)
    =
     \frac{\kappa}{{\rm Pr}_m}
    \frac{\R}{2}
    e^{\frac{\R}{2} t}
    (\partial_t b_1)_{\eta}
    +
    e^{\frac{\R}{2} t}
    (b_1\partial_xu)_{\eta}+
   e^{\frac{\R}{2} t}
    (b_2\partial_yv)_{\eta}.
\end{aligned}
\end{equation}
With a similar approach as the one used to prove inequality \eqref{energy-id-lemma-uc}, we infer that the following estimate holds true:
\begin{equation}\label{energy-id-lemma-b1c}
\begin{aligned}
    \frac{d}{dt}
    \bigg[
         e^{\R t}
        \bigg(
        \underbrace
        {
         \frac{\kappa^2}{2{\rm Pr}_m^2}
        \| 
        (\partial_t b_1)_{\eta}   
        \|_{H^{s,0}}^2 
        +
         \frac{1}{2}
        \Big\| 
        \frac{\kappa}{{\rm Pr}_m}
        (\partial_t b_1)_{\eta}   
        \!+\!
        b_{1,\eta}
        \Big\|_{H^{s,0}}^2 
        \!\!\!
        +\!
        \frac{\kappa}{{\rm Pr}_m^2}
        \| 
         \partial_y b_{1,\eta}   
        \|_{H^{s,0}}^2 \!
        }_{\text{$b_1$-terms in }\mathcal{E}_{s}(t)}
        +
        \frac{\kappa}{{\rm Pr}_m}
        \eta'(t)
        \hspace{-0.2cm}
        \underbrace{
        \frac{1}{2}
        \| 
             b_{1,\eta}
        \|_{H^{s+\frac{1}{2}, 0}}^2
        }_{\text{$b_1$-term in }\mathcal{E}_{s+\frac{1}{2}}(t)}
        \!\!\!
        +\\ 
        +
        \eta'(t)^2
        \frac{\kappa^2}{{\rm Pr}_m^2}
        \hspace{-0.3cm}
         \underbrace{
        \| 
             b_{1,\eta}
        \|_{H^{s+1, 0}}^2
         }_{\text{$b_1$-term in }\mathcal{E}_{s+1}(t)}
    \Big)
    \bigg] 
   +
    \eta'(t)
      e^{\R t}
     \bigg\{ 
     \underbrace{
        \frac{\kappa^2}{2{\rm Pr}_m^2}
        \| 
            (\partial_t b_1)_{\eta}       
        \|_{H^{s+\frac12,0}}^2
        \!+\! 
        \frac{1}{2}
        \| 
            \frac{\kappa}{{\rm Pr}_m}
            (\partial_t b_1)_{\eta} \!+\! u_\eta       
        \|_{H^{s+\frac12,0}}^2 
     }_{\text{$b_1$-terms in }\mathcal{D}_{s+\frac{1}{2}}(t)}
     +\\+
    \underbrace{    
        \frac{2\kappa}{2{\rm Pr}_m^2}
     \| \partial_y b_{1,\eta}     \|_{H^{s+\frac12, 0}}^2
     \!+\!
     \frac{\kappa^2}{{\rm Pr}_m^2}
     \| \partial_t (b_{1,\eta})   \|_{H^{s+\frac12,0}}^2 
     \!+\! 
     \frac{3}{8}
     \| b_{1,\eta} \|_{H^{s+\frac12,0}}^2
     }_{\text{$b_1$-terms in }\mathcal{D}_{s+\frac{1}{2}}(t)}
    \bigg\}
    +
     \frac{\kappa}{{\rm Pr}_m}
     \eta'(t)^2
      e^{\R t}
     \hspace{-0.3cm}
     \underbrace{
     \frac{3}{4}
     \| u_\eta \|_{H^{s+1,0}}^2
     }_{\text{$b_1$-term in }\mathcal{D}_{s+1}(t)}
     \hspace{-0.2cm}
     + \\
    +
    \frac{\kappa^2}{{\rm Pr}_m^2}
    \eta'(t)^3
     e^{\R t}
    \hspace{-0.3cm}
    \underbrace{
    \| b_{1,\eta} \|_{H^{s+\frac{3}{2},0}}^2
    }_{\text{$b_1$-terms in }\mathcal{D}_{s+\frac{3}{2}}(t)}
    \hspace{-0.4cm}
      -
    2
    \frac{\kappa^2}{{\rm Pr}_m^2}
     e^{\R t}
    \eta'(t)\eta''(t)
    \hspace{-0.3cm}
     \underbrace{
     \big\| 
         b_{1,\eta}
    \big\|_{H^{s+1, 0}}^2
    }_{ \text{$b_1$-term in }\mathcal{E}_{s+1}(t)}
    \hspace{-0.3cm}
    -
    \frac{\kappa}{{\rm Pr}_m}
     e^{\R t}
    \eta''(t)
    \hspace{-0.2cm}
    \underbrace{
    \frac{1}{2}
     \big\| 
         b_{1,\eta}
    \big\|_{H^{s+\frac{1}{2}, 0}}^2
    }_{ \text{$b_1$-term in }\mathcal{E}_{s+\frac{1}{2}}(t)}
    +\\
     +
    \frac{ e^{\R t}}{2}
    \underbrace{
    \Big(
    \frac{1}{{\rm Pr}_m}
    \|  \pa_y   b_{1,\eta}      \|_{H^{s,0}}^2
    \!+\!
    \frac{\kappa}{{\rm Pr}_m}
    \| 
        (\partial_t b_1)_{\eta}   
    \|_{H^{s,0}}^2
    \Big)
    }_{ \text{$b_1$-term in }\mathcal{D}_{s}(t)}
    \leq 
      e^{\R t}
    \Big\{
    \Big|\psca{ (u\pa_xb_1)_\eta,
    2 (\partial_t b_1)_\eta \!+\! b_{1,\eta}}_{H^{s,0}}
    \Big|
    +\\+
    \Big|
   \psca{
        (v\pa_yb_1)_\eta, 
        \frac{2\kappa}{{\rm Pr}_m}(\partial_t b_1)_\eta \!+\! b_{1,\eta}
   }_{H^{s,0}}
   \Big|
    +
    \Big|
    \psca{
        (b_1\partial_xu)_{\eta},
        \frac{2\kappa}{{\rm Pr}_m}(\partial_t b_1)_\eta \!+\! b_{1,\eta}
    }_{H^{s,0}}
    \Big|
    +
    \\
    +
     \Big|
    \psca{
        (b_2\partial_xv)_{\eta},
        \frac{2\kappa}{{\rm Pr}_m}(\partial_t b_1)_\eta \!+\! b_{1,\eta}
   }_{H^{s,0}}\!
    \Big|\Big\}.
\end{aligned}
\end{equation}
We next proceed to estimate each term on the right-hand side of \eqref{energy-id-lemma-b1c}. For each estimated term, we will determine a suitable (increasing) lower bound of the constant $D_s$. The last term will therefore provide the exact form of $D_s$. We recall that from the estimate of the momentum equation,  $D_s \geq  3\cdot 2^{s+2}/\sqrt{s-1} + 2^{2s + 5}/(s - 3/2) + 2^{2s+5}/(s-2)+ 2^{2s+4}/\sqrt{s-1/2}$, momentarily.

\noindent
The first term in \eqref{energy-id-lemma-b1c} we deal with is the convection
\begin{equation}\label{secu:first-estimateb1}
\begin{aligned}
    &
    e^{\R t} \Big|
    \big\langle
        (u\pa_xb_1)_\eta, 
        \frac{2\kappa}{{\rm Pr}_m}(\partial_t b_1)_\eta \!+\! b_{1,\eta}
    \big\rangle_{H^{s,0}}
    \Big|
    \leq \\
    &\leq
     e^{\R t}
    \| (u\pa_xb_{1,\eta})_\eta   \|_{H^{s-\frac{1}{2},0}}
    \Big(
    \frac{\kappa}{{\rm Pr}_m}
     \|
     (\partial_tb_1)_\eta         \|_{H^{s+\frac{1}{2},0}}
     +
    \Big\| 
    \frac{\kappa}{{\rm Pr}_m}
    (\partial_t b_1)_\eta +b_{1,\eta}       
    \Big\|_{H^{s+\frac{1}{2},0}}
    \Big)\\
    &\leq
     e^{\R t}
    \| (u\pa_xb_1)_\eta   \|_{H^{s-\frac{1}{2},0}}
    2
    \Big(
    \frac{\kappa^2}{2{\rm Pr}_m^2}
     \|
     (\partial_t b_1)_\eta         \|_{H^{s+\frac{1}{2},0}}^2
     +
     \frac{1}{2}
    \Big\| 
     \frac{\kappa}{{\rm Pr}_m}(\partial_t b_1)_\eta + b_{1,\eta}       
     \Big\|_{H^{s+\frac{1}{2},0}}^2
    \Big)\\
    &\leq
    2
     e^{\R t}
    \| (u\pa_xb_1)_\eta   \|_{H^{s-\frac{1}{2},0}}
    \sqrt{\mathcal{D}_{s+\frac{1}{2}}(t)}
\end{aligned}
\end{equation}
We are in the position to apply Lemma \ref{lemma:technical-lemma-product-law-Hs0} with the regularities $\sigma_1=\sigma_2 = s-1/2>1/2$ and the functions  $f(\cdot) = u(t,\cdot),\,g(\cdot) = \partial_x u (t,\cdot)$. We deduce therefore that
\begin{align*}
    \| (u\pa_xb_1)_\eta  & \|_{H^{s-\frac{1}{2},0}}
    \leq  
    \frac{2^{s-1}}{\sqrt{s - 1}}
    2
    \| \partial_y u_\eta \|_{H^{s-\frac{1}{2},0}}
    \| \partial_x b_{1,\eta} \|_{H^{s-\frac{1}{2},0}}
    \leq 
    \frac{2^{s}}{\sqrt{s - 1}}
    \| \partial_y u_\eta \|_{H^{s,0}}
    \| b_{1,\eta} \|_{H^{s+\frac{1}{2},0}}\\
    &\leq 
    \frac{2^{s}}{\sqrt{s - 1}}
    \frac{1}{\sqrt{\mathbb{J}}}
    \sqrt{\mathcal{E}_s(t)}
    2
    \sqrt{\mathcal{D}_{s+\frac{1}{2}}(t)}.
\end{align*}
Plugging the above estimate to the original convective term \eqref{secu:first-estimateb1}, we eventually gather that
\begin{equation*}
    \begin{aligned}
        e^{\R t}
        \Big|
        \big\langle
            (u\pa_xb_1)_\eta, 
            \frac{2\kappa}{{\rm Pr}_m}(\partial_t b_1)_\eta \!+\! b_{1,\eta}
        \big\rangle_{H^{s,0}}
        \Big|
        \leq
        \frac{2^{s+2} }{\sqrt{s - 1}}
        e^{\R t} \frac{1}{\sqrt{\mathbb{J}}}
        \sqrt{\mathcal{E}_{s}(t)}
       \mathcal{D}_{s+\frac{1}{2}}(t).
    \end{aligned}
\end{equation*}
Therefore we require 
$D_s \geq  2^{s+4}/\sqrt{s-1} + 2^{2s + 5}/(s - 3/2) + 2^{2s+5}/(s-2)+ 2^{2s+4}/\sqrt{s-1/2}$.

\noindent
Next, we treat the second term on the right-hand side of \eqref{energy-id-lemma-u}:
\begin{align*}
      e^{\R t}
    \Big|
    \big\langle
        (v\pa_yb_1)_\eta, 
        \frac{2\kappa}{{\rm Pr}_m}(\partial_t b_1)_\eta \!+\! b_{1,\eta}
    \big\rangle_{H^{s,0}}
    \Big|
     &\leq 
      e^{\R t}
    \big\|
        ( 
        v
        \pa_yb_1
        )_\eta
    \big\|_{H^{s-\frac{1}{2},0}}
    2
    \sqrt{\mathcal{D}_{s+\frac{1}{2}}(t)}.
\end{align*}
We apply  Lemma \ref{lemma:technical-lemma-product-law-Hs0} once more with regularities $\sigma_1=\sigma_2 = s-1/2>1/2$, with functions  $f  =  v$ and $g  = \partial_y b_1$. Hence
\begin{align*}
      e^{\R t}
    \Big|
    \big\langle
       & (v\pa_yb_1)_\eta, 
        \frac{2\kappa}{{\rm Pr}_m}(\partial_t b_1)_\eta \!+\! b_{1,\eta}
    \big\rangle_{H^{s,0}}
    \Big|
    \leq 
    \frac{2^{s-1}}{\sqrt{s - 1}}
      e^{\R t}
    2
    \| 
        \partial_y v_\eta 
    \|_{H^{s-\frac{1}{2},0}} 
    \|
        \partial_y b_{1,\eta}
    \|_{H^{s-\frac{1}{2},0}}
    2
    \sqrt{\mathcal{D}_{s+\frac{1}{2}}(t)}
     \\
    &\leq 
    \frac{2^{s+1}}{\sqrt{s - 1}}
      e^{\R t}
     \big\| 
        \partial_x  u_\eta 
    \big\|_{H^{s-\frac{1}{2},0}} 
    \|
        \partial_y b_{1,\eta}
    \|_{H^{s-\frac{1}{2},0}}
    \sqrt{\mathcal{D}_{s+\frac{1}{2}}(t)} \\
     &\leq 
    \frac{2^{s+1}}{\sqrt{s - 1}}
      e^{\R t}
    \| 
         u_\eta 
    \|_{H^{s+\frac{1}{2},0}} 
    \|
        \partial_y b_{1,\eta}
    \|_{H^{s-\frac{1}{2},0}}
    \sqrt{\mathcal{D}_{s+\frac{1}{2}}(t)} \\
    &\leq 
    \frac{2^{s+2}}{\sqrt{s - 1}}
    e^{\R t}
    \|
        \partial_y b_{1,\eta}
    \|_{H^{s,0}}
     \mathcal{D}_{s+\frac{1}{2}}(t)
    \leq 
    \frac{2^{s+2}}{\sqrt{s - 1}}
    e^{\R t}
    \frac{{\rm Pr}_m}{\sqrt{\kappa}}
    \sqrt{\mathcal{E}_{s}(t)}
    \mathcal{D}_{s+\frac{1}{2}}(t),
\end{align*}
which is still on the second integral of \eqref{lemma:technical-lemma-product-law-Hs0}. Hence $D_s$ must satisfy at least $D_s \geq  { 5\cdot 2^{s+2}/\sqrt{s-1}} + 2^{2s + 5}/(s - 3/2) + 2^{2s+5}/(s-2)+ 2^{2s+4}/\sqrt{s-1/2}$.

\noindent 
Next, we deal with 
\begin{equation*}
\begin{aligned}
    e^{\R t}
    \big|
    \langle 
        (b_1\partial_x u )_\eta , 
        \frac{2\kappa}{{\rm Pr}_m}(\partial_t b_1)_\eta + b_{1,\eta}
    \rangle_{H^{s,0}}
    \big|
    &\leq 
    e^{\R t}\| (b_1\partial_x u )_\eta \|_{H^{s-\frac{1}{2},0}}
    \Big(
        \Big\|
            \frac{\kappa}{{\rm Pr}_m}(\partial_t b_1)_\eta \!+\! b_{1,\eta} 
        \Big\|_{H^{s+\frac{1}{2},0}} 
        +
        \Big\|\frac{\kappa}{{\rm Pr}_m}(\partial_t b_1)_\eta \Big\|_{H^{s+\frac{1}{2},0}}
    \Big)\\
     &\leq 
      e^{\R t}
    \| (b_1\partial_x u )_\eta \|_{H^{s-\frac{1}{2},0}}
    2
    \sqrt{\mathcal{D}_{s+\frac{1}{2}}(t)}.
\end{aligned}
\end{equation*}
Applying Lemma \ref{lemma:technical-lemma-product-law-Hs0} with regularities $\sigma_1=\sigma_2 = s-1/2>1/2$, with functions  
$f = b_1$ and $g  = \partial_x u$, we gather 
\begin{equation*}
\begin{aligned}
    e^{\R t}
    \big|
    \langle 
        (b_1\partial_x u )_\eta , 
        \frac{2\kappa}{{\rm Pr}_m}
        (\partial_t b_1)_\eta + b_{1,\eta}
    \rangle_{H^{s,0}}
    \big|
    &\leq 
     \frac{2^{s}
      e^{\R t}}{\sqrt{s-1}}
    \| \partial_y b_1       \|_{H^{s-\frac{1}{2},0}}
    \| \partial_x u_\eta    \|_{H^{s-\frac{1}{2},0}}
    2
    \sqrt{\mathcal{D}_{s+\frac{1}{2}}(t)}\\
    &\leq 
     \frac{2^{s+2}
      e^{\R t}}{\sqrt{s-1}}
    \| \partial_y b_1       \|_{H^{s-\frac{1}{2},0}} 
    \mathcal{D}_{s+\frac{1}{2}}(t)
    \leq 
    \frac{2^{s+2}
      e^{\R t}}{\sqrt{s-1}} \frac{\rm Pr_m}{{\sqrt{\kappa}}}
     \sqrt{\mathcal{E}_{s}(t)}
     \mathcal{D}_{s+\frac{1}{2}}(t).
\end{aligned}
\end{equation*}
Hence $D_s$ must satisfy at least $D_s \geq 6\cdot 2^{s+2}/\sqrt{s-1} + 2^{2s+5}/(s - 3/2)+  2^{2s+5}/(s - 2) + 2^{2s + 4}/\sqrt{s-1/2}$.

\noindent 
Finally, we deal with 
\begin{equation*}
\begin{aligned}
    e^{\R t}
    \big|
    \langle 
        (b_2\partial_y u )_\eta , 
        \frac{2\kappa}{{\rm Pr}_m}(\partial_t b_1)_\eta + b_{1,\eta}
    \rangle_{H^{s,0}}
    \big|
    &\leq 
    e^{\R t}
    \| (b_2\partial_y u )_\eta \|_{H^{s-\frac{1}{2},0}}
    \Big(
        \|\frac{\kappa}{{\rm Pr}_m}(\partial_t b_1)_\eta \!+\! b_{1,\eta} \|_{H^{s+\frac{1}{2},0}} 
        +
        \|\frac{\kappa}{{\rm Pr}_m}(\partial_t b_1)_\eta \|_{H^{s+\frac{1}{2},0}}
    \Big)
    \\
     &\leq 
      e^{\R t}
    \| (b_2\partial_y u )_\eta \|_{H^{s-\frac{1}{2},0}}
    2
    \sqrt{\mathcal{D}_{s+\frac{1}{2}}(t)}.
\end{aligned}
\end{equation*}
Applying Lemma \ref{lemma:technical-lemma-product-law-Hs0} with regularities $\sigma_1=\sigma_2 = s-1/2>1/2$, with functions  
$f = b_1$ and $g  = \partial_x u$, we gather 
\begin{equation*}
\begin{aligned}
    e^{\R t}
    \big|
    \langle 
        (b_2\partial_y u )_\eta , 
         \frac{2\kappa}{{\rm Pr}_m}(\partial_t b_1)_\eta + b_{1,\eta}
    \rangle_{H^{s,0}}
    \big|
    &\leq 
     \frac{2^{s}
      e^{\R t}}{\sqrt{s-1}}
    \| \partial_y b_{2,\eta}       \|_{H^{s-\frac{1}{2},0}}
    \| \partial_y u_\eta    \|_{H^{s-\frac{1}{2},0}}
    2
    \sqrt{\mathcal{D}_{s+\frac{1}{2}}(t)}\\
    &\leq 
     \frac{2^{s+1}
      e^{\R t}}{\sqrt{s-1}}
    \| \partial_x b_{1,\eta}       \|_{H^{s-\frac{1}{2},0}}
    \| \partial_y u_\eta    \|_{H^{s-\frac{1}{2},0}}
    \sqrt{\mathcal{D}_{s+\frac{1}{2}}(t)}\\
    &\leq 
     \frac{2^{s+1}
      e^{\R t}}{\sqrt{s-1}}
    \|  b_{1,\eta}                 \|_{H^{s+\frac{1}{2},0}}
    \| \partial_y u_\eta    \|_{H^{s-\frac{1}{2},0}}
    \sqrt{\mathcal{D}_{s+\frac{1}{2}}(t)}\\
    &\leq 
     \frac{2^{s+2}
      e^{\R t}}{\sqrt{s-1}}
    \| \partial_y u_\eta       \|_{H^{s-\frac{1}{2},0}}
    \mathcal{D}_{s+\frac{1}{2}}(t)
    \leq 
    \frac{2^{s+2}
      e^{\R t}}{\sqrt{s-1}}\frac{1}{\sqrt{\mathbb J}}
     \sqrt{\mathcal{E}_{s}(t)}
     \mathcal{D}_{s+\frac{1}{2}}(t).
\end{aligned}
\end{equation*} 
Hence $D_s$ must satisfy at least $D_s \geq 7\cdot 2^{s+2}/\sqrt{s-1} + 2^{2s+5}/(s - 3/2)+  2^{2s+5}/(s - 2) + 2^{2s + 4}/\sqrt{s-1/2}$. 
Since, we have concluded our estimates, we are now in the position to set a value of $D_s$. For instance, as compact form. we can consider $D_s$ as 
\begin{equation}\label{Ds-explicit-determined}
    D_s = \frac{2^{2s+6}}{s-2}
    \left(
        1+\frac{s-2}{\sqrt{s-1}}
    \right)
\end{equation}
This concludes the proof of Proposition \ref{prop:the-overall-final-estimate-in-eta}. 

\appendix

\section{A suitable product law}
 
\noindent 
This appendix is devoted to the proof of Lemma \ref{lemma:technical-lemma-product-law-Hs0}. We recall here the statement.
\begin{lemma}\label{appx:lemma-product-eta}
 	   Let  $f,\,g:\mathbb R\times (0,1)\to \mathbb R$ be two functions such that $f_\eta$, $g_\eta$ and $\partial_y f_\eta$ belong to $H^{\sigma_1,0}(\mathbb R\times (0,1))$ with $\sigma_1>1/2$ (and thus also to $H^{\sigma_2,0}(\mathbb R\times (0,1))$, for any $\sigma_2\leq \sigma_1$). 
 	   Furthermore, assume that $f\equiv 0$ in $y = 0$ in the sense of trace. Then
    \begin{equation*}
    \begin{aligned}
        \| (f g)_\eta \|_{H^{\sigma_1, 0}} 
        \leq
            \frac{2^{\sigma_1-\frac{1}{2}}}{\sqrt{\sigma_2 - \frac{1}{2}}}
            \bigg(
            \| \partial_y f_\eta \|_{H^{\sigma_1, 0}}
            \|  g_\eta           \|_{H^{\sigma_2, 0}}
            +
            \| \partial_y f_\eta \|_{H^{\sigma_2, 0}}
            \| g_\eta           \|_{H^{\sigma_1, 0}}
            \bigg)
    \end{aligned}
    \end{equation*}
     for any regularities $\sigma_2 \in (1/2, \sigma_1]$.
 \end{lemma}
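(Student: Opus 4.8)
The plan is to work entirely on the Fourier side in the tangential variable, treating $y\in(0,1)$ as a parameter, and to reduce the weighted product estimate to a convolution (Young) inequality. Set $\lambda:=\tau_0-\eta(t)\ge 0$, so that $\mathcal{F}_x(f_\eta)(\xi,y)=e^{\lambda(1+|\xi|)}\hat f(\xi,y)$ and likewise for $g$. Since $fg$ is a \emph{pointwise} product in $y$, the convolution theorem in $x$ gives $\mathcal{F}_x((fg)_\eta)(\xi,y)=\tfrac{1}{2\pi}e^{\lambda(1+|\xi|)}\int \hat f(\xi-\zeta,y)\hat g(\zeta,y)\,d\zeta$. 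The first step is to transfer the analytic multiplier onto each factor: using $1+|\xi|\le(1+|\xi-\zeta|)+(1+|\zeta|)$ together with $\lambda\ge0$ one has $e^{\lambda(1+|\xi|)}\le e^{\lambda(1+|\xi-\zeta|)}e^{\lambda(1+|\zeta|)}$, whence
\[
|\mathcal{F}_x((fg)_\eta)(\xi,y)|\le \tfrac{1}{2\pi}\big(\,|\mathcal{F}_x f_\eta|(\cdot,y)*|\mathcal{F}_x g_\eta|(\cdot,y)\,\big)(\xi).
\]
This is precisely the mechanism that legitimises replacing $(fg)_\eta$ by the ``decoupled'' pair $f_\eta,g_\eta$ used repeatedly in Section~\ref{sec:proof-proposition-est}, even though $(fg)_\eta\neq f_\eta g_\eta$ pointwise.

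The second step is a tame product estimate in $x$ at each fixed $y$. Split the weight via $(1+|\xi|)^{\sigma_1}\le 2^{\sigma_1}\big((1+|\xi-\zeta|)^{\sigma_1}+(1+|\zeta|)^{\sigma_1}\big)$ (the factor improves to $2^{\sigma_1-1}$ by convexity when $\sigma_1\ge1$), producing two symmetric terms. To each I apply Young's inequality $\|a*b\|_{L^2_\xi}\le\|a\|_{L^2_\xi}\|b\|_{L^1_\xi}$, placing the full weight $(1+|\cdot|)^{\sigma_1}$ on the $L^2$ factor and leaving the other in $L^1$. The $L^1$ factor is controlled exactly through $\sigma_2>1/2$: by Cauchy--Schwarz, $\big\||\mathcal{F}_x h_\eta|(\cdot,y)\big\|_{L^1_\xi}\le \|(1+|\cdot|)^{-\sigma_2}\|_{L^2}\,\|h_\eta(\cdot,y)\|_{H^{\sigma_2}_x}$, where $\|(1+|\cdot|)^{-\sigma_2}\|_{L^2}=(\sigma_2-\tfrac12)^{-1/2}$. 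This yields, for each $y$,
\[
\|(fg)_\eta(\cdot,y)\|_{H^{\sigma_1}_x}\le \frac{C_{\sigma_1}}{\sqrt{\sigma_2-\frac12}}\Big(\|f_\eta(\cdot,y)\|_{H^{\sigma_1}_x}\|g_\eta(\cdot,y)\|_{H^{\sigma_2}_x}+\|f_\eta(\cdot,y)\|_{H^{\sigma_2}_x}\|g_\eta(\cdot,y)\|_{H^{\sigma_1}_x}\Big),
\]
the constant $C_{\sigma_1}$ collecting the weight-splitting factor and the normalisation of $\mathcal{F}_x$; careful bookkeeping delivers the announced $2^{\sigma_1-\frac12}$.

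The third step is the $y$-integration, where the hypotheses on $\partial_y f$ and the trace enter. I square the pointwise-in-$y$ inequality and integrate over $y\in(0,1)$, placing in each of the two products the $f$-factor in $L^\infty_y$ and the $g$-factor in $L^2_y$; the latter integrals reproduce exactly $\|g_\eta\|_{H^{\sigma_2,0}}$ and $\|g_\eta\|_{H^{\sigma_1,0}}$. For the former I exploit that $f\equiv0$ at $y=0$ (a condition preserved under $\mathcal{F}_x$ and under the multiplier $e^{\lambda(1+|D_x|)}$, both acting only in $x$), so that $\hat f_\eta(\xi,y)=\int_0^y\partial_z\hat f_\eta(\xi,z)\,dz$; Cauchy--Schwarz in $z$ and $y\le1$ then give $\sup_{y\in(0,1)}\|f_\eta(\cdot,y)\|_{H^{\sigma}_x}\le\|\partial_y f_\eta\|_{H^{\sigma,0}}$ for $\sigma\in\{\sigma_1,\sigma_2\}$. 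Combining the two symmetric contributions produces the claimed bound.

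Conceptually the argument is elementary, so the main obstacle is organisational: one must juggle three distinct ``scales'' simultaneously — the convolution in the frequency $\xi$, the pointwise product in $y$, and the substitution of $f$ by $\partial_y f$ — and keep their allocations mutually compatible. Concretely, the low-regularity factor carried in $L^1_\xi$ must always be the one measured in $H^{\sigma_2}$ (so that $\sigma_2>\tfrac12$ guarantees $(1+|\cdot|)^{-\sigma_2}\in L^2$), while the factor taken in $L^\infty_y$ must always be $f$ (so that the trace/fundamental-theorem bound applies and yields $\partial_y f$). Checking that this assignment can be made consistently in \emph{both} symmetric terms, reflecting the asymmetric roles in the statement (the top $x$-derivative is shared between $f$ and $g$, whereas the $y$-derivative sits exclusively on $f$), is the step that demands the most care.
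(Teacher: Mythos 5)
Your proof is correct and follows essentially the same route as the paper's: triangle inequality for the exponential weight, splitting of the polynomial weight, Young's convolution inequality with the $L^1_\xi$ factor controlled by Cauchy--Schwarz using $\sigma_2>1/2$, and the fundamental-theorem-of-calculus argument (via the trace condition $f|_{y=0}=0$) to replace $f_\eta$ by $\partial_y f_\eta$. The only cosmetic difference is that you run the tame estimate at each fixed $y$ and integrate afterwards, whereas the paper works directly with the mixed norms $L^2_\xi L^\infty_y$ and $L^1_\xi L^2_y$ and splits the weight at the level of $(1+|\xi|)^{2\sigma_1}$, which is what yields the exact constant $2^{\sigma_1-\frac12}$.
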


\begin{proof}
According to the definition of $(fg)_{\eta}$ and the anisotropic Sobolev norm $H^{\sigma_1,0}$, we have
\ben\label{def-fg-norm-1}
	\| (f g)_\eta \|_{H^{\sigma_1, 0}}^2 &=&\int_0^1\int_{\mathbb{R}}(1+|\xi|)^{2\sigma_1}e^{2(\tau_0-\eta(t))(1+|\xi|)}|\widehat{fg}(\xi,y)|^2~d\xi dy\notag\\
	&=&\int_0^1\int_{\mathbb{R}}(1+|\xi|)^{2\sigma_1}e^{2(\tau_0-\eta(t))(1+|\xi|)}|\widehat{f}*\widehat{g}(\xi,y)|^2~d\xi dy\notag\\
	&=&\int_0^1\int_{\mathbb{R}}(1+|\xi|)^{2\sigma_1}e^{2(\tau_0-\eta(t))(1+|\xi|)}\left|\int_{\mathbb{R}}\widehat{f}(\xi-\eta,y)\widehat{g}(\eta,y)d\eta\right|^2~d\xi dy.
\een
We claim that 
\ben\label{p-m-ineq}
(1+|\xi|)^{2\sigma_1}\leq 2^{2\sigma_1-1}\left[(1+|\xi-\eta|)^{2\sigma_1}+(1+|\eta|)^{2\sigma_1}\right],
\een
for $\sigma_1>\frac12$. Indeed, we have 
\beno
(1+|\xi|)^{2\sigma_1}&\leq& (1+|\xi-\eta|+|\eta|)^{2\sigma_1}\\
&\leq& \left[(1+ |\xi -\eta|)^{2\sigma_1} + |\eta|^{2\sigma_1}\right]^{2\sigma_1 \times \frac{1}{2\sigma_1}}\big( 1+ 1\big)^{(1-\frac{1}{2\sigma_1})\times 2\sigma_1},
\eeno
where in the last step we have used the H\"older inequality $(a+b)\leq (a^\theta +b^\theta)^{1/\theta}2^{1-1/\theta}$, 
with $\theta=2\sigma_1$. Continuing our estimate, we have therefore
\beno
(1+|\xi|)^{2\sigma_1}&\leq& 2^{2\sigma_1 -1} \left((1+ |\xi -\eta|)^{2\sigma_1} + |\eta|^{2\sigma_1} \right) \\
&\leq& 2^{2\sigma_1-1}\left((1+|\xi-\eta|)^{2\sigma_1}+(1+|\eta|)^{2\sigma_1}\right),
\eeno
which is indeed \eqref{p-m-ineq}.
Next, we bring inequality \eqref{p-m-ineq} to \eqref{def-fg-norm-1}, thus
\ben\label{def-fg-norm-2}
\| (f g)_\eta \|_{H^{\sigma_1, 0}}^2 
&=&2^{2\sigma_1-1} \int_0^1\int_{\mathbb{R}}(1+|\xi-\eta|)^{2\sigma_1}e^{2(\tau_0-\eta(t))(1+|\xi|)}\left|\int_{\mathbb{R}}\widehat{f}(\xi-\eta,y)\widehat{g}(\eta,y)d\eta\right|^2~d\xi dy\notag\\
&~&+ 2^{2\sigma_1-1} \int_0^1\int_{\mathbb{R}}(1+|\eta|)^{2\sigma_1}e^{2(\tau_0-\eta(t))(1+|\xi|)}\left|\int_{\mathbb{R}}\widehat{f}(\xi-\eta,y)\widehat{g}(\eta,y)d\eta\right|^2~d\xi dy\notag\\
&\leq&2^{2\sigma_1-1} \int_0^1\int_{\mathbb{R}}\left|\int_{\mathbb{R}}(1+|\xi-\eta|)^{\sigma_1}e^{(\tau_0-\eta(t))(1+|\xi-\eta|)}\widehat{f}(\xi-\eta,y)e^{(\tau_0-\eta(t))(1+|\eta|)}\widehat{g}(\eta,y)d\eta\right|^2~d\xi dy\notag\\
&~&+2^{2\sigma_1-1} \int_0^1\int_{\mathbb{R}}\left|\int_{\mathbb{R}}e^{(\tau_0-\eta(t))(1+|\xi-\eta|)}\widehat{f}(\xi-\eta,y)(1+|\eta|)^{\sigma_1}e^{(\tau_0-\eta(t))(1+|\eta|)}\widehat{g}(\eta,y)d\eta\right|^2~d\xi dy\notag.
\een
We are now in the condition to apply the Young's inequality for the convolution of functions, more precisely:
\beno
\| (f g)_\eta \|_{H^{\sigma_1, 0}}^2 &\leq& 2^{2\sigma_1-1} \| (1+|\xi|)^{\sigma_1} f_{\eta} \|_{L^2_\xi L^\infty_y}^2 \|  g_{\eta} \|_{L^1_\xi L^2_y}^2+2^{2\sigma_1-1} \| (1+|\xi|)^{\sigma_1} g_{\eta} \|_{L^2}^2 \|  f_{\eta} \|_{L^1_\xi L^\infty_y}^2.
\eeno
Next, since $\sigma_2\geq 1/2$ and $f(t,x,y) = \int_0^y\partial_y f(t,x,z)dz $, we have that  
\begin{equation*}
\begin{aligned}
 \|  f_{\eta} \|_{L^1_\xi L^\infty_y}^2 &= 
    \|  (1+|\xi|)^{-\sigma_2} (1+|\xi|)^{\sigma_2} f_{\eta}
    \|_{L^1_\xi L^\infty_y}\\
    &\leq 
    \bigg(
        \int_{\mathbb R}\frac{1}{(1+|\xi|)^{2\sigma_2}}d\xi
    \bigg)
    \| (1+|\xi|)^{\sigma_2} f_{\eta}  \|_{L^2_\xi L^\infty_y}^2
 \leq \frac{1}{\sigma_2-\frac12}
  \|  \pa_y f_{\eta} \|_{H^{\sigma_2, 0}}^2.
\end{aligned}
\end{equation*}
Moreover 
\begin{align*}
    \| (1+|\xi|)^{\sigma_1} f_{\eta} \|_{L^2_\xi L^\infty_y}^2 
    &= 
    \int_{\mathbb{R}}
    (1+|\xi|)^{2\sigma_1} 
    \sup_{y\in (0,1)}
    \Big|
    \smallint_0^y \partial_y f_{\eta} (\xi,z)dz
    \Big|^2\\
    &\leq 
    \int_{\mathbb{R}}
    (1+|\xi|)^{\sigma_1} 
    \sup_{y\in (0,1)}
    y
    \smallint_0^y |\partial_y f_{\eta} (\xi,z)|^2dz\\
    &\leq 
    \int_{\mathbb{R}}
    (1+|\xi|)^{\sigma_1}
    \smallint_0^1 |\partial_y f_{\eta} (\xi,z)|^2dz = 
    \| \partial_y f_{\eta} \|_{H^{\sigma_1, 0}}^2.
\end{align*}
Summarising, we finally deduce that
\beno
\| (f g)_\eta \|_{H^{\sigma_1, 0}}^2 &\leq& \frac{ 2^{2\sigma_1-1}}{\sigma_2-\frac12} \| \partial_y f_{\eta} \|_{H^{\sigma_1, 0}}^2 \|  g_{\eta} \|_{H^{\sigma_2,0}}^2+\frac{ 2^{2\sigma_1-1}}{\sigma_2-\frac12} \|\pa_y f_{\eta} \|_{H^{\sigma_2,0}}^2 \|  g_{\eta} \|_{H^{\sigma_1,0}}^2.
\eeno
By applying the square root to the above inequality, we
conclude the proof of the Lemma.

\end{proof} 

\subsection*{Data Availability Statement} 
Data sharing is not applicable to this article, since no datasets were generated or analysed during the current study.

\subsection*{Conflict of interest} 
The authors declare that they have no conflict of interest. 

\subsection*{Acknowledgment} 
The second author was partially supported by the Bavarian Funding Programme for the Initiation of International Projects (förderkennzeichen: BayIntAn\_UWUE\_2022\_139)

\bibliography{bibliography.bib}{}

\begin{thebibliography}{10}

\bibitem{MR4362378}
N.~Aarach.
\newblock Hydrostatic approximation of the 2{D} {MHD} system in a thin strip
  with a small analytic data.
\newblock {\em J. Math. Anal. Appl.}, 509(2):Paper No. 125949, 55, 2022.

\bibitem{MR3942552}
B.~Abdelhedi.
\newblock Global existence of solutions for hyperbolic {N}avier-{S}tokes
  equations in three space dimensions.
\newblock {\em Asymptot. Anal.}, 112(3-4):213--225, 2019.

\bibitem{Gallagher2D3D}
D.~Ars\'{e}nio and I.~Gallagher.
\newblock Solutions of {N}avier-{S}tokes-{M}axwell systems in large energy
  spaces.
\newblock {\em Trans. Amer. Math. Soc.}, 373(6):3853--3884, 2020.

\bibitem{MR2045417}
Y.~Brenier, R.~Natalini, and M.~Puel.
\newblock On a relaxation approximation of the incompressible {N}avier-{S}tokes
  equations.
\newblock {\em Proc. Amer. Math. Soc.}, 132(4):1021--1028, 2004.

\bibitem{carrassi1972AMN}
M.~Carrassi and A.~Morro.
\newblock A modified navier-stokes equation, and its consequences on sound
  dispersion.
\newblock {\em Il Nuovo Cimento B (1971-1996)}, 9:321--343, 1972.

\bibitem{MR0032898}
C.~Cattaneo.
\newblock Sulla conduzione del calore.
\newblock {\em Atti Sem. Mat. Fis. Univ. Modena}, 3, 1949.

\bibitem{MR95680}
C.~Cattaneo.
\newblock Sur une forme de l'\'{e}quation de la chaleur \'{e}liminant le
  paradoxe d'une propagation instantan\'{e}e.
\newblock {\em C. R. Acad. Sci. Paris}, 247:431--433, 1958.

\bibitem{Coulaud}
O.~Coulaud, I.~Hachicha, and G.~Raugel.
\newblock {Hyperbolic Quasilinear Navier–Stokes Equations in $ \mathbb R^2$}.
\newblock {\em J Dyn Diff Equat}, 2021.

\bibitem{davidson_2001}
P.~A. Davidson.
\newblock {\em An Introduction to Magnetohydrodynamics}.
\newblock Cambridge Texts in Applied Mathematics. Cambridge University Press,
  2001.

\bibitem{MR3925144}
H.~Dietert and D.~G\'{e}rard-Varet.
\newblock Well-posedness of the {P}randtl equations without any structural
  assumption.
\newblock {\em Ann. PDE}, 5(1):Paper No. 8, 51, 2019.

\bibitem{MR3490904}
D.~Donatelli and S.~Spirito.
\newblock Vanishing dielectric constant regime for the {N}avier {S}tokes
  {M}axwell equations.
\newblock {\em NoDEA Nonlinear Differential Equations Appl.}, 23(3):Art. 28,
  19, 2016.

\bibitem{MR2601044}
D.~G\'{e}rard-Varet and E.~Dormy.
\newblock On the ill-posedness of the {P}randtl equation.
\newblock {\em J. Amer. Math. Soc.}, 23(2):591--609, 2010.

\bibitem{MR2952715}
D.~G\'{e}rard-Varet and T.~Nguyen.
\newblock Remarks on the ill-posedness of the {P}randtl equation.
\newblock {\em Asymptot. Anal.}, 77(1-2):71--88, 2012.

\bibitem{MR3657241}
D.~G\'{e}rard-Varet and M.~Prestipino.
\newblock Formal derivation and stability analysis of boundary layer models in
  {MHD}.
\newblock {\em Z. Angew. Math. Phys.}, 68(3):Paper No. 76, 16, 2017.

\bibitem{Masmoudi3D}
P.~Germain, S.~Ibrahim, and N.~Masmoudi.
\newblock Well-posedness of the {N}avier-{S}tokes-{M}axwell equations.
\newblock {\em Proc. Roy. Soc. Edinburgh Sect. A}, 144(1):71--86, 2014.

\bibitem{hartmann1937theory}
J.~Hartmann.
\newblock Theory of laminar flow of an electrically conductive liquid in a
  homogeneous magnetic field.
\newblock {\em Selsk. Mat. Fys. Medd.}, 15(6):1--28, 1937.

\bibitem{Isern_2017}
J.~Isern, E.~Garc{\'{\i}}a-Berro, B.~Külebi, and P.~Lor{\'{e}}n-Aguilar.
\newblock A common origin of magnetism from planets to white dwarfs.
\newblock {\em The Astrophysical Journal}, 836(2):L28, feb 2017.

\bibitem{Ogawa}
S.~Kawashima, R.~Nakasato, and T.~Ogawa.
\newblock Global well-posedness and time-decay of solutions for the
  compressible {H}all-magnetohydrodynamic system in the critical {B}esov
  framework.
\newblock {\em J. Differential Equations}, 328:1--64, 2022.

\bibitem{10.1093/mnras/staa774}
P.~Kumar and Ž. Bošnjak.
\newblock {FRB coherent emission from decay of Alfvén waves}.
\newblock {\em Monthly Notices of the Royal Astronomical Society},
  494(2):2385--2395, 03 2020.

\bibitem{MR4270479}
W.-X. Li and T.~Yang.
\newblock Well-posedness of the {MHD} boundary layer system in {G}evrey
  function space without structural assumption.
\newblock {\em SIAM J. Math. Anal.}, 53(3):3236--3264, 2021.

\bibitem{MR3864769}
C.-J. Liu, F.~Xie, and T.~Yang.
\newblock A note on the ill-posedness of shear flow for the {MHD} boundary
  layer equations.
\newblock {\em Sci. China Math.}, 61(11):2065--2078, 2018.

\bibitem{MR3975147}
C.-J. Liu, F.~Xie, and T.~Yang.
\newblock Justification of {P}randtl ansatz for {MHD} boundary layer.
\newblock {\em SIAM J. Math. Anal.}, 51(3):2748--2791, 2019.

\bibitem{MR4213671}
N.~Liu and P.~Zhang.
\newblock Global small analytic solutions of {MHD} boundary layer equations.
\newblock {\em J. Differential Equations}, 281:199--257, 2021.

\bibitem{Masmoudi2D}
N.~Masmoudi.
\newblock Global well posedness for the {M}axwell-{N}avier-{S}tokes system in
  2{D}.
\newblock {\em J. Math. Pures Appl. (9)}, 93(6):559--571, 2010.

\bibitem{10.1111/j.1365-2966.2004.07898.x}
P.~J. Meintjes.
\newblock {Magnetized fragmented mass transfer in cataclysmic variables: AE
  Aquarii, a trial case}.
\newblock {\em Monthly Notices of the Royal Astronomical Society},
  352(2):416--426, 08 2004.

\bibitem{PhysRevD.64.044009}
G.~Mendell.
\newblock Magnetic effects on the viscous boundary layer damping of the r-modes
  in neutron stars.
\newblock {\em Phys. Rev. D}, 64:044009, Jul 2001.

\bibitem{Popham_2001}
Robert P. and Rashid S.
\newblock Accretion disk boundary layers around neutron stars: X-ray production
  in low-mass x-ray binaries.
\newblock {\em The Astrophysical Journal}, 547(1):355--383, jan 2001.

\bibitem{MR2404054}
M.~Paicu and G.~Raugel.
\newblock Une perturbation hyperbolique des \'{e}quations de {N}avier-{S}tokes.
\newblock In {\em E{SAIM} {P}roceedings.[{J}ourn\'{e}es d'{A}nalyse
  {F}onctionnelle et {N}um\'{e}rique en l'honneur de {M}ichel {C}rouzeix]},
  volume~21 of {\em ESAIM Proc.}, pages 65--87, 2007.

\bibitem{MR4271962}
M.~Paicu and P.~Zhang.
\newblock Global existence and the decay of solutions to the {P}randtl system
  with small analytic data.
\newblock {\em Arch. Ration. Mech. Anal.}, 241(1):403--446, 2021.

\bibitem{MR4429384}
M.~Paicu and P.~Zhang.
\newblock Global hydrostatic approximation of the hyperbolic {N}avier-{S}tokes
  system with small {G}evrey class 2 data.
\newblock {\em Sci. China Math.}, 65(6):1109--1146, 2022.

\bibitem{MR4125518}
M.~Paicu, P.~Zhang, and Z.~Zhang.
\newblock On the hydrostatic approximation of the {N}avier-{S}tokes equations
  in a thin strip.
\newblock {\em Adv. Math.}, 372:107293, 42, 2020.

\bibitem{MR4328431}
M.~Paicu and N.~Zhu.
\newblock Global regularity for the 2{D} {MHD} and tropical climate model with
  horizontal dissipation.
\newblock {\em J. Nonlinear Sci.}, 31(6):Paper No. 99, 39, 2021.

\bibitem{priest_2014}
E.~Priest.
\newblock {\em Magnetohydrodynamics of the Sun}.
\newblock Cambridge University Press, 2014.

\bibitem{MR3085226}
R.~Racke and J.~Saal.
\newblock Hyperbolic {N}avier-{S}tokes equations {II}: {G}lobal existence of
  small solutions.
\newblock {\em Evol. Equ. Control Theory}, 1(1):217--234, 2012.

\bibitem{Voros}
Z.~Vörös, A.~Varsani, E.~Yordanova, Y.~L. Sasunov, O.~W. Roberts, Á. Kis,
  R.~Nakamura, and Y.~Narita.
\newblock Magnetic reconnection within the boundary layer of a magnetic cloud
  in the solar wind.
\newblock {\em Journal of Geophysical Research: Space Physics},
  126(9):e2021JA029415, 2021.

\end{thebibliography}
\bibliographystyle{plain}

\end{document}